\documentclass[reqno]{amsart}
\usepackage{amsmath,amssymb,amsfonts}
\usepackage{esint, cancel}
\usepackage{amsthm}
\usepackage{bbm}
\usepackage{todonotes}
\usepackage{mathrsfs}
\usepackage{geometry}
\usepackage{verbatim}
\usepackage{enumerate}
\usepackage{graphicx}
\usepackage{color}
\usepackage[utf8]{inputenc}
\usepackage[T1]{fontenc}
\usepackage{lmodern}
\usepackage[colorlinks=true,urlcolor=blue, citecolor=red,linkcolor=blue,
linktocpage,pdfpagelabels, bookmarksnumbered,bookmarksopen]{hyperref}
\usepackage[hyperpageref]{backref}
\usepackage{mathtools}
\mathtoolsset{showonlyrefs=true}

\newtheorem{theorem}{Theorem}[section]
\newtheorem{lemma}[theorem]{Lemma}

\newtheorem{corollary}[theorem]{Corollary}

\theoremstyle{definition}
\newtheorem{remark}[theorem]{Remark}

\newtheorem{definition}[theorem]{Definition}

\newtheorem*{ack}{Acknowledgments}

\newcommand{\R}{\mathbb{R}}
\newcommand{\E}{\mathcal{E}}

\numberwithin{equation}{section}

\title[Lane-Emden equation with nonlocal Neumann conditions]{Existence and non-existence results for a fractional Lane-Emden equation with nonlocal Neumann conditions}
\author[Cinti]{Eleonora Cinti}
\address[E.\ Cinti]{Dipartimento di Matematica
	\newline\indent
	Alma Mater Studiorum  Universit\`a di Bologna
	\newline\indent
	piazza di Porta San Donato, 5
	40126 Bologna, Italy}
\email{eleonora.cinti5@unibo.it}
\author[Talluri]{Matteo Talluri}
\address[M.\ Talluri]{Dipartimento di Matematica
	\newline\indent
	Alma Mater Studiorum  Universit\`a di Bologna
	\newline\indent
	piazza di Porta San Donato, 5
	40126 Bologna, Italy}
\email{matteo.talluri@unibo.it}
\author[Weth]{Tobias Weth}
\address[T.\ Weth]{Institute f\"ur Mathematics
	\newline\indent
	Goethe-Universit\"at Frankfurt
	\newline\indent
	Robert-Mayer-Strasse 10, 60629, Frankfurt am Main, Germany}
\email{weth@math.uni-frankfurt.de}

\begin{document}
\raggedbottom

    \begin{abstract}
    We consider the fractional Lane-Emden equation with a nonlocal Neumann condition in a half-space. We establish the existence of non-constant solutions for the critical problem  in any dimension. On the other hand, we show that, in dimension $n=1$, the subcritical problem admits only the trivial solution. This result follows from a new Pohozaev-type identity, which we obtain by using suitable decay estimates for the solutions.
    \end{abstract}
    \subjclass[2020]{35R11, 35B53, 35B33}
    \keywords{Lane Emden equations, Fractional Laplacian, Nonlocal Neumann boundary conditions}
\maketitle
\section{Introduction}

For $s\in(0,1)$ and $1< p\le  \frac{n+2s}{n-2s}$, we consider the following nonlocal Neumann problem 
\begin{equation}\label{P}
\begin{cases}
(-\Delta)^s u=u^{p}\quad&\mbox{in }\R^{n}_+,\\
u\ge 0\quad&\mbox{in }\R^{n}_+,\\
\mathcal N_s u=0\quad&\mbox{in }\mathbb R^n_-,
\end{cases}
\end{equation}
where $\R^{n}_+:=\{(x_1,\cdots,x_{n})\in \R^{n}\,|\, x_{n}>0\}$ and $\R^{n}_-:=\{(x_1,\cdots,x_{n})\in \R^{n}\,|\, x_{n}<0\}$ denote, respectively, the positive and negative (open) half-spaces. Here  $(-\Delta)^s$ denotes the fractional Laplacian
\begin{equation}\label{FL}
(-\Delta)^su(x):=c_{n,s}\,\mathrm{PV}\int_{\mathbb R^n}\frac{u(x)-u(y)}{|x-y|^{n+2s}}dy,
\end{equation}
where $c_{n,s}$ is a normalization constant, and $\mathcal N_s$ is the following nonlocal normal derivative
\begin{equation}\label{Neu}
\mathcal N_s u(x):=c_{n,s}\int_{\R^{n}_+}\frac{u(x)-u(y)}{|x-y|^{n+2s}}dy\quad\mbox{for all }x\in\mathbb R^n_-,
\end{equation}
first introduced in \cite{DRV}. Observe that such a nonlocal Neumann condition is the natural one so that problem \eqref{P}  has a variational structure.
{Indeed, if $u,v: \R^n\to \R$ are bounded $C^2$-functions,} then the following integration by parts formula holds true for any open subset $\Omega \subset \R^n$, where $\Omega^c=\R^n\setminus \Omega$ (see \cite[Lemma 3.3]{DRV}):
\begin{equation}\label{int-by-parts}
\begin{aligned}
\frac{c_{n,s}}{2}\iint_{\R^{2n}\setminus(\Omega^c)^2}\frac{(u(x)-u(y))(v(x)-v(y))}{|x-y|^{n+2s}}&\,dx\,dy
=\int_\Omega v\,(-\Delta)^su\,dx + \int_{\Omega^c}v\,\mathcal N_s u\,dx.
\end{aligned}
\end{equation}

 The aim of this paper is twofold. On one hand, we show that
problem \eqref{P} admits at least one non-trivial solution in the critical case $p=\frac{n+2s}{n-2s}$. On the other hand, in dimension $n=1$ and for subcritical powers $1< p<\frac{1+2s}{1-2s}$, we establish a Liouville-type result, which states that any bounded solution is necessarily identically zero.
{In order to present our main results in detail}, let us introduce some notation.

For every $u:\R^{n}\to \R$ measurable, $s\in (0,1)$, and $\Gamma\subseteq \R^{2n}$, we define \[
    \E(u\:,\:\Gamma)=\iint_{\Gamma}\frac{\left(u(x)-u(y)\right)^2}{|x-y|^{n+2s}}dxdy
    .\]
   For $n>2s$ 
   we set $T(\R^{n}_+)=\R^{2n}\setminus \left(\R^{n}_-\right)^2$ and we define the space $H^s_{\R^{n}_+}$ as 
\[
    H^s_{\R^{n}_+}=\left\{u:\R^n\to\R\: \text{measurable}\::\:u\in L^{2^*_s}(\R^{n}_+)\text{ and }\E(u\:,\:T(\R^{n}_+))<+\infty\right\},
    \]where $2^*_s=\frac{2n}{n-2s}$ denotes the critical Sobolev exponent.
    We endow such a space with the norm \[
    \|u\|_{H^s_{\R^{n}_+}}=\|u\|_{L^{2^*_s}(\R^{n}_+)}+\E\left(u\:,\:T(\R^{n}_+)\right)^\frac{1}{2}.
    \]
The space $H^s_{\R^{n}_+}$ is the natural energy space associated to problem \eqref{P}. We can now give the notion of weak solution for our problem.
{\begin{definition}
    We say that a function $u \in H^s_{\R^{n}_+}$ is a weak solution to \eqref{P} if for any $v \in H^s_{\R^{n}_+}$ one has
    \[\iint_{T(\R^n_+)}\frac{(u(x)-u(y))(v(x)-v(y))}{|x-y|^{n+2s}}\,dx\,dy=\int_{\R^{n}_+}u^{p}v\,dx.\]
\end{definition}}

We can now state our first result.
 \begin{theorem}\label{existence-critical}Let $s\in(0,1)$ and $n>2s$. Then, there exists {a non-constant weak solution $u\in H^s_{\R^n_+}$} to \begin{equation}\label{P-critical}
    \begin{cases}
        (-\Delta)^su=u^{\frac{n+2s}{n-2s}}\quad&\text{ in }\R^n_+,\\
        u\ge 0&\text{ in }\R^n_+,\\
        \mathcal{N}_su=0\quad&\text{ in }\R^n_-.
    \end{cases}
    \end{equation}
\end{theorem}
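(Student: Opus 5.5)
We obtain the solution variationally, as a minimizer of a Sobolev-type quotient on the energy space, in the spirit of Brezis--Nirenberg and Lions. Set
\[
S_+ := \inf\left\{ \frac{\tfrac12\E(u, T(\R^n_+))}{\|u\|_{L^{2^*_s}(\R^n_+)}^2} \;:\; u\in H^s_{\R^n_+},\ u|_{\R^n_+}\not\equiv 0\right\}.
\]
If $S_+$ is attained by some $u$, then $|u|$ is also a minimizer, since $\big||u(x)|-|u(y)|\big|\le |u(x)-u(y)|$. We may therefore take $u\ge0$. The Euler--Lagrange equation, together with the dilation invariance of the quotient and the $(2^*_s-1)$-homogeneity of the right-hand side, gives after rescaling a weak solution of \eqref{P-critical}. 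This solution is non-constant: a nonzero constant is not in $L^{2^*_s}(\R^n_+)$, and the zero function is excluded by the normalization. Two points must be checked beforehand. First, $S_+>0$. Second, the minimization really produces the Neumann problem: the test functions range over all of $H^s_{\R^n_+}$, including arbitrary values on $\R^n_-$, so the weak formulation in the Definition is exactly the first variation.

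\textbf{Step 1 (positivity and a strict energy inequality).} Let $S$ be the best constant of the fractional Sobolev inequality on $\R^n$, normalized as $\tfrac12\E(u,\R^{2n})\ge S\|u\|_{L^{2^*_s}(\R^n)}^2$.

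To show $S_+>0$, we reflect evenly across $\{x_n=0\}$. Given $u$, define $\tilde u$ on $\R^n$ as $u$ on $\R^n_+$ extended evenly. Then $\E(\tilde u,\R^{2n})\le C\,\E(u,\R^n_+\times\R^n_+)$, because the mixed terms are controlled by the same-side terms via $|x-\bar y|\ge|x-y|$. The Sobolev inequality on $\R^n$ then gives $S_+\ge c>0$.

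Next we want the strict inequality $S_+<2^{-2s/n}S$. The half-space analogue of this threshold comes from concentrating at a boundary point. I would show that concentration at an interior point costs exactly $S$, while at the boundary the Neumann structure makes it cheaper. Test with $u_\varepsilon$ equal to the Talenti-type extremal $U_\varepsilon(x)=\varepsilon^{-(n-2s)/2}U(x/\varepsilon)$ centred at a boundary point, restricted to $\R^n_+$, and on $\R^n_-$ take the values minimizing the energy. Alternatively, take $u_\varepsilon=U_\varepsilon$ on all of $\R^n$. By symmetry of $U$, $\|U\|_{L^{2^*_s}(\R^n_+)}^{2^*_s}=\tfrac12\|U\|^{2^*_s}$. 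Moreover,
\[
\E(U,T(\R^n_+)) = \E(U,\R^{2n})-\E(U,(\R^n_-)^2)=\tfrac34\E(U,\R^{2n})+\tfrac14\cdot(\text{cross terms})\cdots,
\]
and more precisely $\E(U,T)<\E(U,\R^{2n})-\tfrac12\E(U,\R^{2n})\cdot 0\ldots$. The clean computation is $\E(U,T)=\E(U,\R^{2n})-\E(U,(\R^n_-)^2)$, where the $(\R^n_-)^2$ part exceeds half of the $(\R^n_+)^2\cup(\R^n_-)^2$ contribution, minus the strictly positive cross contribution. This yields the quotient $\le \big(1-\tfrac12\theta\big)2^{2/2^*_s}S$ for some explicit $\theta$. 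I expect this to give a constant strictly below $2^{-2s/n}S$ only after exploiting that the values on $\R^n_-$ can be chosen freely. Honestly, the sharp threshold here is the delicate point.

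\textbf{Step 2 (compactness).} A simpler route is to work at fixed scale, using concentration-compactness on minimizing sequences normalized by $\|u_k\|_{L^{2^*_s}(\R^n_+)}=1$. Translations in the $x'$ directions and dilations are symmetries, so we can normalize these. Lions' profile decomposition then says that the loss of compactness comes either from vanishing, excluded by the normalization and the dilation choice, or from splitting. Splitting is excluded by the strict subadditivity $S_+(\lambda)$ of the homogeneous problem, since the ratio is $S_+\lambda^{2/2^*_s}$ and $2/2^*_s<1$. Interior bubbles would carry energy at least $S$. So if $S_+<S$, which is easier to prove than the sharp boundary threshold, the only remaining loss is a boundary bubble. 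But a boundary bubble is itself a minimizer profile of the same half-space problem, so after rescaling we get convergence. Hence I would establish $S_+<S$ by testing with $U$ extended to all of $\R^n$: the quotient becomes $\tfrac12\E(U,T)/(\tfrac12)^{2/2^*_s}\|U\|^2$. Since $\E(U,T)<\E(U,\R^{2n})$ strictly, the remaining issue is the factor $2^{2/2^*_s}=2^{(n-2s)/n}$; for this I need $\E(U,(\R^n_-)^2)>(1-2^{-(n-2s)/n})\E(U,\R^{2n})$. By symmetry $\E(U,(\R^n_-)^2)=\E(U,(\R^n_+)^2)$, and the two together plus the cross terms make up the total.

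The main obstacle is this strict inequality $S_+<S$, together with identifying the weak limit on $\R^n_-$, which is not controlled in $L^{2^*_s}$. For the latter, I would replace $u_k$ on $\R^n_-$ by the energy-minimizing extension. That extension is a weighted average of $u_k|_{\R^n_+}$, hence bounded by the local bounds obtained on $\R^n_+$.
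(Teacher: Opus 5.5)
\textbf{Gap: the strict inequality $S_+<S$ is never proved, and your test function cannot prove it.}

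Your overall architecture is sound. Minimizing the Neumann quotient and using the strict inequality $S_+<S$ to exclude loss of compactness is the right route. The compactness half is the content of \cite[Theorem 1.3]{MN}, which you can cite instead of re-deriving; this covers the behaviour of minimizing sequences on $\R^n_-$ and the fact that bubbles detaching from the boundary cost at least $S$.

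The genuine gap is $S_+<S$ itself, which you explicitly leave open. With the bubble centred on $\partial\R^n_+$ and extended by itself, you correctly reduce $S_+<S$ to
\[
\E(U,(\R^n_-)^2)>\bigl(1-2^{-(n-2s)/n}\bigr)\,\E(U,\R^{2n}).
\]
This fails for small $s$. As $s\to0^+$, the Gagliardo energy of $U$ is carried by long-range pairs. Half of the $L^2$-mass of $U$ lies in $\R^n_-$, and a far-away partner lies in $\R^n_-$ half of the time. Hence $\E(U,(\R^n_-)^2)/\E(U,\R^{2n})\to\frac14$, while $1-2^{-(n-2s)/n}\to\frac12$. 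Boundary-centred bubbles therefore only work near $s=1$ (or near $s=\frac12$ when $n=1$), which is the partial range already in \cite[Theorem 4.7]{MN}.

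Your Step 1 threshold $2^{-2s/n}S$ is a red herring. The half-space is invariant under dilations and tangential translations, so concentration at a boundary point is not a loss of compactness. In any case, the boundary-centred quotient is always $\ge 2^{-2s/n}S$, because $\E(U,(\R^n_-)^2)\le\frac12\E(U,\R^{2n})$.

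\textbf{The missing idea: push the bubble into the interior.} Normalize $\|U\|_{L^{2^*_s}(\R^n)}=1$ and take $U_t=U(\cdot-te_n)$ with $t\to\infty$. The mass lost in $\R^n_-$ is tiny: $\|U_t\|^{2^*_s}_{L^{2^*_s}(\R^n_+)}=1-O(t^{-n})$. The rescaling $x\mapsto tx$ gives
\[
\E(U_t,(\R^n_-)^2)=t^{-(n-2s)}\bigl(\kappa+o(1)\bigr),\qquad \kappa=\iint_{(\R^n_-)^2}\frac{\bigl(|x-e_n|^{2s-n}-|y-e_n|^{2s-n}\bigr)^2}{|x-y|^{n+2s}}\,dx\,dy\in(0,\infty).
\]
Both the finiteness of $\kappa$ and the dominated-convergence step use $n>2s$. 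Now $\E(U_t,T(\R^n_+))=\E(U_t,\R^{2n})-\E(U_t,(\R^n_-)^2)$, and $\E(U_t,\R^{2n})$ is independent of $t$. Since $n-2s<n$, the energy saving of order $t^{-(n-2s)}$ beats the mass loss of order $t^{-n}$. So the quotient drops strictly below $S$ for large $t$, for every $s\in(0,1)$ and $n>2s$. This is the paper's proof of Theorem~\ref{strict inequality}; Theorem~\ref{existence-critical} then follows from \cite[Theorem 1.3]{MN}.
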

    
    In our existence result, the fractional Sobolev constant in the whole $\R^n$ and the Sobolev constant for the Neumann problem in the half-space will play a central role. Let us introduce the corresponding notation.
    
    We define the \emph{Sobolev constant of $\R^n$} as 
    \begin{equation}\label{aubin talenti}
    S(\R^n):=\inf _{0 \neq u \in \mathcal D^s(\mathbb{R}^n)} \frac{\E(u\:,\:\R^{2n})}{\left(\int_{\R^{n}}|u|^{2^*_s} d x\right)^{\frac{2}{2^*_s}}},
    \end{equation}
where $\mathcal D^s(\R^n)$ is the abstract completion of $C^\infty_c(\R^n)$ with respect to $\mathcal E (\cdot, \R^{2n}).$
   
    It is well known that $S(\R^n)$ is achieved  by multiplications, dilations, and translations of the so-called \emph{Talenti Function}
    \begin{equation}\label{Talenti}U(x)=c_{n,s} \bigg(\frac{1}{1+\left|x \right|^2}\bigg)^{\frac{n-2s}{2}}.
    \end{equation}
    
    Moreover, we introduce the \emph{Neumann Sobolev constant of  $\R^n_+$} as \[
    S_\mathcal{N}(\R^n_+):=\inf _{0 \neq u \in H^s_{\mathbb{R}^n_+}} \frac{\E(u\:,\:T(\R^n_+))}{\left(\int_{\R^n_+}|u|^{2^*_s} d x\right)^{\frac{2}{2^*_s}}}.
    \]

   It is easy to see that problem \eqref{P-critical} admits a non trivial solution if the infimum in the definition of $S_\mathcal N(\R^n_+)$ is a minimum. In this case any minimizer is a solution. In \cite[Theorem 1.3]{MN}, Musina and Nazarov proved that
$S_\mathcal{N}(\R^n_+)$ is achieved if $S_\mathcal{N}(\R^n_+)<S(\R^n)$. Moreover, in \cite[Theorem 4.7]{MN}, they show that such a sufficient condition is satisfied for $s$ sufficiently close to $1^-$ if $n\ge 2$ and $s$ sufficiently close to $1/2^-$ if $n=1$. In the present paper, we prove that the above strict inequality holds true for all values of $s$ and $n$, according to the following result:

\begin{theorem}\label{strict inequality}
 For any $s\in(0,1)$ and {$n>2s$} we have \[
 S_\mathcal{N}(\R^n_+)<S(\R^n).
 \]
\end{theorem}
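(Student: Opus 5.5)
The plan is to exhibit one explicit competitor in $H^s_{\R^n_+}$ whose Neumann Rayleigh quotient is strictly below $S(\R^n)$. The natural candidate is the Talenti function $U$ from \eqref{Talenti}, centered at a point of $\partial\R^n_+$ (say the origin). Then only half of its $L^{2^*_s}$ mass lies in $\R^n_+$, which is what should produce a gain. The quotient is invariant under dilations centered on $\{x_n=0\}$ and under tangential translations, so no concentration or limiting argument should be needed. Everything reduces to comparing a few explicit integrals of $U$.

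\emph{Step 1: bookkeeping.} Write $K(x,y)=|x-y|^{-n-2s}$ and set
\[
A=\iint_{\R^n_+\times\R^n_+}(U(x)-U(y))^2K,\qquad B=\iint_{\R^n_+\times\R^n_-}(U(x)-U(y))^2K .
\]
By the reflection symmetry $x_n\mapsto -x_n$ of $U$, the $\R^n_-\times\R^n_-$ contribution also equals $A$. Hence
\[
\E(U,\R^{2n})=2A+2B,\qquad \E(U,T(\R^n_+))=A+2B.
\]
Put $M=\int_{\R^n}U^{2^*_s}$, so that $\int_{\R^n_+}U^{2^*_s}=M/2$ and $\E(U,\R^{2n})=S(\R^n)M^{2/2^*_s}$. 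Since $2/2^*_s=(n-2s)/n$, the desired strict inequality for this test function is
\[
A+2B<2^{-(n-2s)/n}(2A+2B),
\]
which is equivalent to
\[
\bigl(1-2^{-(n-2s)/n}\bigr)\,2B<\bigl(2\cdot2^{-(n-2s)/n}-1\bigr)A .
\]
So the whole proof comes down to an upper bound on the ratio $B/A$.

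\emph{Step 2: improve the competitor on $\R^n_-$.} The values of the test function on $\R^n_-$ enter only through the cross term. I would therefore keep $u=U$ on $\R^n_+$ and, for $x\in\R^n_-$, replace $U(x)$ by the pointwise minimizer of $t\mapsto\int_{\R^n_+}(t-U(y))^2K(x,y)\,dy$, namely the weighted average
\[
w(x)=\frac{\int_{\R^n_+}U(y)K(x,y)\,dy}{\int_{\R^n_+}K(x,y)\,dy}.
\]
This is exactly the extension satisfying $\mathcal N_s u=0$ in $\R^n_-$. It replaces $B$ by some $B'\le B$ and leaves $A$ and the denominator unchanged. One must check that $u\in H^s_{\R^n_+}$; this follows because $B'\le B<\infty$. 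If the inequality is not strict for $U$ itself, the strict decrease $B'<B$ gives additional room, since $w\neq U$ on $\R^n_-$.

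\emph{Step 3 (main obstacle): the quantitative comparison of $B$ (or $B'$) with $A$.} I would compute both quantities using the Euler--Lagrange equation
\[
(-\Delta)^sU=\lambda U^{2^*_s-1}\quad\text{in }\R^n .
\]
Testing it against $U\chi_{\R^n_+}$ and splitting with \eqref{int-by-parts} expresses $A+B$ through $\lambda M/2$. The cross term $B$ should then be isolated via the tangential Fourier transform, or via the Caffarelli--Silvestre extension: there $U$ extends to an explicit function on $\R^{n+1}_+$, and the half-space interaction reduces to one-dimensional integrals in $x_n$. As a first attempt I would try to prove directly that $B$ is small relative to $A$, using the decay of $K$ across the hyperplane together with the monotonicity of $U$ in $|x_n|$. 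This bounds $(U(x)-U(y))^2$ for $x\in\R^n_+$, $y\in\R^n_-$ by the corresponding quantity with $y$ reflected to $\bar y\in\R^n_+$, while $K(x,y)\le K(x,\bar y)$. This already gives $B\le A$. A sharper version of the same argument is needed to reach the threshold ratio
\[
\frac{2^{-(n-2s)/n}-\tfrac12}{1-2^{-(n-2s)/n}}\quad\text{(for the inequality with $2B$),}
\]
possibly with the help of Step 2. If the crude reflection bound falls short in some range of $(n,s)$, I would fall back on explicit evaluation of the one-dimensional integrals via Beta functions, or use a perturbation $U+\varepsilon\varphi$ on $\R^n_-$ and show that the first variation of the Neumann quotient is strictly negative.
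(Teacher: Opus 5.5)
Your bookkeeping in Steps 1--2 is correct, but the argument stops where the work lies. Step 3 never proves the required bound on $B/A$ (or $B'/A$). The only estimate you obtain, $B\le A$, is far from the threshold $\frac{\theta-1/2}{1-\theta}$ with $\theta=2^{-(n-2s)/n}$: this threshold is below $1$ once $2s/n<\log_2\frac32$, and it tends to $0$ as $s/n\to0$.

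More seriously, the competitor itself fails in part of the parameter range. As $s\to0^+$ the mass of the kernel $|x-y|^{-n-2s}$ escapes to infinity, where each half-space occupies half of the directions. One checks that $sA$ and $sB$ both converge to $\frac{|S^{n-1}|}{4}\|U_0\|^2_{L^2(\R^n)}$, with $U_0=(1+|x|^2)^{-n/2}$. Hence $B/A\to1$, and the Neumann quotient of $U$ tends to $\frac32\,S(\R^n)$.

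Step 2 does not save it. For $n=1$, use $\int_{\R_+}|x-y|^{-1-2s}\,dy=\frac{|x|^{-2s}}{2s}$ for $x<0$ to write $B$ and $B'$ explicitly. This gives $2B'-B=2\iint_{\R_-\times\R_+}\frac{U(x)U(y)}{|x-y|^{1+2s}}\,dx\,dy+o(1)$, while $(2^{2s}-1)(A+B)\to2\pi\ln2$. Hence
\[
(A+2B')-2^{2s}(A+B)\;\longrightarrow\;2\int_0^\infty\!\!\int_0^\infty\frac{da\,db}{\sqrt{1+a^2}\,\sqrt{1+b^2}\,(a+b)}-2\pi\ln 2=8G-2\pi\ln 2\approx 2.97>0,
\]
where $G$ is Catalan's constant. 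So even the optimally extended half-bubble stays above $S(\R)$ for small $s$. Your last fallback, perturbing on $\R^n_-$, gains nothing, since $w$ is already the pointwise optimal extension there. The half-bubble is the right guess for the local Neumann problem, but here the cross interaction $B$ is of the same order as $A$.

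The missing idea is a mechanism that compares two different decay rates rather than constants of the same order. The paper pushes the bubble deep into the half-space, $U_t(x)=c_{n,s}(1+|x-te_n|^2)^{-(n-2s)/2}$ with $t\to\infty$. By scaling, the lost mass is $\int_{\R^n_-}U_t^{2^*_s}=O(t^{-n})$. The discarded energy is $\E(U_t,(\R^n_-)^2)=t^{-(n-2s)}\bigl(L+o(1)\bigr)$, where
\[
L=\iint_{(\R^n_-)^2}\frac{\bigl(|x-e_n|^{2s-n}-|y-e_n|^{2s-n}\bigr)^2}{|x-y|^{n+2s}}\,dx\,dy\in(0,\infty).
\]
Finiteness of $L$, and the dominated convergence that justifies the limit, are exactly where $n>2s$ is used. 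Since $n-2s<n$, the quotient $\bigl(S(\R^n)-Lt^{-(n-2s)}+o(t^{-(n-2s)})\bigr)/\bigl(1-O(t^{-n})\bigr)$ is strictly below $S(\R^n)$ for $t$ large. This works in the whole range $s\in(0,1)$, $n>2s$, with no delicate comparison of constants.
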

 The proof of such a result is based on the construction of a suitable competitor for $S_\mathcal N(\R^n_+)$, which crucially uses the fractional Talenti function \eqref{Talenti}.

  
  Finally, Theorem \ref{existence-critical} immediately follows from Theorem \ref{strict inequality} combined with \cite[Theorem 1.3]{MN}.
 \medskip

 Let us move now to the subcritical regime. The following is our second main result.
\begin{theorem}\label{non exitence result}
    Assume $n=1$, $0<s<\frac12$ and let $u \in H^s_{\R_+}\cap \
    L^\infty(\R_+)$ be a positive weak solution of \eqref{P} with $$
1< p<\frac{1+2 s}{1-2 s}.
$$
Then $u \equiv 0$.
\end{theorem}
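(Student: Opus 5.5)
We argue via a Pohozaev-type identity for \eqref{P} with $n=1$, obtained by testing the equation against the dilation generator $x\,u'(x)$, in the spirit of Ros-Oton--Serra. The whole-line scaling $u_\lambda(x)=u(\lambda x)$ preserves both $\R_+$ and $\R_-$. It therefore preserves the interaction domain $T(\R_+)=\R^2\setminus(\R_-)^2$, and this is what makes a clean identity available. Formally, set $\E_T(u):=\E(u,T(\R_+))$ and use
\[
\E_T(u_\lambda)=\lambda^{2s-1}\E_T(u),\qquad \int_{\R_+}u_\lambda^{p+1}\,dx=\lambda^{-1}\int_{\R_+}u^{p+1}\,dx .
\]
Differentiating $\lambda\mapsto \frac12\E_T(u_\lambda)-\frac1{p+1}\int_{\R_+}u_\lambda^{p+1}$ at $\lambda=1$ and using the weak formulation with $v=\frac{d}{d\lambda}u_\lambda|_{\lambda=1}=xu'$ should give
\[
\frac{2s-1}{2}\,\E_T(u)=-\frac{1}{p+1}\int_{\R_+}u^{p+1}\,dx .
\]
Testing the weak formulation with $v=u$ gives $\E_T(u)=\int_{\R_+}u^{p+1}$. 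Combining the two,
\[
\Big(\frac{1-2s}{2}-\frac{1}{p+1}\Big)\int_{\R_+}u^{p+1}\,dx=0 .
\]
The bracket vanishes exactly at $p=\frac{1+2s}{1-2s}$, and it is negative for subcritical $p$. Hence $\int_{\R_+}u^{p+1}=0$, so $u\equiv0$ on $\R_+$. The Neumann condition then forces $u=0$ on $\R_-$ as well, since $\mathcal N_s u=0$ expresses $u(x)$ for $x<0$ as a weighted average of $u$ over $\R_+$.

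The key steps, in order, are as follows.
\begin{enumerate}[(i)]
\item \emph{Regularity.} Since $u$ is bounded, we obtain interior regularity of $u$ on $\R_+$ ($C^{2s+\alpha}_{loc}$, by bootstrapping on $(-\Delta)^s u=u^p$). We also obtain a boundary description near $x=0$. Here it helps that $u$ on $\R_-$ is given explicitly by the Neumann averaging formula, so $u$ is smooth on $\R_-$ and continuous across $0$.
\item \emph{Decay.} We prove decay estimates at infinity: $u(x)\to0$ as $x\to+\infty$ at a rate with $xu'(x)$ controlled, so that $xu'\in H^s_{\R_+}$ or at least can be approximated. For this I would compare with the fundamental-solution-type decay $|x|^{-(1-2s)}$, using $u\in L^{2^*_s}$ together with a Moser/comparison argument. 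On $\R_-$, the averaging formula transfers the decay and shows $u(x)\to0$ as $x\to-\infty$.
\item \emph{Justifying the identity.} We justify the Pohozaev identity by replacing the derivative in $\lambda$ with difference quotients $\frac{u_\lambda-u}{\lambda-1}$, which do lie in $H^s_{\R_+}$. We test the weak formulation with them and pass to the limit $\lambda\to1$. The energy terms are handled through the exact scaling law, and the nonlinear term through dominated convergence, using the decay from step (ii).
\end{enumerate}

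The main obstacle is step (iii), together with the decay it requires. In the difference-quotient approach, the bilinear term $\iint_T\frac{(u(x)-u(y))(u_\lambda(x)-u_\lambda(y))}{|x-y|^{1+2s}}$ is not directly a function of $\E_T(u_\lambda)$. One writes it as
\[
\tfrac12\big(\E_T(u)+\E_T(u_\lambda)-\E_T(u-u_\lambda)\big),
\]
and then needs $\E_T(u-u_\lambda)=o(|\lambda-1|)$. This reduces to showing that $\lambda\mapsto u_\lambda$ is differentiable (or at least $o(|\lambda-1|^{1/2})$-continuous) in the energy seminorm. That requires $xu'$ to have finite $T$-energy, which is exactly where the decay estimates and the boundary behaviour at $0$ enter. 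If this is delicate, I would first try a cutoff version: test with $\eta_R\,xu'$, prove the identity with error terms, and show that the errors vanish as $R\to\infty$ using the decay rates.
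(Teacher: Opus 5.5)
Your formal scaling computation is correct and gives exactly the identity the paper proves (Theorem~\ref{Pohozaev} combined with testing by $u$). The gap is in step (ii), where the proposal has no working mechanism.

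What is needed is a \emph{quantitative, solution-independent} decay on $\R_+$: $u(x)\le C\min\{1,x^{-2s/(p-1)}\}$ and $|x\dot u(x)|\le C\min\{1,x^{-2s/(p-1)}\}$. The second bound includes $|\dot u(x)|\le C/x$ as $x\to0^+$. The averaging formula on $\R_-$ says nothing about this, and continuity of $u$ across $0$ is neither proved nor what is required.

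Your suggested tools do not deliver such a rate. Moser iteration from $u\in L^{2^*_s}(\R_+)$ only gives $u(x)\to0$, with no rate. Comparison with $w=|x|^{-a}$, $0<a<1-2s$, is circular. For this $w$ one has $(-\Delta)^s w=C(a)|x|^{-2s}w$ with $C(a)>0$, so $w$ is a supersolution of the linearized equation $(-\Delta)^s\phi=u^{p-1}\phi$ only where $u^{p-1}(x)\le C(a)x^{-2s}$. That presupposes $u\lesssim x^{-2s/(p-1)}$, the very estimate to be proved.

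The missing idea is the blow-up argument of Lemmas~\ref{decay} and~\ref{doubling-derivative}. If the bound failed, the Doubling Lemma of Pol\'a\v{c}ik--Quittner--Souplet and local regularity would produce a bounded nontrivial limit solving $(-\Delta)^s v=v^p+b$ in the whole space. One shows $b=0$ through a Riesz-potential lower bound. This contradicts the whole-space Liouville theorem for subcritical $p$; for the gradient bound near $0$ one uses instead the Liouville theorem for bounded $s$-harmonic functions.

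This rate is exactly what the argument needs. One has $\frac{2s(p+1)}{p-1}>1$ if and only if $p<\frac{1+2s}{1-2s}$, and this gives an integrable majorant and $x\,u^{p+1}(x)\to0$ at infinity. So subcriticality enters the justification, not only the sign of your final bracket.

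Step (iii) is also left open in your write-up, but the obstacle you single out is not real. Testing with $v=u_\lambda-u\in H^s_{\R_+}$ gives, with your $\E_T$, the exact identity
\[
\frac{1}{\lambda-1}\int_{\R_+}u^p\,(u_\lambda-u)\,dx=\frac{\lambda^{2s-1}-1}{2(\lambda-1)}\,\E_T(u)-\frac{\E_T(u_\lambda-u)}{2(\lambda-1)}.
\]
The term $-\E_T(u_\lambda-u)/(2(\lambda-1))$ is $\le 0$ for $\lambda>1$ and $\ge 0$ for $\lambda<1$. Letting $\lambda\to1^{\pm}$ therefore gives two opposite inequalities, hence $\int_{\R_+}u^p\,x\dot u\,dx=\frac{2s-1}{2}\E_T(u)$. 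You never need $\E_T(u_\lambda-u)=o(|\lambda-1|)$ or $x\dot u\in H^s_{\R_+}$.

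The only limit to justify is the left-hand side. This follows by dominated convergence, using the mean value theorem on $[x/2,3x/2]$ and the majorant $C\min\{1,x^{-2s(p+1)/(p-1)}\}$ supplied by the bounds above.

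Given those bounds, this route is shorter than the paper's. The paper instead tests with the cut-offs $u\varphi(kx)$ and $u\varphi(x/k)$ in Lemmas~\ref{energy expansion non local}--\ref{energy expansion-v2}, uses the Leibniz rule for $(-\Delta)^s$ and $\mathcal N_s$, and therefore also needs the decay on $\R_-$ from Corollary~\ref{decay2}. Without the decay estimates of Section~\ref{sec 3 lane emden}, however, neither route closes, so as written the proposal does not prove the theorem.
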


In the classical local setting, Gidas and Spruck in \cite{GS} proved a celebrated Liouville-type result which states that if $1< p<\frac{n+2}{n-2}$, the only non-negative solution to the equation $-\Delta u=u^p$ in $\R^n$, is $u\equiv 0 $. If we consider the same local equation in the positive half-space with Neumann boundary conditions, it is easy to see that the same result holds true. Indeed, in such a case, thanks to the Neumann condition, we can extend evenly the solution to the whole $\R^n$ and apply the previous result.
Such Liouville-type results are not only interesting in themselves, but are also crucial tools when one wants to prove a priori estimates for solutions to certain nonlinear equations by a blow up argument à la Gidas-Spruck (see \cite{GS2}).

In the fractional setting, an analogous Liouville-type result for the subcritical Lane-Emden equation in the whole $\R^n$ has been established in \cite{BCPS,JLX}. When considering the problem in a half-space (or in more general subsets of $\R^n$), the only available results consider Dirichlet boundary conditions. In particular, it has been proven in \cite{Chen,Fall-Weth1} that, if $1< p< \frac{n+2s}{n-2s}$, any non-negative solution to 
$$\begin{cases}
    (-\Delta)^su=u^p & \mbox{in }\R^n_+\\
    u= 0 &  \mbox{in }\R^n_-
\end{cases}$$
identically vanishes. This result has been improved in \cite{Fall-Weth2,Quuas-Xia} by allowing any power $1< p < \frac{n+2s-1}{n-2s-1}$. The proof of such results is based on the method of moving planes.

 Regarding similar Liouville-type theorems for problem \eqref{P}, the situation is much more delicate, due to the non-local nature of the non-local Neumann condition. Indeed, in this case, it is not possible to reflect a solution of \eqref{P} into a solution of $(-\Delta)^su=u^p$ in the full space and arguments based on the method of moving-planes seem not suitable for such a situation.
 
 Our proof of Theorem \ref{non exitence result} is based on a new Pohozaev identity, in dimension $n=1$, which we state here below.

\begin{theorem}\label{Pohozaev}Let $u\in H^s_{\R_+}\cap L^\infty (\R_+)$ be weak solution of \eqref{P} with $n=1$, $0<s<\frac{1}{2}$ and $1<p<\frac{1+2s}{1-2s}$. 

Then $u$ fulfills the relation \[
    (1-2s)\frac{c_s}{2}\iint_{T(\R^+)}\frac{(u(x)-u(y))^2}{|x-y|^{1+2s}}dxdy=
                           -2\int_0^{+\infty}(-\Delta)^su(x)x\dot u(x)dx
    .\]
  \end{theorem}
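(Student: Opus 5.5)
We prove the identity by the dilation method. Set $u_\lambda(x):=u(\lambda x)$ for $\lambda>0$ and differentiate the energy $\E(u_\lambda, T(\R_+))$ at $\lambda=1$. Since $T(\R_+)=\R^2\setminus(\R_-)^2$ is invariant under the scaling $(x,y)\mapsto(\lambda x,\lambda y)$ (because $\R_\pm$ are cones), the change of variables gives
\[
\E(u_\lambda,T(\R_+))=\lambda^{2s-1}\E(u,T(\R_+)),
\qquad
\frac{d}{d\lambda}\Big|_{\lambda=1}\frac{c_s}{2}\E(u_\lambda,T(\R_+))=(2s-1)\frac{c_s}{2}\E(u,T(\R_+)).
\]
Formally, differentiating inside the integral gives $\frac{d}{d\lambda}|_{\lambda=1}u_\lambda=x\dot u(x)$. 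Then
\[
\frac{d}{d\lambda}\Big|_{\lambda=1}\frac{c_s}{2}\E(u_\lambda,T(\R_+))=c_s\iint_{T(\R_+)}\frac{(u(x)-u(y))(x\dot u(x)-y\dot u(y))}{|x-y|^{1+2s}}\,dx\,dy,
\]
and by the integration by parts formula \eqref{int-by-parts} with $\Omega=\R_+$ this equals
\[
2\int_0^\infty(-\Delta)^s u\, x\dot u\,dx+2\int_{-\infty}^0 \mathcal N_s u\, x\dot u\,dx .
\]
The Neumann condition $\mathcal N_s u=0$ on $\R_-$ kills the second term, and comparing the two expressions yields the claimed identity.

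To make this rigorous, we proceed as follows.

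\textbf{Step 1 (regularity).} Since $u\in L^\infty(\R_+)$ solves $(-\Delta)^s u=u^p$ in $\R_+$, interior Schauder-type estimates give $u\in C^{2s+\alpha}_{loc}(\R_+)$. On $\R_-$, the Neumann condition expresses $u$ as the weighted average
\[
u(x)=\frac{\int_{\R_+}u(y)|x-y|^{-1-2s}\,dy}{\int_{\R_+}|x-y|^{-1-2s}\,dy},
\]
so $u$ is smooth on $\R_-$ and bounded there as well. We also need boundary behaviour near $x=0$: an estimate such as $|\dot u(x)|\le C|x|^{-\gamma}$ with $\gamma<1$, so that $x\dot u$ is integrable against the relevant kernels.

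\textbf{Step 2 (decay at infinity).} This is where the decay estimates announced in the abstract enter, and it is where the subcriticality $p<\frac{1+2s}{1-2s}$ and the assumption $u\in H^s_{\R_+}$ should be used. We aim to show $u(x)\lesssim |x|^{-(1-2s)}$, or at least fast enough decay, together with $|x\dot u(x)|\lesssim |x|^{-(1-2s)}$ as $|x|\to\infty$. The bound on $x\dot u$ should come from scaled interior estimates applied on balls $B_{|x|/2}(x)$. These bounds guarantee $x\dot u\in H^s_{\R_+}$-type integrability and that $(-\Delta)^s u\, x\dot u=u^p x\dot u$ is integrable on $\R_+$.

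\textbf{Step 3 (justifying the derivative).} To avoid differentiating under the integral near $x=0$ and at infinity, we apply \eqref{int-by-parts} to the difference quotients $v_\lambda=(u_\lambda-u)/(\lambda-1)$, or alternatively to cutoffs $\varphi_R(x)\varphi_\varepsilon(x)\,x\dot u$. We then pass to the limit using dominated convergence, with Steps 1 and 2 supplying the dominating functions. Since $\mathcal N_s u=0$ pointwise on $\R_-$, the boundary term vanishes exactly.

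The main obstacle is Step 2, together with controlling the commutator/error terms that the cutoff produces near the interface $x=0$, where $u$ is only Hölder continuous. For the decay, we would first try a comparison or Kelvin-type argument: in dimension one, the Neumann problem behaves like a problem on $\R$ with a kernel bounded above and below. If that fails, we would try an iteration in which the Neumann averaging formula on $\R_-$ transfers decay from $\R_+$ to $\R_-$.
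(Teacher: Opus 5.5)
\paragraph{Verdict: genuine gap.} Your formal dilation computation is right. For $w\in C^1_c(\R\setminus\{0\})$, the condition $s<\frac12$ lets you differentiate under the integral and integrate by parts, and the computation then reproduces Lemma \ref{energy expansion non local}. But the two steps that make up the actual proof are left open.

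\paragraph{Step 2: the decay estimates are missing.} Step 2 lists candidate strategies but carries none out. The rate you target, $u\le C|x|^{-(1-2s)}$, is more than is needed. It is also stronger than the natural scaling rate $x^{-\frac{2s}{p-1}}$ whenever $p>\frac{1}{1-2s}$. The missing idea is the blow-up argument based on the Doubling Lemma of Pol\'a\v{c}ik--Quittner--Souplet:
\begin{itemize}
\item If $u^{\frac{p-1}{2s}}+|\dot u|^{\frac{p-1}{2s+p-1}}$ were not bounded by $C/x$ on $\R_+$, rescaling at doubling points would produce a bounded nontrivial nonnegative solution of $(-\Delta)^s w=w^p+b$ on all of $\R$.
\item One shows the tail constant $b$ vanishes, which contradicts the Jin--Li--Xiong Liouville theorem. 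This is where $p<\frac{1+2s}{1-2s}$ is used.
\item The result is $u\le Cx^{-\frac{2s}{p-1}}$ and $|\dot u|\le Cx^{-1-\frac{2s}{p-1}}$ on $\R_+$ (Lemma \ref{decay}).
\item The same blow-up applied to $|\dot u|$ alone gives a limit that is a bounded $s$-harmonic function, hence constant. This yields $|\dot u(x)|\le C/x$ for $0<x\le 1$ (Lemma \ref{doubling-derivative}).
\end{itemize}
That last bound is $\gamma=1$, not the $\gamma<1$ you ask for in Step 1, for which you give no method either. For $s<\frac12$, $\gamma=1$ suffices.

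\paragraph{Step 3: the limit passage fails as written.} Dominated convergence in $\iint_{T(\R_+)}\frac{(u(x)-u(y))(v(x)-v(y))}{|x-y|^{1+2s}}\,dx\,dy$, with $v=v_\lambda$ or $v=\psi\,x\dot u$, needs control of $x\dot u(x)-y\dot u(y)$ near the diagonal and across $x=0$. That control is not available; it is not even known that $x\dot u\in H^s_{\R_+}$. Moreover, with a cut-off test function the left side is no longer the dilation derivative of the energy.

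The paper instead applies the $C^1_c(\R\setminus\{0\})$ identity to $u\varphi(kx)$ and $u\varphi(x/k)$ and expands with the Leibniz rule. It then shows by scaling that the commutators $u(-\Delta)^s\varphi_k-I(u,\varphi_k,\R)$ and their $\mathcal N_s$ analogues behave as follows:
\begin{itemize}
\item near $0$ they contribute $O(k^{2s-1})$, using only $|x\dot u|\le C$;
\item at infinity they vanish because $\frac{4s}{p-1}>1-2s$.
\end{itemize}
This route also needs decay on $\R_-$, transferred from $\R_+$ through the Neumann averaging formula (Corollary \ref{decay2}).

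\paragraph{A cheaper way to close your difference-quotient route.} Once the $\R_+$ bounds are known, no commutator estimates are needed. Set
$\mathcal B(u,w)=\frac{c_s}{2}\iint_{T(\R_+)}\frac{(u(x)-u(y))(w(x)-w(y))}{|x-y|^{1+2s}}\,dx\,dy$.
The dilate $u_\lambda$ is an admissible test function, so $\Phi(\lambda):=\mathcal B(u,u_\lambda)=\int_0^\infty u^p u_\lambda\,dx$. Since $T(\R_+)$ is dilation invariant, $\mathcal B(u_\lambda,w)=\lambda^{2s-1}\mathcal B(u,w_{1/\lambda})$. Symmetry of $\mathcal B$ then gives $\Phi(\lambda)=\mathcal B(u_\lambda,u)=\lambda^{2s-1}\Phi(1/\lambda)$.

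Differentiating at $\lambda=1$ is justified by dominated convergence. Indeed, $u^p(x)\sup_{\mu\in[\frac12,2]}|x\dot u(\mu x)|\le C\min\{1,x^{-\frac{2s(p+1)}{p-1}}\}$, which is integrable since $p<\frac{1+2s}{1-2s}$. The derivative yields $2\int_0^\infty u^p x\dot u\,dx=(2s-1)\mathcal B(u,u)$, which is the claimed identity.
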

 
 Usually, these kind of identities follow from multiplying the equation by $x\cdot\nabla u$ and integrating by parts. However, since we are working with weak solutions, this is equivalent to using $x\cdot \nabla u$ as a test function in the weak formulation. The problem is that, in general, if $u\in H^s_{\R^n_+}$ then $u$ is not differentiable up to $\partial \R^n_+$, furthermore it is not even clear whether $x\cdot \nabla u$ belongs to $H^s_{\R^n_+}.$ Hence, in order to prove Theorem \ref{Pohozaev}, we rely on an approximation argument. This argument is based on some decay estimates of $u$ and its gradient with respect to the distance from $\partial \R^n_+$. The precise result is the following: 
\begin{lemma}[see Lemma \ref{decay}]\label{decay-intro} Let $s\in(0,1)$, $n>2s$, and $1<p<\frac{n+2s}{n-2s}$.
Then, there exists a constant $C$, that depends on $n$, $s$, and $p$, such that for every weak solution $u\in H^s_{\R^n_+}\cap L^\infty(\R^n_+)$ of \eqref{P} it holds \[
u(x)+|\nabla u(x)|^{\frac{2s}{2s+p-1}}\leq {C}\,{x_n^{-\frac{2s}{p-1}}}\quad\text{for all $x\in\R^n_+.$}
\]
\end{lemma}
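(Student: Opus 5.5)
We will prove the estimate by a blow-up and rescaling argument in the spirit of Polá\v{c}ik--Quittner--Souplet, combined with the Liouville theorem in the whole space of \cite{BCPS,JLX} and the one for the Dirichlet half-space problem. The scaling is as follows. If $u$ solves \eqref{P}, then $u_\lambda(x)=\lambda^{\frac{2s}{p-1}}u(\lambda x+x_0)$ with $x_0\in\partial\R^n_+$ again solves \eqref{P}. Moreover $u$ and $|\nabla u|^{\frac{2s}{2s+p-1}}$ scale in the same way, since $\nabla u$ scales like $\lambda^{-\frac{2s}{p-1}-1}$ and $\bigl(\frac{2s}{p-1}+1\bigr)\frac{2s}{2s+p-1}=\frac{2s}{p-1}$.

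Set $M(x)=u(x)^{\frac{p-1}{2s}}+|\nabla u(x)|^{\frac{p-1}{2s+p-1}}$. The claim is equivalent to $M(x)\le C x_n^{-1}$. We argue by contradiction. Suppose there are solutions $u_k$ and points $y_k$ with $M_k(y_k)\,(y_k)_n>2k$. The doubling lemma, applied on the domain $\R^n_+$ with distance $x_n$ to its complement, gives points $x_k$ with $M_k(x_k)(x_k)_n>2k$ and $M_k\le 2M_k(x_k)$ on $B(x_k,k/M_k(x_k))$. The doubling lemma needs $M_k$ bounded on compact sets. We get this from interior regularity: $u_k$ is bounded, so $(-\Delta)^s u_k=u_k^p$ gives local $C^{1,\alpha}$ bounds inside $\R^n_+$.

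Rescale by $\lambda_k=M_k(x_k)^{-1}$ and set $v_k(z)=\lambda_k^{\frac{2s}{p-1}}u_k(x_k+\lambda_k z)$. Then $v_k$ solves $(-\Delta)^s v_k=v_k^p$ in $B_k\subset$ the rescaled half-space, and the normalization gives $M_{v_k}(0)=1$ and $M_{v_k}\le 2$ on $B_k$. The rescaled half-space $\{z_n>-(x_k)_n/\lambda_k\}$ exhausts $\R^n$.

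The key difficulty, and the main obstacle, is that the equation is nonlocal. Local bounds on $B_k$ do not control the tail of $v_k$ outside $B_k$. In the rescaled picture this tail includes the region near the boundary and the whole region $\R^n_-$, where $v_k$ is determined by the Neumann condition. We would handle it as follows.
\begin{enumerate}[(i)]
\item Use the Neumann condition to write $u$ on $\R^n_-$ as a weighted average of its values on $\R^n_+$: $u(x)=\int_{\R^n_+}u(y)|x-y|^{-n-2s}dy\big/\int_{\R^n_+}|x-y|^{-n-2s}dy$. Hence $0\le u\le \sup_{\R^n_+}u$ everywhere.
\item Obtain an a priori $L^\infty$-type control of weighted tail integrals $\int \frac{v_k(z)}{1+|z|^{n+2s}}dz$. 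For this we test the equation against a cutoff or the first eigenfunction of a ball, in the style of \cite{BCPS}, or we use the weak Harnack inequality for nonnegative supersolutions. Either way, $\int_{B_1} v_k^p$ and the weighted $L^1$ tail stay bounded in terms of $\sup_{B_1}v_k\le 2^{2s/(p-1)}$.
\end{enumerate}
With the tails controlled, interior $C^{1,\alpha}$ estimates for the fractional Laplacian (Silvestre, Ros-Oton--Serra) give local compactness. A subsequence then converges in $C^1_{loc}(\R^n)$ to a bounded nonnegative solution $v$ of $(-\Delta)^s v=v^p$ in $\R^n$, and tail convergence follows from dominated convergence with the weight.

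Passing $M_{v_k}(0)=1$ to the limit gives $v(0)^{\frac{p-1}{2s}}+|\nabla v(0)|^{\frac{p-1}{2s+p-1}}=1$, so $v\not\equiv0$. Here $C^1$ convergence is used, which is why the gradient is included in $M$. Since $1<p<\frac{n+2s}{n-2s}$, this contradicts the Liouville theorem of \cite{BCPS,JLX}. We expect the uniform tail/nonlocal control in step (ii) to be the delicate step. If the weighted-$L^1$ bound via testing fails, we would try a Harnack-based comparison using (i) to bound the exterior values by interior suprema.
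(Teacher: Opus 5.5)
You follow the paper's outline: the doubling lemma for $M=u^{\frac{p-1}{2s}}+|\nabla u|^{\frac{p-1}{2s+p-1}}$, rescaling, nonnegativity of $u$ on $\R^n_-$ from the Neumann representation, and the Liouville theorem in $\R^n$. Your step (ii) is also sound. Test $(-\Delta)^s v_k=v_k^p\ge 0$ against a cutoff $\eta\in C^\infty_c(B_1)$ and use $(-\Delta)^s\eta(z)\le -c|z|^{-n-2s}$ for $|z|\ge 2$. This gives $\int_{B_2^c}v_k|z|^{-n-2s}\,dz\le C\int_{B_2}v_k\le C$, and hence uniform local $C^{1,\alpha}$ bounds.

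The gap is the sentence ``tail convergence follows from dominated convergence with the weight''. A uniform bound on $\int_{\R^n} v_k(1+|z|^{n+2s})^{-1}dz$ is not a dominating function. You only know $v_k\le 2^{\frac{2s}{p-1}}$ on $B_k$, and outside $B_k$ you have no pointwise control. Indeed $\sup_{\R^n}v_k=\lambda_k^{\frac{2s}{p-1}}\|u_k\|_{L^\infty}$ is not bounded uniformly in $k$, so your fallback via (i) gives nothing either. Part of the weighted mass may therefore escape to infinity. For example, $w_k=k^{2s}\chi_{B_{2k}\setminus B_k}$ has bounded weighted tails and $w_k\to 0$ locally, yet $(-\Delta)^s w_k\to -b\neq 0$ pointwise. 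In general you only get, for every $x\in\R^n$,
\[
\lim_{k\to\infty}(-\Delta)^s v_k(x)=(-\Delta)^s v(x)-b,\qquad b=c_{n,s}\lim_{R\to\infty}\lim_{k\to\infty}\int_{B_R^c}\frac{v_k(z)}{|z|^{n+2s}}\,dz\in[0,\infty).
\]
So the limit solves $(-\Delta)^s v=v^p+b$, and the Liouville theorem of \cite{BCPS,JLX} cannot be applied until you show $b=0$.

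This is exactly the step the paper supplies. It quotes \cite[Theorem 1.1]{Du-Jin-Xiong-Yang} for the displayed limit; your tail bound is what guarantees $b<\infty$. It then rules out $b>0$ using only that the limit $v$ is bounded. Let $T_R$ be the Riesz potential of $(v^p+b)\chi_{B_R}$. The maximum principle applied to $v-T_R$ gives $v\ge T_R$, and letting $R\to\infty$ yields $v(x)\ge c\int_{\R^n}b\,|x-y|^{2s-n}\,dy=\infty$, a contradiction. Alternatively, testing the limit equation with $\eta(\cdot/R)$ gives $b\,c\,R^n\le\|v\|_{L^\infty}\|(-\Delta)^s\eta\|_{L^1}R^{n-2s}$, which forces $b=0$ as $R\to\infty$. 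With this step added, your argument is complete.
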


The proof of these decay estimates relies on the Doubling Lemma of Pol\'ačik, Quittner, and Souplet \cite[Lemma 5.1]{PQS}. If $\mathcal N_s u = 0$, we can express $u$ in $\R^n_-$ via an integral over $\R^n_+$. Combining this representation with the decay of $u$ in the positive half-space, we obtain similar estimates in $\R^n_-$ as well.
Observe that the above Lemma holds in any dimension $n$ but the decay estimate just involves the last coordinate $x_n$.

   Clearly it would be very interesting to prove the Pohozaev identity  in any dimension. The underlying idea is roughly the same, but the technical difficulty relies on the fact that we do not have a decay of $u$ with respect to the variables $x_1,..., x_{n-1}.$
\medskip

    The paper is organized as follows: in Section \ref{sec 2 lane emden}, we show the strict inequality $S_\mathcal{N}(\R^n_+)<S(\R^n)$, which implies the existence result in the critical case; Section \ref{sec 3 lane emden} contains the decay estimates of solutions and, in particular, the proof of Lemma \ref{decay-intro}; finally in Section \ref{sec 4 lane emden}, we prove the Pohozaev identity and the non-existence result in the subcritical regime.

\section{Existence of nontrivial solutions in the critical regime}\label{sec 2 lane emden}
As explained in the introduction, Theorem \ref{existence-critical} follows directly from the strict inequality between the Sobolev constants of Theorem \ref{strict inequality} and \cite[Theorem 1.3]{MN}.

Let us prove now Theorem \ref{strict inequality}.
\begin{proof}[Proof of Theorem \ref{strict inequality}]In the sequel, $c$ will denote a positive constant whose value may change from line to line; subscripts indicate the parameters it depends on.

    It is well known (see for instance \cite{LIEB}) that $S(\R^n)$ is attained by multiplications, dilations, and translations of the so--called \emph{Talenti Function}. Hence, if we define for every $t>0$ the function \[U_t(x)=c_{n,s} \Bigg(\frac{1}{1+\left|x-t e_n\right|^2}\Bigg)^{\frac{n-2s}{2}},\] where $c_{n,s}$ is a constant (independent of $t$) such that $\|U_t\|_{L^{2^*_s}(\R^n)}=1$, we have \[
    S(\R^n)=\E(U_t\:,\:\R^{2n}).
    \]Now we observe that \begin{align}\label{denominator}\nonumber\int_{\mathbb{R}_{+}^n}\left|U_t\right|^{\frac{2 n}{n-2 s}}dx&=1-c_{n,s}\int_{\mathbb{R}^n_-}\bigg(\frac{1}{1+\left|x-t e_n\right|^2}\bigg)^n d x\\\nonumber&=1-\frac{c_{n,s}}{t^{2n}}\int_{\mathbb{R}_{-}^n} \bigg(\frac{1}{\frac{1}{t^2}+\left|\frac{x}{t}-e_n\right|^2}\bigg)^n d x\\&=1-\frac{c_{n,s}}{t^n} \int_{\mathbb{R}^n_-}\bigg(\frac{1}{\frac{1}{t^2}+|x-e_n|^2}\bigg)^n d x.
    \end{align}Thanks to the {dominated convergence theorem, the integral} in \eqref{denominator} converges to $$\int_{\mathbb{R}^n_-}\frac{1}{|x-e_n|^{2n}} d x,$$ and hence we have \begin{equation}\label{denominator_asimptotic}
        \int_{\mathbb{R}_{+}^n}\left|U_t\right|^{\frac{2 n}{n-2 s}}=1-O\left(\frac{1}{t^n}\right)\quad\text{as $t\to+\infty$.}
    \end{equation}Now we observe that
    \begin{align}\nonumber\label{numerator}
        \E(U_t\:,\:(\R^n_-)^2)&=c_{n,s}\iint_{\left( \mathbb{R}_{-}^n\right)^2}\frac{\left(\left(\frac{1}{1+\left|x-t e_n\right|^2}\right)^{\frac{n-2s}{2}}-\left(\frac{1}{1+\left|y-t e_n\right|^2}\right)^{\frac{n-2s}{2}}\right)^2}{|x-y|^{n+2s}} d x d y\\
        &\nonumber=\frac{c_{n,s}}{t^{3n-2s}}\iint_{\left( \mathbb{R}_{-}^n\right)^2}\frac{\left(\left(\frac{1}{\frac{1}{t^2}+\left|\frac{x}{t}- e_n\right|^2}\right)^{\frac{n-2s}{2}}-\left(\frac{1}{\frac{1}{t^2}+\left|\frac{y}{t}- e_n\right|^2}\right)^{\frac{n-2s}{2}}\right)^2}{|\frac{x}{t}-\frac{y}{t}|^{n+2s}} d x d y\\
        &=\frac{c_{n,s}}{t^{n-2s}}\iint_{\left( \mathbb{R}_{-}^n\right)^2}\frac{\left(\left(\frac{1}{\frac{1}{t^2}+\left|x- e_n\right|^2}\right)^{\frac{n-2s}{2}}-\left(\frac{1}{\frac{1}{t^2}+\left|y- e_n\right|^2}\right)^{\frac{n-2s}{2}}\right)^2}{|x-y|^{n+2s}} d x d y.
    \end{align}We claim that the integral in \eqref{numerator} converges to \begin{equation}\label{limit integral}\iint_{\left( \mathbb{R}_{-}^n\right)^2}\frac{\left(\frac{1}{\left|x- e_n\right|^{n-2s}}-\frac{1}{\left|y- e_n\right|^{n-2s}}\right)^2}{|x-y|^{n+2s}} d x d y\end{equation} and that this last integral is finite. Given the claim, we have \[
    \E(U_t\:,\:(\R^n_-)^2)=O\left(\frac{1}{t^{n-2s}}\right)
    \] and combining this estimate with \eqref{denominator_asimptotic} we find \begin{align*}
    S_\mathcal N(\R^n_+)\leq\frac{\E(U_t\:,\:T(\R^{n}_+))}{\|U_t\|^2_{L^{2^*_s}(\R^n_+)}}=\frac{\E(U_t\:,\:\R^{2n})-\E(U_t\:,\:(\R^n_-)^2)}{\|U_t\|^2_{L^{2^*_s}(\R^n_+)}}=\frac{S(\R^n)-O(\frac{1}{t^{n-2s}})}{1-O(\frac{1}{t^n})}<S(\R^n)
    \end{align*} provided that $t$ is large enough. Hence we need only to show the claim. First we prove that \eqref{limit integral} is finite; in order to do this we observe that, arguing as in \cite[page 263]{LL}, we have \begin{equation}\label{LLestimate}
    \left|\frac{1}{|x-e_n|^{n-2 s}}-\frac{1}{|y-e_n|^{n-2 s}} \right|\leq c_{n,s}|x-y|^\alpha \max \left\{\frac{1}{|x-e _n|^{n-2 s+\alpha}},\frac{1}{| y-e_n|^{n-2 s+\alpha}}\right\}
    \end{equation}where $\alpha$ can be any number in $(0,1)$. Now we split the integral in \eqref{limit integral} as \[
    \iint_{\left( \mathbb{R}_{-}^n\right)^2}\frac{\left(\frac{1}{\left|x- e_n\right|^{n-2s}}-\frac{1}{\left|y- e_n\right|^{n-2s}}\right)^2}{|x-y|^{n+2s}} d x d y=I_1+I_2\]where\[I_1=\iint_{(\mathbb{R}_{-}^n)^2\cap\{|x-y|>\frac{1}{2}\}}\frac{\left(\frac{1}{\left|x- e_n\right|^{n-2s}}-\frac{1}{\left|y- e_n\right|^{n-2s}}\right)^2}{|x-y|^{n+2s}} d x d y,\]and\[I_2=\iint_{(\mathbb{R}_{-}^n)^2\cap\{|x-y|\leq\frac{1}{2}\}}\frac{\left(\frac{1}{\left|x- e_n\right|^{n-2s}}-\frac{1}{\left|y- e_n\right|^{n-2s}}\right)^2}{|x-y|^{n+2s}} d x d y.
    \]In order to estimate $I_1$ we use \eqref{LLestimate} with $\alpha_1<s$ and we get \begin{align}\nonumber\label{I1}
        I_1&\leq c_{n,s}\iint_{(\R^n_-)^2\cap\{|x-y|>\frac{1}{2}\}}\frac{\max\left\{\frac{1}{|x-e_n|^{n-2s+\alpha_1}},\frac{1}{|y-e_n|^{n-2s+\alpha_1}}\right\}^2}{|x-y|^{n+2s-2\alpha_1}}dxdy\\&\leq  c_{n,s}\int_{\mathbb{R}^n_-} \frac{dx}{|x-e_n|^{2 n-4 s+2 \alpha_1}} \int_{\{|x-y|>\frac
12\}} \frac{dy}{|x-y|^{n+2 s-2 \alpha_1}}.
    \end{align}
    If $\alpha_1<s$ the integral with respect to the variable $y$ in \eqref{I1} is convergent, and hence (up to relabelling the constant) \[
    I_1\leq c_{n,s}\int_{\mathbb{R}^n_-} \frac{dx}{|x-e_n|^{2 n-4 s+2 \alpha_1}},
    \]and this last integral is finite if and only if $1+n-4s+2\alpha_1>1$, which is true if we can choose $\alpha_1>2s-\frac{n}{2}$. We can find  $\alpha_1$ that fulfills both conditions if \[
    2s-\frac{n}{2}<s\iff2s<n,
    \]which is true by assumption. To estimate $I_2$ we argue in the same way but by choosing $s<\alpha_2<1$. In this way, we get \[
    I_2\leq c_{n,s}\int_{\mathbb{R}^n_-} \frac{dx}{|x-e_n|^{2 n-4 s+2 \alpha_2}} \int_{\{|x-y|\leq\frac
12\}} \frac{dy}{|x-y|^{n+2 s-2 \alpha_2}}.
    \]Since $\alpha_2>s$, the integral with respect to the variable $y$ is convergent and hence  $$I_2\leq c_{n,s}\int_{\mathbb{R}^n_-} \frac{dx}{|x-e_n|^{2 n-4 s+2 \alpha_2}},$$ which is again convergent since $n>2s$.
    
	Let \[
		\widetilde U _t(x)= \Bigg(\frac{1}{\frac{1}{t^2}+\left|\frac xt- e_n\right|^2}\Bigg)^{\frac{n-2s}{2}}
		,\]we observe that, arguing again as in \cite[page 263]{LL}, we get\begin{align*}
    \frac{\big(\widetilde U_t(x)-\widetilde U_t(y)\big)^2}{|x-y|^{n+2s}}&\leq c_{n,s}\frac{\left(|x-e_n|^2-|y-e_n|^2\right)^{2\alpha}}{|x-y|^{n+2s}}\max\left\{\frac{1}{|y-e_n|^{n-2s+2\alpha}},\frac{1}{|x-e_n|^{n-2s+2\alpha}}\right\}^2 \\&\colon=V(x,y).
    \end{align*}Hence, in order to apply the {dominated convergence theorem} to \eqref{numerator} it is enough to prove that $V$ is in $L^1((\R^n_-)^2)$ and, by symmetry, it is enough to show that $V$ belongs to $L^1(D)$, where $$D=(\R^n_-)^2\cap \{|y-e_n|<|x-e_n|\}.$$ If $(x,y)\in D$ we have \begin{align}\label{estimate}
   \nonumber V(x,y)&=c_{n,s}\frac{\left(|x-e_n|^2-|y-e_n|^2\right)^{2\alpha}}{|x-y|^{n+2s}}\frac{1}{|y-e_n|^{2n-4s+4\alpha}}\\&\leq  c_{n,s}\frac{\left(|y-e_n||x-y|+|x-y|^2\right)^{2\alpha}}{|x-y|^{n+2s}}\frac{1}{|y-e_n|^{2n-4s+4\alpha}}.
    \end{align}Now we observe that \begin{align*}
       \iint_{D\cap\{|x-y|<\frac{1}{2}\}}V(x,y)dxdy\leq c_{n,s,\alpha}\int_{\R^n_-}\frac{dy}{|y-e_n|^{2n-4s+2\alpha}}\int_{\{|x-y|<\frac{1}{2}\}}\frac{dx}{|x-y|^{n+2s-2\alpha}}.
    \end{align*}In order to have convergence of this integral it is enough to choose $\alpha:=\alpha_3\in(s,1).$ Splitting the integral over $D\cap\{|x-y|\geq\frac{1}{2}\}$ into the regions where $|y-e_n|\leq |x-y|$ and where $|y-e_n|\geq |x-y|$ and using estimate \eqref{estimate} twice we find \begin{equation}\label{last two integrals}\begin{aligned}
    \iint_{D\cap\{|x-y|\ge\frac{1}{2}\}}V(x,y)dxdy\leq &c_{n,s,\alpha_4}\int_{\R^n_-}\frac{dy}{|y-e_n|^{2n-4s+4\alpha_4}}\int_{\{|x-y|\ge\frac{1}{2}\}}\frac{dx}{|x-y|^{n+2s-4\alpha_4}}\\
    &+c_{n,s,\alpha_5}\int_{\R^n_-}\frac{dy}{|y-e_n|^{2n-4s+2\alpha_5}}\int_{\{|x-y|\geq\frac12\}}\frac{dx}{|x-y|^{n+2s-2\alpha_5}}.
    \end{aligned}\end{equation}If we choose a positive $\alpha_4\in (s-\frac{n}{4},\frac{s}{2})$ the first two integrals in \eqref{last two integrals} are convergent, furthermore if we choose a positive $\alpha_5\in(2s-\frac{n}{2},s)$ the other two integrals are also convergent (note that the two intervals are non-empty since $n>2s$). The above estimates show that $V\in L^1(D)$ and conclude the proof.
\end{proof}

We end this {section} with the proof of Theorem \ref{existence-critical}.
\begin{proof}[Proof of Theorem \ref{existence-critical}]
The result follows by combining Theorem \ref{strict inequality} with \cite[Theorem 1.3]{MN}.

\end{proof}
\section{Decay of solutions}\label{sec 3 lane emden}
In this section, we will need to consider pointwise, and not just weak, solutions to problem \eqref{P}. For this, we recall that the fractional Laplacian $(-\Delta)^s$ is well defined for functions {$u\in \mathcal L_s\cap C^{2s+\alpha}$} for some $\alpha >0$, where the space  $\mathcal L_s$ is defined as
\begin{equation}\label{L_s}
\mathcal L_s:=\left\{u:\R^n\to \R\,:\,\int_{\R^n}\frac{|u(x)|}{1+|x|^{n+2s}}\,dx <\infty\right\}.
\end{equation}

The following inclusion has been essentially shown in \cite[Lemma 2.3]{CC2020}.
\begin{lemma}\label{L-s}
We have:
$$ H^s_{\R^n_+} \subset \mathcal L_s.$$
\end{lemma}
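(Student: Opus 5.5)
We must show that every $u\in H^s_{\R^n_+}$ satisfies $\int_{\R^n}\frac{|u|}{1+|x|^{n+2s}}\,dx<\infty$. We split the integral into the parts over $\R^n_+$ and $\R^n_-$.

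\emph{The part over $\R^n_+$.} Since $u\in L^{2^*_s}(\R^n_+)$, Hölder's inequality gives
\[
\int_{\R^n_+}\frac{|u|}{1+|x|^{n+2s}}\,dx\le \|u\|_{L^{2^*_s}(\R^n_+)}\,\big\|(1+|x|^{n+2s})^{-1}\big\|_{L^{q}(\R^n)},\qquad q=\frac{2n}{n+2s}.
\]
The last norm is finite because $(n+2s)q=2n>n$.

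\emph{The part over $\R^n_-$.} Here we only know that the cross term of $\E(u,T(\R^n_+))$ is finite. Fix the set $A=B_1(2e_n)\cap\R^n_+$, which has positive measure and stays at distance at least $1$ from $\R^n_-$. Since $u\in L^{2^*_s}(A)\subset L^1(A)$, we may set $m=\frac1{|A|}\int_A u$. For $x\in\R^n_-$ we have $|u(x)-m|\le \frac1{|A|}\int_A|u(x)-u(y)|\,dy$, hence
\[
|u(x)|\le |m|+\frac1{|A|}\int_A|u(x)-u(y)|\,dy .
\]
For $x\in\R^n_-$ and $y\in A$ one has $|x-y|\le |x|+3\le 3(1+|x|)$. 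Combined with Cauchy--Schwarz in $y$ this gives
\[
\int_A|u(x)-u(y)|\,dy\le |A|^{1/2}\,\big(3(1+|x|)\big)^{\frac{n+2s}{2}}\Big(\int_A\frac{(u(x)-u(y))^2}{|x-y|^{n+2s}}\,dy\Big)^{1/2}.
\]
Multiplying by $(1+|x|^{n+2s})^{-1}\le c(1+|x|)^{-(n+2s)}$, integrating over $\R^n_-$ and applying Cauchy--Schwarz in $x$, we obtain
\[
\int_{\R^n_-}\frac{|u|}{1+|x|^{n+2s}}\,dx\le c|m|+c\Big(\int_{\R^n_-}(1+|x|)^{-(n+2s)}\,dx\Big)^{1/2}\E(u,\R^n_-\times\R^n_+)^{1/2}<\infty,
\]
since $n+2s>n$.

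The only delicate step is the choice of a reference set $A$ at positive distance from $\R^n_-$. This keeps the kernel comparable to $(1+|x|)^{-(n+2s)}$ uniformly in $x\in\R^n_-$, and that uniform comparison is what makes the final weighted integral converge. The argument is essentially that of \cite[Lemma 2.3]{CC2020}.
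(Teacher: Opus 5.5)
Your proof is correct. It differs from the paper's in a small but genuine way.

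The paper, following \cite{CC2020}, first proves the stronger bound $\int_{\R^n}\frac{|u|^2}{1+|x|^{n+2s}}\,dx<\infty$ and only then passes to $\mathcal L_s$ by Cauchy--Schwarz. On $\R^n_-$ it integrates the pointwise inequality $|a-b|^2\ge \frac12 a^2-b^2$ over $\Omega\times\R^n_-$. Here $\Omega$ is a compact reference set; the paper writes it as a subset of $\R^n_-$, but the argument needs $\Omega\subset\R^n_+$. The paper then uses $|x-y|\le d+|x|$. In that argument the positive distance between $\Omega$ and $\R^n_-$ is essential: it makes the subtracted term $\int_\Omega\int_{\R^n_-}\frac{u(y)^2}{|x-y|^{n+2s}}\,dx\,dy\le C\omega^{-2s}\int_\Omega u^2$ finite.

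You instead subtract the average $m$ of $u$ over $A$ and apply Cauchy--Schwarz twice. This gives the weighted $L^1$ bound directly and produces no such error term. So your closing remark misidentifies the key point. Your estimate only uses an upper bound $|x-y|\le C_A(1+|x|)$, i.e. a lower bound on the kernel, which holds for any bounded $A$. The distance $\ge 1$ from $\R^n_-$ is never invoked. What you actually need is that $A$ is bounded, so that $m$ is finite and the kernel bound holds, and that $A\subset\R^n_+$, so that $\R^n_-\times A\subset T(\R^n_+)$.

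Squaring your pointwise bound gives $|u(x)|^2\le 2m^2+\frac{2}{|A|}(3(1+|x|))^{n+2s}\int_A\frac{(u(x)-u(y))^2}{|x-y|^{n+2s}}\,dy$. This recovers the paper's weighted $L^2$ estimate as well, so your route loses nothing and is slightly more economical.
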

\begin{proof}

%

We recall that if $u\in H^s_{\R^n_+}$, then it satisfies:

\begin{equation}\label{good}
    u\in L^{2^*_s}(\R^n_+) \quad \mbox{and}\quad \iint_{\R^{2n}\setminus(\R^n_-)^2}\frac{|u(x)-u(y)|^2}{|x-y|^{n+2s}}dx\,dy <\infty.
\end{equation}

The proof follows the lines of \cite[Lemma 2.3]{CC2020}. We report it here, for the reader's convenience.

We show that condition \eqref{good} implies
\begin{equation}\label{L_2}
\int_{\R^n}\frac{|u(x)|^2}{1+|x|^{n+2s}}\,dx<\infty,
\end{equation}
which, in particular, implies that $u \in \mathcal L_s$, by using the H\"older inequality and observing that $(1+|x|^{n+2s})^{-1}\in L^1(\R^n)$. 

Throughout this proof we denote by $C$ many different positive constants whose precise value is not important for the goal of the proof and may change from line to line.
Let $\Omega$ be a compact set contained in $\R^n_-$. We have
\begin{equation}\label{chain1}
\begin{split}
\infty &>\int_{\R^n_+}\int_{\R^n }\frac{|u(x)-u(y)|^2}{|x-y|^{n+2s}}\,dx\,dy  \\
& \ge\iint_{(\R^n_+)^2}\frac{|u(x)-u(y)|^2}{|x-y|^{n+2s}}\,dx\,dy + \int_{\Omega} \int_{\R^n_-}\frac{|u(x)-u(y)|^2}{|x-y|^{n+2s}}\,dx\,dy \\
& \ge \iint_{(\R^n_+)^2} \frac{|u(x)-u(y)|^2}{|x-y|^{n+2s}}\,dx\,dy \\
&\hspace{2em}+ \frac{1}{2}\int_{\Omega} \int_{\R^n_-}\frac{|u(x)|^2}{|x-y|^{n+2s}}\,dx\,dy -\int_{\Omega} \int_{\R^n_-}\frac{|u(y)|^2}{|x-y|^{n+2s}}\,dx\,dy,
\end{split}
\end{equation}
where, in the last estimate we used that $|a-b|^2 \ge \frac{1}{2} a^2-b^2$ by the Young inequality.

Since $u$ satisfies \eqref{good}, clearly the first term on the right-hand side is finite. Moreover, using that for $x\in \R^n_-$ and $y \in \Omega$ one has that $|x-y|\ge \omega$, for some $\omega>0$, and the integrability of the kernel at infinity, we have for every $y\in\Omega$
$$
\int_{\R^n_-}\frac{1}{|x-y|^{n+2s}}\,dx\le C\int_{\omega}^\infty \tau^{n-1-(n+2s)} d\tau=\frac{C}{\omega^{2s}},
$$
where $C$ is independent of $y\in\Omega$.
Hence, 

\begin{equation}\label{chain1ba}
\begin{split}
\int_{\Omega} \int_{\R^n_-}\frac{|u(y)|^2}{|x-y|^{n+2s}}\,dx\,dy&= \int_{\Omega} |u(y)|^2\bigg(\int_{\R^n_-}\frac{1}{|x-y|^{n+2s}}\,dx\bigg)\,dy\\
&\le  \frac{C}{\omega^{2s}}\int_{\Omega}|u(y)|^2 dy < \infty.
\end{split}
\end{equation}

Therefore, combining \eqref{chain1} with \eqref{chain1ba}, we deduce that
$$
\int_{\Omega} \int_{\R^n_-}\frac{ |u(x)|^2}{|x-y|^{n+2s}}\,dx\,dy < \infty.
$$

Finally, since $\Omega$ is bounded,  we have that there exists some number $d$ such that $|x-y|\le d+ |x|$ for every $x \in \R^n_-$ and $y \in \Omega$, which implies that
\begin{equation}\label{norm in R-}
\int_{\Omega} \int_{\R^n_-}\frac{ |u(x)|^2}{|x-y|^{n+2s}}\,dx\,dy\ge |\Omega| \int_{\R^n_-}\frac{ |u(x)|^2}{(d+|x|)^{n+2s}}\,dx.\end{equation}

Moreover, since $u\in L^{2^*_s}(\R^n_+)$ and $(d+|x|)^{-n-2s}\in L^1(\R^n_+)$, by using as before the H\"older inequality, we also have that 
$$\int_{\R^n_+}\frac{|u(x)|^{2}}{(d+|x|)^{n+2s}}dx<\infty,$$
which, together with \eqref{norm in R-}, gives the conclusion.

\end{proof}

We can now prove our decay estimates.

\begin{lemma}\label{decay}
Let $s\in(0,1)$, $n>2s$, and $1<p<\frac{n+2s}{n-2s}$. Assume that $u\in H^s_{\R^n_+}\cap L^\infty(\R^n_+)$ is a weak positive solution to
\begin{equation}\label{subcritical}\begin{cases}
(-\Delta)^s u=u^p &\text{in } \mathbb{R}_{+}^n \\
\mathcal{N}_s u=0 &\text{in } \mathbb{R}_{-}^n.
\end{cases}\end{equation}

Then, there exists a constant $C$ that depends on $n,\,s$ and $p$, such that 
\[
u(x)+|\nabla u(x)|^{\frac{2s}{2s+p-1}}\leq {C}\,{x_n^{-\frac{2s}{p-1}}}\quad\text{for all $x\in\R^n_+.$}
\]
\end{lemma}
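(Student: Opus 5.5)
We argue by contradiction, following the blow-up scheme of Pol\'a\v{c}ik, Quittner and Souplet built on their Doubling Lemma \cite[Lemma 5.1]{PQS}. First, regularity. Since $u\in H^s_{\R^n_+}\subset\mathcal L_s$ by Lemma \ref{L-s} and $u$ is bounded in $\R^n_+$, the condition $\mathcal N_s u=0$ gives an explicit representation in $\R^n_-$:
\[
u(x)=\frac{\int_{\R^n_+}u(y)|x-y|^{-n-2s}\,dy}{\int_{\R^n_+}|x-y|^{-n-2s}\,dy},\qquad x\in\R^n_-.
\]
This is a weighted average of the values of $u$ in $\R^n_+$, so $\|u\|_{L^\infty(\R^n)}\le\|u\|_{L^\infty(\R^n_+)}$. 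Interior regularity for $(-\Delta)^s u=u^p$, applied to a bounded $\mathcal L_s$ function with a bootstrap, then shows that $u$ is a pointwise $C^{1,\alpha}$ (indeed $C^{2s+\alpha}$) solution in $\R^n_+$. Hence the quantity
\[
M(x):=u(x)^{\frac{p-1}{2s}}+|\nabla u(x)|^{\frac{p-1}{2s+p-1}}
\]
is continuous on $\R^n_+$. The claimed estimate is equivalent to $M(x)\le C\,x_n^{-1}$.

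Suppose the estimate fails. Then there are solutions $u_k$ and points $y_k\in\R^n_+$ with $M_k(y_k)>2k\,\mathrm{dist}(y_k,\partial\R^n_+)^{-1}$. We apply the Doubling Lemma with $X=\overline{\R^n_+}$, $\Gamma=\partial\R^n_+$ and $D=\R^n_+$. It produces points $x_k$ such that
\[
M_k(x_k)\ge M_k(y_k),\qquad M_k(x_k)>2k\,(x_k)_n^{-1},\qquad M_k\le 2M_k(x_k)\ \text{on } B(x_k,k/M_k(x_k)).
\]
Set $\lambda_k=M_k(x_k)^{-1}\to0$ and rescale:
\[
v_k(z)=\lambda_k^{\frac{2s}{p-1}}\,u_k(x_k+\lambda_k z).
\]
By scaling invariance, $v_k$ solves $(-\Delta)^s v_k=v_k^p$ in $\{z_n>-(x_k)_n/\lambda_k\}$, a half-space whose boundary recedes to infinity since $(x_k)_n/\lambda_k>2k$. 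The rescaled $M$ equals $1$ at $z=0$ and is at most $2$ on $B(0,k)$. So $v_k$ and $\nabla v_k$ are uniformly bounded on $B(0,k)$, which lies in the half-space.

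The main obstacle is the nonlocal tail. To pass to the limit we need control of $v_k$ far away: on the rest of the half-space and in the reflected region where $v_k$ is given by the Neumann representation. Only local bounds come from doubling. My plan is to use the $L^\infty$ assumption together with the weighted-average representation to control the tail. The key estimate I would aim for is that the tail contribution
\[
\int_{|z|>k}\frac{v_k(z)}{|z|^{n+2s}}\,dz
\]
stays bounded, or tends to a harmless limit. Proving this uniformly is where I expect difficulty, since $\lambda_k^{2s/(p-1)}\|u_k\|_\infty$ need not be small. I would first try positivity: $(-\Delta)^s v_k\ge0$ with $v_k\ge0$ gives a weak Harnack/mean-value type bound of the tail integral by $v_k(0)$ plus the local sup, which is bounded. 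Once the tails are controlled uniformly, the uniform local bounds give $C^{2s+\alpha}_{loc}$ estimates, and a subsequence converges locally uniformly, with gradients, to a nonnegative entire solution $v$ of
\[
(-\Delta)^s v=v^p\ \text{in } \R^n,\qquad v\in\mathcal L_s.
\]
The normalization persists in the limit: $v(0)^{\frac{p-1}{2s}}+|\nabla v(0)|^{\frac{p-1}{2s+p-1}}=1$. If $v(0)=0$, the strong maximum principle forces $v\equiv0$, contradicting $|\nabla v(0)|=1$. If $v(0)>0$, then $v$ is a positive entire solution with $1<p<\frac{n+2s}{n-2s}$, contradicting the Liouville theorem of \cite{BCPS,JLX}. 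This proves the estimate, with $C$ depending only on $n,s,p$.
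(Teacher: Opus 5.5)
Your overall scheme is the same as the paper's: doubling lemma, the rescaling $v_k(z)=\lambda_k^{\frac{2s}{p-1}}u_k(x_k+\lambda_k z)$, local compactness, and the Liouville theorem of \cite{JLX}. However, the passage to the limit in the nonlocal equation has a genuine gap.

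You only aim to show that the tails $\int_{B_R^c}v_k(z)|z|^{-n-2s}\,dz$ stay bounded, and then assert that the local limit $v$ solves $(-\Delta)^s v=v^p$ in $\R^n$. Bounded tails together with $C^{2s+\alpha}_{\mathrm{loc}}$ convergence do not give this, because part of the weighted mass of $v_k$ can escape to infinity. What one actually obtains is \cite[Theorem 1.1]{Du-Jin-Xiong-Yang}, which the paper uses: $(-\Delta)^s v_k(x)\to(-\Delta)^s v(x)-b$, with $b=c_{n,s}\lim_{R\to\infty}\lim_{k\to\infty}\int_{B_R^c}v_k(z)|z|^{-n-2s}\,dz\ge 0$. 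Thus $v$ solves $(-\Delta)^s v=v^p+b$. For $b>0$ this makes $v$ only a supersolution of the Lane--Emden equation, which \cite{JLX} does not cover, so your concluding contradiction is not yet available. The paper closes this by showing $b=0$. It compares $v$ with the truncated Riesz potentials $T_R$ of $(v^p+b)\chi_{B_R}$ through a maximum principle, which gives $v(x)\ge c_{n,s}\int_{\R^n}b\,|x-y|^{2s-n}\,dy=+\infty$ if $b>0$.

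Your positivity idea can close the gap, but only if you run it at every scale $R$ rather than at unit scale. We have $v_k\ge0$ on all of $\R^n$ (by the Neumann representation) and $(-\Delta)^s v_k=v_k^p\ge0$ in $B_k$. Test the equation against $\eta(\cdot/R)$, with $\eta\in C^\infty_c(B_1)$, $0\le\eta\le1$, $\eta=1$ on $B_{1/2}$, and $2R<k$. Using $|(-\Delta)^s[\eta(\cdot/R)](z)|\le CR^{-2s}$ everywhere and $|(-\Delta)^s[\eta(\cdot/R)](z)|\ge cR^{n}|z|^{-n-2s}$ for $|z|\ge2R$, you get
\[
\int_{B_{2R}^c}\frac{v_k(z)}{|z|^{n+2s}}\,dz\le CR^{-2s}\sup_{B_{2R}}v_k\le CR^{-2s}\quad\text{uniformly in } k.
\]
This uniform smallness of the tails, not mere boundedness, is what forces $b=0$. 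It also supplies the uniform $\mathcal L_s$ bounds that the interior $C^{2s+\alpha}$ estimates need.

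A minor point: $\lambda_k\to0$ does not follow from the doubling lemma (only $(x_k)_n/\lambda_k>2k$ does), but you never use it.
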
\begin{proof}Assume by contradiction that there exists a sequence of $u_k\in H^s_{\R^n_+}$ of solutions to \eqref{subcritical}, and a sequence of points $x^k\in \R^n_+$ such that\[
M_k(x^k)\geq\frac{2k}{x_n^k}
\]where\[
M_k:=u_k^{\frac{p-1}{2 s}}+|\nabla u_k|^{\frac{p-1}{2s+p-1}}.
\]By the Doubling Lemma \cite[Lemma 5.1]{PQS} there exists a sequence $y^k\in\R^n_+$ such that \[
M_k(y^k) y_n^k \geq 2 k,
\]and \[
M_k(z) \leq 2 M_k(y_k)^{}\quad\text{if $z\in B_{k\lambda_k}(y_k)$}
\]where $\lambda_k=M_k(y^k)^{-1};$ notice that $B_{k\lambda_k}(y_k)\subset\R^n_+$.
Now, for any $x\in \R^n,$ we define \[\widetilde{u}_k(x)=\lambda_k^{\frac{2s}{p-1}} u_k(y^k+\lambda_k x),\]and we notice that $\widetilde{u}_k$ solves \begin{equation}\label{equation in a ball}
(-\Delta)^s \widetilde{u}_k=\widetilde{u}_k^p \quad  \text{in } B_k(0);
\end{equation}moreover \begin{equation}\label{bound in 0 sequence}
\widetilde{u}_k(0)^{\frac{p-1}{2s}}+|\nabla \widetilde u_k(0)|^{\frac{p-1}{2s+p-1}}=1\end{equation} and \begin{equation}\label{bound in a ball}\widetilde{u}_k(x)^{\frac{p-1}{2s}}+|\nabla \widetilde u_k(x)|^{\frac{p-1}{2s+p-1}}\leq 2\quad\text{if $x\in B_k(0)$}.
\end{equation}
From \eqref{bound in a ball}, the Ascoli--Arzelà Theorem and \cite[Theorem 1.1 and Theorem 1.3]{chenLiWuXin} we have that, up to subsequences, $\widetilde u_k$ converges in {$C^{\max\{1,2s\}+\alpha}_{\text{loc}}(\R^n)$ } to some non-negative function $u\in C^1(\R^n).$ Observe that we crucially use that the regularity estimates of  \cite[Theorem 1.1 and Theorem 1.3]{chenLiWuXin} only depend on the $L^\infty$-norm of $u$ in $B_k(0)$ and not in the whole $\R^n$. Hence, passing to the limit in \eqref{bound in 0 sequence} and \eqref{bound in a ball} we find \begin{equation}\label{bound in 0 limit}
u(0)^{\frac{p-1}{2s}}+|\nabla  u(0)|^{\frac{p-1}{2s+p-1}}=1\end{equation} and \begin{equation}u(x)^{\frac{p-1}{2s}}+|\nabla  u(x)|^{\frac{p-1}{2s+p-1}}\leq 2\quad\text{if $x\in\R^n$}.\label{bound limit}
\end{equation}

We want now to pass to the limit in the equation \eqref{equation in a ball}. {The proof takes inspiration from \cite[Proof of Theorem 1.8.]{chenLiWuXin}.}
{Since $\widetilde u_k \in C^{2s + \alpha}\cap \mathcal L_s$ (see Lemma \ref{L-s}), we have that $\widetilde u_k$ solves equation \eqref{equation in a ball} pointwise and therefore $(-\Delta)^s \widetilde u_k$ converges pointwise in $\R^n$.} Hence from \cite[Theorem 1.1]{Du-Jin-Xiong-Yang} we have \[
\lim _{k \rightarrow \infty}(-\Delta)^s \widetilde u_k(x)=(-\Delta)^s u(x)-b \quad \text{for all } x\in \mathbb{R}^n,
\]where $b\in[0,+\infty)$ is defined as\begin{equation}\label{tail}
b=c_{n, s} \lim _{R \rightarrow \infty} \lim _{k \rightarrow \infty} \int_{B_R^c} \frac{\widetilde u_k(x)}{|x|^{n+2 s}} d x.
\end{equation}Therefore $u$ solves \begin{equation}\label{equation with b}
(-\Delta)^su(x)=u^p(x)+b\quad\text{for all $x\in\R^n.$ }
\end{equation}We want to show that $b=0.$ If, by contradiction, $b>0$ we define for any $R>0$ \[
T_R(x)=c_{n,s}\int_{\R^n}\frac{(u^p(y)+b)\chi_{B_R}(y)}{|x-y|^{n-2s}}dy,
\]
that is a solution of 
\[
\begin{cases}
    (-\Delta)^sT_R(x)&=(u^p(x)+b)\chi_{B_R}(x)\quad\text{$x\in\R^n$}\\
   {\displaystyle \lim_{|x|\to+\infty}T_R(x)}&=0,
\end{cases}
\]
where $\chi_{B_R}$ denotes the characteristic function of the ball $B_R$ and $1/|x-y|^{n-2s}$ is the fundamental solution of the $s$-Laplacian on $\R^n$.

Now we define $u_R(x)=u(x)-T_R(x)$ and we see that \[
\begin{cases}
(-\Delta)^s u_R(x) &\geq 0 \quad x \in \mathbb{R}^n, \\
{\displaystyle\lim _{|x| \rightarrow +\infty} u_R(x)} &\geq 0.
\end{cases}
\]
Hence, from the maximum principle of \cite[Theorem 1]{CHEN2018735} we have that $u_R$ is non--negative. From this and the Monotone Convergence Theorem we obtain \[
u(x)\geq \lim_{R\to+\infty}T_R(x)\geq c_{n,s}\int_{\R^n}\frac{u^p(y)+b}{|x-y|^{n-2s}}dy\geq c_{n,s}\int_{\R^n}\frac{b}{|x-y|^{n-2s}}dy,
\] for any $x\in\R^n$. If $b>0$ the last integral on the right hand side is divergent, which is a contradiction and hence $b=0.$ From \eqref{equation with b} we find that $u$ is a non--negative solution of \[
(-\Delta)^su(x)=u^p(x)\quad\text{for all $x\in\R^n,$}
\]and hence, from \cite[Theorem 1.8 and Remark 1.9]{JLX}, $u$ must vanish everywhere, which is not possible thanks to \eqref{bound in 0 limit}.
\end{proof}
Given a point $x\in\R^n$ we write $x=(x',x_n)$ with $x'\in\R^{n-1}$ and $x_n\in\R.$
\begin{lemma}\label{doubling-derivative}
  Let $s\in(0,1)$, $n>2s$, and $1<p<\frac{n+2s}{n-2s}$. Assume that $u\in H^s_{\R^n_+}\cap L^\infty(\R^n_+)$ is a weak positive solution to \eqref{subcritical}.

Then, there exists a constant $C_2$ that depends on $n,\,s$ and $p$, such that
\[
|\nabla u(x)|\leq \frac{C_2}{x_n}\quad\text{for any $x\in\R^n_+$ with $0<x_n\leq 1.$}
\]
\end{lemma}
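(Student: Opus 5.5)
The plan is a scaling argument at the natural boundary scale $r=x_n/2$, rather than at the intrinsic scale $M_k(y^k)^{-1}$ used in Lemma \ref{decay}. Note first that Lemma \ref{decay} alone is not enough. It gives $|\nabla u(x)|\le C x_n^{-1-\frac{2s}{p-1}}$, which near $\partial\R^n_+$ is weaker than the claimed $C_2/x_n$. So the improvement must come from interior regularity applied to $u$ on balls of radius comparable to $x_n$, and this requires an $L^\infty$ bound on $u$ in the strip $\{0<x_n\le 2\}$ that does not degenerate as $x_n\to0$.

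\emph{Step 1: a uniform bound for $u$ near the boundary.} I would first show that $\sup_{\{0<y_n\le 2\}}u\le C$. The first attempt is to use the Neumann condition $\mathcal N_s u=0$. It gives the representation
\[
u(x)=\frac{\int_{\R^n_+}u(y)|x-y|^{-n-2s}\,dy}{\int_{\R^n_+}|x-y|^{-n-2s}\,dy},\qquad x\in\R^n_-,
\]
so $u$ in $\R^n_-$ is a weighted average of its values in $\R^n_+$, and in particular $\sup_{\R^n_-}u\le\sup_{\R^n_+}u$. I would combine this with a blow-up at points where $u$ is large close to the boundary. Rescaling by $u(y^k)^{-(p-1)/(2s)}$ and using the reflection-type structure given by the representation above, one should get in the limit either a bounded nonnegative nontrivial solution of $(-\Delta)^s w=w^p$ in $\R^n$, or a solution of the half-space Neumann problem on a fixed half-space. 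In the first case \cite[Theorem 1.8]{JLX} and the argument with the tail constant $b$ from the proof of Lemma \ref{decay} rule it out. The second case is the main obstacle of this step. If it cannot be closed, the fallback is to let the constant depend also on $\|u\|_{L^\infty(\R^n_+)}$, which is finite by assumption.

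\emph{Step 2: rescaling at scale $x_n$.} Fix $x$ with $0<x_n\le1$, set $r=x_n/2$, and define $v(z)=u(x+rz)$. Then $(-\Delta)^s v=r^{2s}v^p$ in $B_1(0)\subset\R^n_+$-coordinates, and $|r^{2s}v^p|\le C$ there by Step 1. The gradient bound then follows from the interior estimate of \cite[Theorem 1.1 and Theorem 1.3]{chenLiWuXin}:
\[
|\nabla v(0)|\le C\bigl(\|v\|_{L^\infty(B_1)}+\|r^{2s}v^p\|_{L^\infty(B_1)}+\mathrm{Tail}(v)\bigr).
\]
Since $\nabla v(0)=r\nabla u(x)$, this gives $|\nabla u(x)|\le C/r=2C/x_n$.

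\emph{Step 3: the tail term.} It remains to bound $\mathrm{Tail}(v)=r^{2s}\int_{|y-x|>r}u(y)|x-y|^{-n-2s}\,dy$ uniformly. I would split $\R^n$ into three regions:
\begin{enumerate}[(i)]
\item the strip $\{|y_n|\le 2\}$, where $u$ is bounded by Step 1 together with $\sup_{\R^n_-}u\le\sup_{\R^n_+}u$;
\item the region $\{y_n>2\}$, where Lemma \ref{decay} gives $u\le C$;
\item the region $\{y_n<-2\}$, where the weighted-average representation again bounds $u$ by $\sup_{\R^n_+}u$.
\end{enumerate}
Hence $u\le C$ on all of $\R^n$, and therefore $\mathrm{Tail}(v)\le C r^{2s}\int_{|z|>r}|z|^{-n-2s}\,dz\le C$.

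I expect Step 1, the boundary-uniform bound independent of the particular solution, to be the genuinely delicate point. The rest of the argument is routine local regularity.
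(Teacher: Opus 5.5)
Your argument is correct once Step 1 is replaced by your fallback, and it takes a genuinely different route from the paper.

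\textbf{The fallback is forced.} The bound $C_2/x_n$ is not invariant under the scaling $u_\lambda(x)=\lambda^{\frac{2s}{p-1}}u(\lambda x)$. This scaling maps positive bounded weak solutions of \eqref{subcritical} to positive bounded weak solutions, since $\R^n_+$ and $\mathcal N_s$ are dilation invariant. As a consequence, $\sup_{\{0<y_n\le 2\}}u_\lambda\ge \lambda^{\frac{2s}{p-1}}\sup_{\{0<y_n\le 2\}}u\to\infty$ as $\lambda\to\infty$. Similarly, take $y$ with $\nabla u(y)\neq 0$; such a point exists because $u$ is not constant in $\R^n_+$. Setting $x=y/\lambda$ with $\lambda\ge y_n$, you get $x_n|\nabla u_\lambda(x)|=\lambda^{\frac{2s}{p-1}}y_n|\nabla u(y)|\to\infty$.

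So neither your solution-independent Step 1 nor a constant $C_2$ depending only on $n,s,p$ can exist once a single solution exists. Drop the blow-up plan. The paper's proof is also run for one fixed $u$ and uses $\|u\|_{L^\infty(\R^n_+)}$ to get compactness of its rescalings, so it too only yields $C_2=C_2(u)$. That is all that Lemma \ref{energy expansion-v2} needs.

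\textbf{Consequences for Steps 2 and 3.} With $M:=\|u\|_{L^\infty(\R^n_+)}$, the Neumann representation gives $0<u\le M$ on all of $\R^n$. Step 3 then reduces to one line.

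Step 2 needs one technical correction. For $2s\le 1$, an $L^\infty$ right-hand side only gives interior $C^{2s}$ regularity (log-Lipschitz when $s=\frac12$), not a gradient bound. So your displayed estimate is not available as stated. You need a finite bootstrap: $r^{2s}v^p$ inherits the H\"older regularity of $v$ with constants depending on $M$ and $p$, because $r\le\frac12$ and $t\mapsto t^p$ is Lipschitz on $[0,M]$. Iterate until $\|v\|_{C^{1,\alpha}(B_{1/4})}\le C(n,s,p,M)$. Then $|\nabla u(x)|=r^{-1}|\nabla v(0)|\le 2C/x_n$.

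\textbf{Comparison with the paper.} The paper argues by contradiction:
\begin{enumerate}[(a)]
\item It applies the Doubling Lemma of \cite{PQS} to $|\nabla u|$ at the intrinsic scale $\lambda_k=|\nabla u(x^k)|^{-1}$.
\item It rescales without amplitude normalization, $v_k(y)=u(x^k+\lambda_k y)$.
\item It passes to the limit using \cite{chenLiWuXin} and \cite{Du-Jin-Xiong-Yang}, and shows the tail constant $b$ vanishes by the maximum-principle argument of Lemma \ref{decay}.
\item It reaches a bounded $s$-harmonic function on $\R^n$ with $|\nabla v(0)|=1$, contradicting the Liouville theorem.
\end{enumerate}
Your version rescales at the geometric scale $x_n/2$ and uses only interior estimates plus the global bound coming from the Neumann condition. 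It avoids the doubling lemma, the limiting procedure with the tail constant, and the Liouville theorem. It also gives an explicit dependence $C_2=C(n,s,p,\|u\|_{L^\infty(\R^n_+)})$. The paper's route is non-quantitative, and its only advantage is that it recycles the blow-up machinery already set up for Lemma \ref{decay}.
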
\begin{proof}
    The proof again relies on the Doubling Lemma of \cite[Lemma 5.1]{PQS}. Indeed assume by contradiction the existence of a sequence $y^k\in \R^n_+$ with $y_n^k\leq 1$ such that \[
    |\nabla u(y_k)|\geq \frac{2k}{y_n^k}.
    \]By the Doubling Lemma there exists a sequence $x^k\in\R^n_+$ with $x_n^k\leq 1$ such that \[
    |\nabla u(x^k)|\geq \frac{2k}{x^k_n}\text{,}\quad |\nabla u(x^k)|\geq|\nabla u(y^k)|
    \]and \[
    |\nabla u(z)|\leq2|\nabla u(x^k)|\quad \text{for all $z\in B(x^k,k\lambda_k)$,}
    \]where $\lambda_k=[|\nabla u(x^k)|]^{-1}.$ Notice that, since $x_n^k\in (0,1]$, $\lambda_k\to 0.$ Now we define, for every $y\in B(0,k)$ the function \[
    v_k(y)=u(x_k+\lambda_ky)
    \]and we notice that \[
    |\nabla v_k(y)|\leq 2\quad \text{,}\quad  |\nabla v_k(0)|=1
    \]and \[
    (-\Delta)^sv_k(y)=\lambda_k^{2s}v_k^p(y)\quad\text{in $B(0,k)$}.
    \]Arguing as in the proof of Lemma \ref{decay} we find that $v_k$ converges in $C^{\max\{1,2s\}+\alpha}_{\text{loc}}(\R^n)$ to some function $v\in C^1(\R^n).$ Moreover the limit function solves \[
    (-\Delta)^sv(x)=b\quad\text{for all $x\in\R^n,$}
    \]where $b$ is defined analogously to \eqref{tail} with $v_k$ in place of $\widetilde u_k.$ Arguing again as in the proof of Lemma \ref{decay} we can see that $b=0$, hence $v$ is a bounded solution of\[
    (-\Delta)^sv(x)=0\quad\text{for all $x\in\R^n,$}
    \]therefore $v$ must be constant (see for instance \cite[Proposition 2]{ZuoChenCuiYuan}). But this is not possible since $$|\nabla v(0)|=\lim_{k\to\infty}|\nabla v_k(0)|=1.$$
    \end{proof}

The nonlocal Neumann condition $\mathcal N_s u =0$ in $\R^{n}_-$, allows us to deduce decay estimates for $u$ in the negative half-space, from the ones obtained in the positive half-space.
    
\begin{corollary}\label{decay2}
 Let $s\in(0,1)$, $n>2s$, and $1<p<\frac{n+2s}{n-2s}$. Assume that $u\in H^s_{\R^n_+}\cap L^\infty(\R^n_+)$ is a weak positive solution to \eqref{subcritical} and let \[
    \beta =\min\bigg\{1,\frac{2s}{p-1}\bigg\}.
    \]
    
    Then, there exists a constant $C_1>0$ such that \begin{equation}\label{decay function R^n-}
    u(x)\leq C_1\min\left\{1,|x_n|^{- \beta}\right\}\quad\text{for any $x\in\R^n_-$},
    \end{equation} and \begin{equation}\label{decay gradient R^n-}
    |\nabla u(x)|\leq C_1\frac{\min\left\{1,|x_n|^{- \beta}\right\}}{|x_n|}\quad\text{for any $x\in\R^n_-$}.
    \end{equation}
\end{corollary}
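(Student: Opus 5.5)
The plan is to use the Neumann condition as a representation formula. Since $\mathcal N_s u(x)=0$ for $x\in\R^n_-$, we get
\[
u(x)=\frac{\int_{\R^n_+} u(y)\,|x-y|^{-n-2s}\,dy}{\int_{\R^n_+} |x-y|^{-n-2s}\,dy},\qquad x\in\R^n_-,
\]
so $u(x)$ is a weighted average of $u$ over $\R^n_+$ with a probability kernel $K_x(y)$. A direct computation, scaling out $|x_n|$, gives
\[
\int_{\R^n_+}|x-y|^{-n-2s}\,dy=c\,|x_n|^{-2s}.
\]
Hence
\[
K_x(y)=c^{-1}|x_n|^{2s}|x-y|^{-n-2s}.
\]

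\textbf{Bound on $u$.} First, $u\le \|u\|_{L^\infty(\R^n_+)}$ because $K_x$ is a probability kernel. This gives the bound $C_1$, which handles $|x_n|\le1$. For $|x_n|\ge1$ I will use Lemma \ref{decay} in the form $u(y)\le C\min\{1,y_n^{-\gamma}\}$, where $\gamma=\frac{2s}{p-1}$ and the constant also uses the $L^\infty$ bound. Then I integrate in $y'$ first, which yields $\int_{\R^{n-1}}|x-y|^{-n-2s}\,dy'=c\,(|x_n|+y_n)^{-1-2s}$. This reduces the problem to the one-dimensional estimate
\[
u(x)\le C|x_n|^{2s}\int_0^\infty \frac{\min\{1,t^{-\gamma}\}}{(|x_n|+t)^{1+2s}}\,dt .
\]
I will split this integral at $t=|x_n|$. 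On $(0,|x_n|)$ the integrand is bounded by $|x_n|^{-1-2s}\min\{1,t^{-\gamma}\}$. Its integral is $\lesssim |x_n|^{-1-2s}\max\{1,|x_n|^{1-\gamma},\log|x_n|\}$, with the three cases $\gamma>1$, $\gamma<1$ and $\gamma=1$. After multiplying by $|x_n|^{2s}$ this gives $|x_n|^{-\min\{1,\gamma\}}$, with a log loss in the borderline case $\gamma=1$. On $(|x_n|,\infty)$ we have $t^{-\gamma}\le |x_n|^{-\gamma}$, and this part contributes $|x_n|^{-\gamma}$. Together these give the exponent $\beta=\min\{1,\gamma\}$.

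The borderline case $\gamma=1$ needs care. I would remove the logarithm by noting that when $\gamma\ge1$ we may weaken the decay to any exponent $\gamma'<1$ in the first piece; this still loses a little. More honestly, I expect to settle for $\beta$ exactly by using the first piece only with the bound $\min\{1,t^{-\gamma}\}\le \min\{1,t^{-1}\}$ on $t\ge1$, $t\le|x_n|$. If the log genuinely survives, this is the step I would double-check.

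\textbf{Bound on $\nabla u$.} The main obstacle is the gradient. I will differentiate the representation formula in $x$. Since $|x_n|>0$, the kernel is smooth in $x$ and differentiation under the integral is justified by dominated convergence. In the $x'$ directions, translation invariance gives
\[
\partial_{x_i}u(x)=\int_{\R^n_+}u(y)\,\partial_{x_i}K_x(y)\,dy,\qquad |\partial_{x_i}K_x|\le C|x_n|^{2s}|x-y|^{-n-2s-1}\le C|x-y|^{-1}K_x\le C|x_n|^{-1}K_x,
\]
because $|x-y|\ge|x_n|$. For $\partial_{x_n}$ there is an extra term from the normalization, namely $\partial_{x_n}|x_n|^{2s}=O(|x_n|^{2s-1})$, and it has the same structure. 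In both cases the derivative is bounded by $C|x_n|^{-1}\int u\,\tilde K_x$, where $\tilde K_x$ has the same form as $K_x$. Repeating the previous estimate then gives \eqref{decay gradient R^n-}. Note that this argument does not need Lemma \ref{doubling-derivative}, since it uses only bounds on $u$ in $\R^n_+$.
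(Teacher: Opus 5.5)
Your route is the paper's. The Neumann condition makes $u$ on $\R^n_-$ an average of $u|_{\R^n_+}$ against $c^{-1}|x_n|^{2s}|x-y|^{-n-2s}$; you integrate out $y'$, feed in Lemma \ref{decay}, and differentiate the formula for the gradient. The paper splits at $y_n=1$ and, after substituting $y_n=w|x_n|$, at $w=M$, rather than at $y_n=|x_n|$, which is immaterial. Your gradient step is a bit cleaner than the paper's. Since $u>0$ in $\R^n_+$ and $|x-y|\ge |x_n|+y_n\ge|x_n|$, differentiating numerator and normalization gives $|\nabla u(x)|\le C\,u(x)/|x_n|$ on $\R^n_-$. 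The gradient bound then follows at once from the bound on $u$.

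The genuine gap is the borderline case $\gamma=\frac{2s}{p-1}=1$, i.e.\ $p=1+2s$, and your proposed repair does not close it. For $\gamma=1$ the inequality $\min\{1,t^{-\gamma}\}\le \min\{1,t^{-1}\}$ is an identity, so the first piece still contains $\int_1^{|x_n|}t^{-1}\,dt=\log|x_n|$. The logarithm is not an artifact of the splitting. Suppose all you know on $\R^n_+$ is $u(y)\le C\min\{1,y_n^{-1}\}$, and take a function equal to this profile on $\R^n_+$. Its average is bounded below by a constant times
\[
|x_n|^{2s}\int_1^{|x_n|}\frac{t^{-1}\,dt}{(|x_n|+t)^{1+2s}}\ \ge\ 2^{-1-2s}\,|x_n|^{-1}\log|x_n| .
\]
So at $p=1+2s$ the method only gives $u(x)\le C|x_n|^{-1}\log|x_n|$ for $|x_n|\ge 2$. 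That yields the stated bounds with any exponent $\beta<1$, with the same loss for $\nabla u$.

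The paper's proof has exactly the same hole. It bounds $\int_{1/|x_n|}^{M}w^{-2s/(p-1)}\,dw$ by $C\big(|x_n|^{\frac{2s}{p-1}-1}+1\big)$, which is false when $\frac{2s}{p-1}=1$, since then the integral equals $\log(M|x_n|)$. The loss is harmless downstream: when $\frac{2s}{p-1}=1$ all the exponent conditions in Lemma \ref{energy expansion-v2} have room. That lemma, and hence Theorem \ref{non exitence result}, therefore go through with some $\beta<1$ close to $1$ in place of $1$. Still, as stated, with $\beta=1$ at $p=1+2s$, the corollary is not proved by either argument.
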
\begin{proof}In the following $C_{n,s}$ will denote possibly different positive constants which depend only on $n$ and $s$.
First of all we notice that, if $x\in\R^n_-$, thanks to some algebraic manipulation and a change of variables, we have \begin{equation}\begin{aligned}\label{computation cap 3}
\int_{\R^n_+}\frac{dy}{|x-y|^{n+2s}}&=\int_{\R^n_+}\frac{dy}{\left(|x'-y'|^2+(x_n-y_n)^2\right)^\frac{n+2s}2}\\&=\int_{0}^{+\infty}\frac{dy_n}{|x_n-y_n|^{n+2s}}\int_{\R^{n-1}}\frac{dy'}{\left(|\frac{x'-y'}{x_n-y_n}|^2+1\right)^{\frac{n+2s}{2}}}\\&=\int_{0}^{+\infty}\frac{dy_n}{|x_n-y_n|^{1+2s}}\int_{\R^{n-1}}\frac{dz'}{\left(|z'|^2+1\right)^{\frac{n+2s}{2}}}\\&=C_{n,s}|x_n|^{-2s}.
\end{aligned}\end{equation}

We recall that, since $\mathcal N_su =0$, one has that
$$
 u(x)=\frac{\displaystyle\int_{\R^n_+}\frac{u(y)}{|x-y|^{n+2s}}dy}{\displaystyle\int_{\R^n_+}\frac{dy}{|x-y|^{n+2s}}},\quad \mbox{for every } x \in \mathbb R^n_-.
$$
    Hence, if $u$ is a solution of \eqref{subcritical} and $x\in\R^n_-$, we have \begin{equation}\label{first integral 1d}\begin{aligned}
    |u(x)|&\leq C_{n,s}|x_n|^{2s}\int_{\R^n_+}\frac{|u(y)|}{|x-y|^{n+2s}}dy\\&=C_{n,s}|x_n|^{2s}\bigg(\int_{\{0<y_n<1\}}\frac{|u(y)|}{|x-y|^{n+2s}}dy+\int_{\{y_n\geq1\}}\frac{|u(y)|}{|x-y|^{n+2s}}dy\bigg).\end{aligned}\end{equation}
    Since $u$ is bounded we can estimate the first integral arguing as before; this time we obtain 
    \begin{align}\label{first integral estimate}
     \nonumber    \int_{\{0<y_n<1\}}\frac{|u(y)|}{|x-y|^{n+2s}}dy&\leq\|u\|_{L^\infty}\int_{\R^{n-1}}dy'\int_0^{1}\frac{dy_n}{\left(|x'-y'|^2+(x_n-y_n)^2\right)^\frac{n+2s}{2}}
      \nonumber  \\&=\|u\|_{L^\infty}\int_{\R^{n-1}}\frac{dz'}{\left(|z'|^2+1\right)^{\frac{n+2s}{2}}}\int_{0}^1\frac{1}{|\nonumber x_n-y_n|^{1+2s}}dy_n
     \nonumber    \\&=\|u\|_{L^\infty}C_{n,s}\left(\frac{1}{|x_n|^{2s}}-\frac{1}{(1+|x_n|)^{2s}}\right)
      \nonumber   \\&\leq \frac{\|u\|_{L^\infty}C_{n,s}}{|x_n|^{1+2s}}.
    \end{align}
    For the second integral in \eqref{first integral 1d} we use Lemma \ref{decay} to infer \begin{align}\label{integral y>1}
      \nonumber  \int_{\{y_n\geq1\}}\frac{|u(y)|}{|x-y|^{n+2s}}dy&\leq\int_{\{y_n\geq1\}}\frac{1}{y_n^{\frac{2s}{p-1}}|x-y|^{n+2s}}dy
    \nonumber    \\&=\int_1^{+\infty}{y_n^{-\frac{2s}{p-1}}}dy_n\int_{\R^{n-1}}\frac{dy'}{\left(|x'-y'|^2+(x_n-y_n)^2\right)^\frac{n+2s}{2}}
     \nonumber   \\&=C_{n,s}\int_{1}^{+\infty}\frac{1}{y_n^\frac{2s}{p-1}|x_n-y_n|^{1+2s}}dy_n.
    \end{align}
    Performing the change of variables $y_n=-wx_n$ in the last integral we find \begin{equation}\label{integral_con0}
   |x_n|^{2s} \int_{1}^{+\infty}\frac{1}{y_n^\frac{2s}{p-1}|x_n-y_n|^{1+2s}}dy_n=\frac{1}{|x_n|^{\frac{2s}{p-1}}}\int_{\frac{1}{|x_n|}}^{+\infty}\frac{1}{w^{\frac{2s}{p-1}}|1+w|^{1+2s}}dw.
    \end{equation}
    In order to estimate this last integral, we fix $M>1$ (here we are interested in the decay of $u$ for $|x_n|$ large, i.e. $1/|x_n|$ small) and we split the integral as \begin{align}\label{integral_cont}
   \nonumber \int_{\frac{1}{|x_n|}}^{+\infty}\frac{1}{w^{\frac{2s}{p-1}}|1+w|^{1+2s}}dz&=\int_{\frac{1}{|x_n|}}^{M}\frac{1}{w^{\frac{2s}{p-1}}|1+w|^{1+2s}}dw+\int_{M}^{+\infty}\frac{1}{w^{\frac{2s}{p-1}}|1+w|^{1+2s}}dw\\&=\int_{\frac{1}{|x_n|}}^{M}\frac{1}{w^{\frac{2s}{p-1}}|1+w|^{1+2s}}dw+C_{n,s},
   \end{align}where the last equality follows since the integral on $[M,+\infty)$ is convergent. For the second integral, 
    we have
    \[
   \int_{\frac{1}{|x_n|}}^{M}\frac{1}{w^{\frac{2s}{p-1}}|1+w|^{1+2s}}dw\le \int_{\frac{1}{|x_n|}}^{M}\frac{dw}{w^{\frac{2s}{p-1}}} \le  C_{n,s}\big(|x_n|^{\frac{2s}{p-1}-1}+1\big).
   \]
   
   Combining this last estimate with \eqref{integral_cont}, \eqref{integral_con0} and \eqref{integral y>1} we find \[
   |x_n|^{2s} \int_{\{y_n\geq1\}}\frac{|u(y)|}{|x-y|^{n+2s}}dy\leq \frac{C_{n,s}}{|x_n|^{\min\left\{\frac{2s}{p-1},1\right\}}}
   .\]This last inequality and \eqref{first integral estimate} imply \begin{equation}\label{first decay}
   u(x)\leq C_1|x_n|^{-\beta}\quad\text{for $x\in\R^n_-.$}
   \end{equation}From equations \eqref{computation cap 3}, \eqref{first integral 1d} and the fact that $u$ is bounded in $\R^n_+$ we easily get \[
   |u(x)|\leq C_1\quad \text{if $x\in\R^n_-$},
   \]which together with \eqref{first decay}, implies \eqref{decay function R^n-}.
   For the estimate in \eqref{decay gradient R^n-} it is enough to observe that, if $x\in\R^n_-$, \[
   |\nabla u(x)|\leq C_{n,s}\bigg(|x_n|^{2s-1}\int_{\R^n_+}\frac{|u(y)|}{|x-y|^{n+2s}}dy+|x_n|^{2s}\int_{\R^n_+}\frac{|u(y)|}{|x-y|^{n+2s+1}}dy\bigg)
   \]and performing the very same computation as before we reach the conclusion.
\end{proof}
\section{Pohozaev Identity and Liouville-type result}\label{sec 4 lane emden}
In the following we will assume $0<s<\frac{1}{2}$ and $n=1$. Given $u\in C^1{(\R)}$, we denote by $\dot u$ its derivative.

 In order to prove our Pohozaev identity, we will crucially use the decay estimates established in the previous section. 
Indeed, in Lemma \ref{energy expansion-v2} below, we will show that a Pohozaev identity holds true for functions which satisfy certain decay conditions close to the origin and at $\infty$, and such decays are satisfied by solutions to our problem \eqref{P} thanks to the results of Section \ref{sec 3 lane emden}.

In order to prove Lemma \ref{energy expansion-v2}, we will first prove the same result under stronger assumptions on the function $u$.

We start by establishing the result for functions that vanish both close to the origin and at $\infty$.

\begin{lemma}\label{energy expansion non local}
Let $u\in C^1_{\mathrm c}(\R\setminus\{0\})$. Then,
      \[
    (1-2s)\frac{c_s}{2}\iint_{T(\R^+)}\frac{(u(x)-u(y))^2}{|x-y|^{1+2s}}dxdy=-2\int_0^{+\infty}(-\Delta)^su(x)x\dot u(x)dx-2\int_{-\infty}^0\mathcal N_su(x)x\dot u(x)dx.
    \]
\end{lemma}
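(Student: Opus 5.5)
We derive the identity from the scaling behaviour of the energy under dilations. For $\lambda>0$ set $u_\lambda(x)=u(\lambda x)$. Since $T(\R^+)=\R^2\setminus(\R^-)^2$ is invariant under $(x,y)\mapsto(\lambda x,\lambda y)$, the change of variables $(x,y)\mapsto(x/\lambda,y/\lambda)$ gives
\[
\E(u_\lambda,T(\R^+))=\lambda^{2s-1}\E(u,T(\R^+)).
\]
Differentiating at $\lambda=1$ yields
\[
\frac{d}{d\lambda}\Big|_{\lambda=1}\E(u_\lambda,T(\R^+))=(2s-1)\,\E(u,T(\R^+)).
\]
The plan is to compute the same derivative directly by differentiating under the integral sign, and then to identify the result with the right-hand side through the integration by parts formula \eqref{int-by-parts}.

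For the direct computation, note that $\partial_\lambda u_\lambda|_{\lambda=1}=x\dot u(x)=:w(x)$. Since $u\in C^1_c(\R\setminus\{0\})$, the function $w$ is $C^0$ with compact support away from the origin and is Lipschitz, and likewise $u_\lambda$ for $\lambda$ near $1$. Formally,
\[
\frac{d}{d\lambda}\Big|_{\lambda=1}\E(u_\lambda,T(\R^+))=2\iint_{T(\R^+)}\frac{(u(x)-u(y))(w(x)-w(y))}{|x-y|^{1+2s}}\,dx\,dy.
\]
To justify this, use the difference quotient
\[
\frac{(u_\lambda(x)-u_\lambda(y))^2-(u(x)-u(y))^2}{\lambda-1}.
\]
For $\lambda\in[1/2,2]$, the mean value theorem applied to $t\mapsto u_t(x)-u_t(y)$ bounds it by $C\,\min\{|x-y|^2,1\}\cdot\chi$, where $\chi$ restricts $x$ or $y$ to a fixed compact set $K$ containing all the supports. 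This uses $|u_t(x)-u_t(y)|\le C\min\{|x-y|,1\}$ and the corresponding bound for $\partial_t(u_t(x)-u_t(y))=x\dot u(tx)-y\dot u(ty)$ (a Lipschitz-type estimate; if $\dot u$ is merely continuous, one first mollifies $u$ and passes to the limit, or works with the $\lambda$-derivative in integrated form). This majorant divided by $|x-y|^{1+2s}$ is integrable on $\R^2$ because $2s<2$ near the diagonal and $2s>0$ at infinity. Dominated convergence then applies.

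Next we apply \eqref{int-by-parts} with $\Omega=\R^+$, $v=w$, $n=1$. The formula is stated for bounded $C^2$ functions, so we first mollify $u$ with $u_\varepsilon=u*\rho_\varepsilon$, which is still compactly supported away from $0$ for small $\varepsilon$. Using the normalization of $\E$ (no $c_s$) against the constant $c_s/2$ in \eqref{int-by-parts}, we obtain
\[
\frac{c_s}{2}\cdot 2\iint_{T(\R^+)}\frac{(u(x)-u(y))(w(x)-w(y))}{|x-y|^{1+2s}}=2\int_0^\infty(-\Delta)^su\,x\dot u\,dx+2\int_{-\infty}^0\mathcal N_su\,x\dot u\,dx.
\]
Multiplying the scaling identity by $c_s/2$ and comparing then gives
\[
(2s-1)\frac{c_s}{2}\E=2\int_0^\infty(-\Delta)^su\,x\dot u\,dx+2\int_{-\infty}^0\mathcal N_su\,x\dot u\,dx,
\]
which is exactly the claim after changing sign.

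The passage $\varepsilon\to0$ needs some care. The left side converges because $u_\varepsilon\to u$ in the relevant Gagliardo energy, the supports being uniformly compact. On the right side, $(-\Delta)^su_\varepsilon\to(-\Delta)^su$ locally uniformly, since $u\in C^1_c$ and $2s<1$ make $(-\Delta)^su$ continuous. Similarly $\mathcal N_su_\varepsilon\to\mathcal N_su$ uniformly on $\operatorname{supp}w\cap\R^-$, which stays away from $0$, while $x\dot u_\varepsilon\to x\dot u$ uniformly with compact support. I expect this regularity and limit step to be the main technical point, though it is routine because the supports avoid the origin and $s<1/2$.
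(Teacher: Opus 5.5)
Your argument is correct, and it takes a genuinely different route from the paper's.

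\textbf{How the two proofs differ.} The paper computes directly. Using the symmetry of $T(\R^+)$, it rewrites the left-hand side as $c_s\iint_{T(\R^+)}(u(x)-u(y))^2\,\partial_x\bigl(x|x-y|^{-1-2s}\bigr)\,dx\,dy$. It then splits $T(\R^+)$ into $(\R^+)^2$, $\R^-\times\R^+$ and $\R^+\times\R^-$ and integrates by parts in $x$. On $(\R^+)^2$ it first removes the strip $|x-y|<\delta$ and checks that the boundary terms at $x=y\pm\delta$ vanish as $\delta\to0$. Finally it reads off $(-\Delta)^s u$ on $\R^+$ and $\mathcal N_s u$ on $\R^-$ from the remaining $y$-integrals. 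You instead differentiate the exact scaling law $\E(u_\lambda,T(\R^+))=\lambda^{2s-1}\E(u,T(\R^+))$ at $\lambda=1$ and combine it with the Green formula \eqref{int-by-parts}.

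The paper's kernel identity is essentially the infinitesimal form of your argument. It computes the dilation derivative once on the kernel and once on the function, and links the two by integrating by parts in $x$.

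\textbf{What each route buys.} Your version explains where the factor $1-2s$ comes from: it is the scaling exponent of the energy on the dilation-invariant set $T(\R^+)$. It also transfers essentially verbatim to $\R^n_+$, with $x\cdot\nabla u$ in place of $x\dot u$ and factor $n-2s$. The paper's version is self-contained. It never differentiates under the integral in $\lambda$, and it never applies \eqref{int-by-parts} outside its stated $C^2$ setting.

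\textbf{Small corrections.} First, $w=x\dot u$ is not Lipschitz when $u$ is only $C^1$, but you do not need that. Since $s<\frac12$, the bounds $|u_t(x)-u_t(y)|\le C\min\{|x-y|,1\}$ and $|x\dot u(tx)-y\dot u(ty)|\le C$ already give the majorant $C\min\{|x-y|,1\}|x-y|^{-1-2s}$ on $\{x\in K\}\cup\{y\in K\}$. This majorant is integrable precisely because $2s<1$, so no mollification is needed for the differentiation step.

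Second, for the same reason \eqref{int-by-parts} can be verified directly by Fubini for $u\in C^1_c(\R\setminus\{0\})$ and $v=w$. Both $\iint \frac{|u(x)-u(y)|\,|w(x)|}{|x-y|^{1+2s}}$ and its mirror term converge absolutely. If you do mollify, pass to the limit in the mixed term by dominated convergence with the majorant above. Convergence in Gagliardo energy is not the right justification, because $w$ need not have finite $H^s$-seminorm when $\dot u$ is merely continuous.
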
\begin{proof}First of all we notice that
    \begin{align}\label{integral_expansion}
\nonumber(1-2s) \frac{c_{s}}{2} \iint_{T\left(\mathbb{R}^{+}\right)} \frac{(u(x)-u(y))^2}{|x-y|^{1+2 s}}&=
\frac{c_{s}}{2} \iint_{T\left(\mathbb{R}^{+}\right)}(u(x)-u(y))^2\left(\frac{2}{|x-y|^{1+2 s}}-(1+2 s) \frac{(x-y)^2}{|x-y|^{3+2 s}}\right)\\
&=c_{s} \iint_{T\left(\mathbb{R}^{+}\right)}(u(x)-u(y))^2\left(\frac{1}{|x-y|^{1+2 s}}-(1+2s) \frac{(x-y) \cdot x}{|x-y|^{3+2 s}}\right)\\
\nonumber&=c_s \iint_{T\left(\mathbb{R}^{+}\right)}(u(x)-u(y))^2 \frac{\partial}{\partial x}\left(\frac{x}{|x-y|^{1+2 s}}\right) ,
    \end{align}where in \eqref{integral_expansion} we used the symmetry of the integral. Now we split this last integral as a sum of three parts, namely \[
    c_s \iint_{T\left(\mathbb{R}^{+}\right)}(u(x)-u(y))^2 \frac{\partial}{\partial x}\left(\frac{x}{|x-y|^{1+2 s}}\right)=A+B+C
    \]where \[
    A=c_{ s} \int_0^{+\infty} \int_0^{+\infty}(u(x)-u(y))^2 \frac{\partial}{\partial x}\left(\frac{x}{|x-y|^{1+2 s}}\right)dxdy,
    \]
    \[
    B=c_{ s} \int_0^{+\infty} \int_{-\infty}^0(u(x)-u(y))^2 \frac{\partial}{\partial x}\left(\frac{x}{|x-y|^{1+2 s}}\right)dx dy
    ,\]and \[
    C=c_{ s} \int_{-\infty}^0 \int_0^{+\infty}(u(x)-u(y))^2 \frac{\partial}{\partial x}\left(\frac{x}{|x-y|^{1+2 s}}\right) dxdy.
    \]For $B$ and $C$ we find, after integration by parts with respect to the $x$ variable, that \begin{equation}\label{B}
    B=-2 c _s \int_0^{+\infty}  \int_{-\infty}^0 \frac{(u(x)-u(y))}{|x-y|^{ 1+2 s}} x \dot u(x)dxdy
    \end{equation}and\begin{equation}\label{C}
    C=-2 c _s \int_{-\infty}^0\int_0^{+\infty}  \frac{(u(x)-u(y))}{|x-y|^{ 1+2 s}} x \dot u(x)dxdy.
    \end{equation}For $A$ we have \begin{align*}
    A=\lim _{\delta  \to 0^+} &c_{ s} \int_0^{+\infty}  \int_{0}^{y-\delta}(u(x)-u(y))^2 \frac{\partial}{\partial x}\left(\frac{x}{|x-y|^{1+2 s}}\right) d xdy\\+& c_s \int_0^{+\infty}  \int_{y+\delta}^{+\infty}(u(x)-u(y))^2 \frac{\partial}{\partial x}\left(\frac{x}{|x-y|^{1+2s}}\right) d xdy.
    \end{align*}Again using the integration by parts we find \begin{align}\label{boundary1}
   \nonumber \int_0^{+\infty}  \int_{0}^{y-\delta}(u(x)-u(y))^2 \frac{\partial}{\partial x}\bigg(\frac{x}{|x-y|^{1+2 s}}\bigg) d xdy=&-2  \int_0^{+\infty} \int_{0}^{y-\delta} \frac{u(x)-u(y)}{|x-y|^{1+2s}} \dot{u}(x) xdxdy\\&+ \int_0^{+\infty}(u(y-\delta)-u(y))^2 \frac{y-\delta}{\delta^{1+2 s}} d y,
    \end{align}and\begin{align}\label{boundary2}
     \nonumber   \int_0^{+\infty}  \int_{y+\delta}^{+\infty}(u(x)-u(y))^2 \frac{\partial}{\partial x}\bigg(\frac{x}{|x-y|^{1+ 2s}}\bigg) d xdy=&-2  \int_0^{+\infty} \int_{y+\delta}^{+\infty} \frac{u(x)-u(y)}{|x-y|^{1+s s}} \dot{u}(x) xdxdy\\&-\int_0^{+\infty}(u(y+\delta)-u(y))^2 \frac{y+\delta}{\delta^{1+2 \delta}}dy.
    \end{align}
    Now we want to show that the sum of the single integrals in \eqref{boundary1} and \eqref{boundary2} goes to $0$ as $\delta\to0$. Indeed by a change of variables we can see that this sum is equal to \[
    \int_{-\delta}^0(u(y+\delta)-u(y))^2 \frac{y}{\delta^{1+2 s}}dy- \int_0^{+\infty} \frac{(u(y+\delta)-u(y))^2}{\delta^{2 s}}dy
    .\]For the first integral we notice that \[
    \int_{-\delta}^0(u(y+\delta)-u(y))^2 \frac{|y|}{\delta^{1+2 s}}dy\leq{2\|u\|^2_{L^\infty(\R)}}\delta^{1-2s}\rightarrow0
    \]as $\delta\to0$ since $s<\frac{1}{2}$. For the second integral we observe that, since the support of $u$ is compact, there exist $0<a<b$ such that \begin{equation}\label{boundary_1to0}
    \operatorname{supp}(u(\cdot+\delta)-u(\cdot))\subseteq[a,b]
    \end{equation}for any $\delta$ small enough. Therefore \begin{equation}\label{boundary_2to0}
    \int_0^{+\infty} \frac{(u(y+\delta)-u(y))^2}{\delta^{2 s}}dy\leq \int_{a}^b\frac{(u(y+\delta)-u(y))^2}{\delta^{2 s}}dy\leq \delta^{2-2s}(b-a)\|\dot  u\|_{L^\infty(\R^+)}\rightarrow 0
    \end{equation}as $\delta\to0.$
    From \eqref{boundary_1to0} and \eqref{boundary_2to0} we have \[
    A=-2c_s\int_0^{+\infty}\int_{0}^{+\infty}\dfrac{u(x)-u(y)}{|x-y|^{1+2s}}\dot u(x)xdxdy
    \]and combining this last relation with \eqref{B} and \eqref{C} and recalling \eqref{integral_expansion} we find\begin{align*}
    (1-2s)\frac{c_s}{2}\iint_{T(\R^+)}\frac{(u(x)-u(y))^2}{|x-y|^{1+2s}}dxdy=&-2\int_0^{+\infty}(-\Delta)^su(x)x\dot u(x)dx\\&-2\int_{-\infty}^0\mathcal N_su(x)x\dot u(x)dx,
    \end{align*}which concludes the proof.
    \end{proof}

    The following Leibniz-type rule holds true.
    \begin{lemma}
        Let $u,v:\R^n\to\R$ be two functions. Then,\begin{equation}\label{product rule laplacian}
        (-\Delta)^s(uv)(x)=(-\Delta)^su(x)v(x)+u(x)(-\Delta)^sv(x)-I(u,v,\R^n)(x)
        \end{equation}and\begin{equation}\label{product rule neumann}
        \mathcal{N}_s(uv)(x)=\mathcal{N}_su(x)v(x)+u(x)\mathcal{N}_sv(x)-I(u,v,\R^n_+)(x),
        \end{equation}where, for any $E\subseteq\R^n$ measurable, we denoted \[
        I(u,v,E)(x)=c_{n,s}\int_{E} \frac{(u(x)-u(y))\left(v(x)-v(y)\right)}{|x-y|^{n+2 s}}dy
       .\]
    \end{lemma}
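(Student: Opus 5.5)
The plan is to prove both identities by a pointwise algebraic identity inside the integrand, followed by linearity of the integral (or of the principal value). The key identity is elementary: for real numbers $a=u(x)$, $b=u(y)$, $c=v(x)$, $d=v(y)$,
\[
ac-bd=(a-b)c+a(c-d)-(a-b)(c-d).
\]
One checks this by expanding the right-hand side:
\[
ac-bc+ac-ad-ac+ad+bc-bd=ac-bd .
\]

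Applying the identity with $x$ fixed and dividing by $|x-y|^{n+2s}$ gives
\[
\frac{u(x)v(x)-u(y)v(y)}{|x-y|^{n+2s}}=v(x)\frac{u(x)-u(y)}{|x-y|^{n+2s}}+u(x)\frac{v(x)-v(y)}{|x-y|^{n+2s}}-\frac{(u(x)-u(y))(v(x)-v(y))}{|x-y|^{n+2s}}.
\]
For \eqref{product rule neumann} we take $x\in\R^n_-$, integrate in $y$ over $\R^n_+$ and multiply by $c_{n,s}$. The left side is $\mathcal N_s(uv)(x)$ by \eqref{Neu}. The first two terms on the right give $v(x)\mathcal N_su(x)+u(x)\mathcal N_sv(x)$, because $v(x)$ and $u(x)$ do not depend on $y$. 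The last term gives $I(u,v,\R^n_+)(x)$. For \eqref{product rule laplacian} we do the same with $y$ ranging over $\R^n\setminus B_\varepsilon(x)$ and then let $\varepsilon\to0^+$, in accordance with the principal value in \eqref{FL}.

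The only real point is to justify splitting the integral, that is, to know that each term is finite (or that the principal-value limits exist), since the lemma is stated formally for arbitrary functions. I would read the statement as holding whenever the right-hand terms make sense. For example, this is the case when $u,v\in\mathcal L_s\cap C^{1,1}_{\rm loc}$ are bounded (or $C^{2s+\alpha}$), so that $(-\Delta)^su$ and $(-\Delta)^sv$ exist pointwise. In that case $I(u,v,\R^n)(x)$ converges absolutely. Near $y=x$ the integrand is $O(|x-y|^{2-n-2s})$ when $u,v$ are locally Lipschitz. At infinity it is controlled by boundedness of $u$ and $v$, or more generally by the $\mathcal L_s$ condition. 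So the limit of the sum equals the sum of the limits, and the principal value of $(-\Delta)^s(uv)$ exists and satisfies \eqref{product rule laplacian}.

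For the Neumann identity with $x\in\R^n_-$ there is no singularity, since $\operatorname{dist}(x,\R^n_+)=|x_n|>0$. The integrals then converge as soon as $u,v$ are bounded, or lie in $\mathcal L_s$ with one of them bounded. This is exactly the setting in which the lemma is applied later, with $u$ a solution and $v$ a cutoff.
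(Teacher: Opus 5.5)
Your proof is correct and matches the paper's. The paper simply cites \cite[Proposition 1.5]{BWZ}, whose proof is exactly the pointwise identity $u(x)v(x)-u(y)v(y)=v(x)(u(x)-u(y))+u(x)(v(x)-v(y))-(u(x)-u(y))(v(x)-v(y))$ integrated against the kernel, and says the Neumann version follows by the same computation over $\R^n_+$. Your justification of when the integral may be split (absolute convergence of $I$, and no singularity for $x\in\R^n_-$) makes explicit a point the paper leaves implicit.
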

    \begin{proof}
        The proof of \eqref{product rule laplacian} can be found in \cite[Proposition 1.5]{BWZ}. With the very same computation we also obtain \eqref{product rule neumann}.
    \end{proof}
    \begin{remark}\label{cut off}
    Let $\varphi:\R\to\R$ be a function with the following properties:\begin{enumerate}[$(i)$]
        \item $\varphi \in C^{\infty}(\R);$
            \item $0\leq\varphi (x)\leq1$ for all $x\in\R;$
            \item $\varphi(x)=1$ if $x\in\left(-\infty,-2\right]\cup [2,\infty);$
            \item $\varphi(x)=0$ if $x\in\left[-1,1 \right]$.
            \item $|\dot{\varphi}(x)|\le 1$ for all $x\in\R$.
    \end{enumerate}
    Then there exists a constant $C>0$ such that \[
    \int _{\R}\frac{|\varphi(x)-\varphi(y)|}{|x-y|^{1+2s}}dy\leq \frac{C}{(1+|x|)^{1+2s}}\quad\text{for all $x\in \R$.}
    \]Indeed if $|x|\in[0,3]$ we split
\[\begin{aligned}
\int_\R\frac{|\varphi(x)-\varphi(y)|}{|x-y|^{1+s}}dy&=\int_{\{|x-y|\geq1\}}\frac{|\varphi(x)-\varphi(y)|}{|x-y|^{1+2s}}dy+\int_{\{|x-y|\leq1\}}\frac{|\varphi(x)-\varphi(y)|}{|x-y|^{1+2s}}dy\\
&\leq \int_{\{|x-y|\geq1\}}\frac{2}{|x-y|^{1+2s}}dy+\int_{\{|x-y|\leq1\}}\frac{1}{|x-y|^{2s}}dy\\
&\leq \frac{2}{s}+\frac{L}{1-2s}\\&
\leq \frac{C}{(1+|x|)^{1+2s}}\quad \text{if $|x|\in[0,3]$.}
\end{aligned}
\]If $|x|\geq3$ we have $\varphi(x)=1$ and since also $\varphi(y)=1$ if $|y|\geq 2$ we can write \[
\int_{\R}\frac{|\varphi(x)-\varphi(y)|}{|x-y|^{1+2s}}dy=\int_{-2}^2\frac{|1-\varphi(y)|}{|x-y|^{1+2s}}dy,
\]and observing that for $|x|>3$ and $|y|<2$, we have
\[
|x-y|\geq |x|-|y|\geq |x|-2
\]we conclude \[
\int_\R\frac{|\varphi(x)-\varphi(y)|}{|x-y|^{1+2s}}dy\leq \frac{1}{(|x|-2)^{1+2s}}\int_{-2}^2{|1-\varphi(y)|}dy\leq \frac{C}{(1+|x|)^{1+2s}}.
\]if $|x|\in[3,+\infty).$

 This implies in particular  that 
 $$(-\Delta)^s\varphi\in L^1(\R)\cap L^\infty(\R_+),\quad \mathcal N_s\varphi\in L^1(\R)\cap L^\infty(\R_-).
 $$

\end{remark}
\medskip

In the following Lemma, we extend Lemma \ref{energy expansion non local} to functions that have compact support but not necessarily vanish around the origin. For this, we need an integrability condition on their fractional Laplacians and nonlocal Neumann data and a suitable bound on $\dot u$.

\begin{lemma}\label{energy expansion-v1}
  Let $u \in C^1_{\mathrm{loc}}(\R \setminus \{0\}) \cap L^\infty(\R)$ be a function with $u \equiv 0$ on $\R \setminus [-K,K]$ for some $K>0$ and with the following properties:
  \begin{itemize}
   \item[$(i)$] $(-\Delta)^s u \in L^1(0,\infty)$
   \item[$(ii)$] $\mathcal N_s u \in L^1(-\infty,0)$.
   \item[$(iii)$] $|\dot u(x)| \le \frac{C}{x} \;\text{for $x \in \R \setminus \{0\}$.}$
   \end{itemize}
  Then \[
    (1-2s)\frac{c_s}{2}\iint_{T(\R^+)}\frac{(u(x)-u(y))^2}{|x-y|^{1+2s}}dxdy=-2\int_0^{+\infty}(-\Delta)^su(x)x\dot u(x)dx-2\int_{-\infty}^0\mathcal N_su(x)x\dot u(x)dx.
    \]
\end{lemma}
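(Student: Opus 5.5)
We cut $u$ off near the origin and pass to the limit in Lemma \ref{energy expansion non local}. Let $\varphi$ be as in Remark \ref{cut off} and, for $\varepsilon\in(0,1)$, set $\varphi_\varepsilon(x)=\varphi(x/\varepsilon)$ and $u_\varepsilon=u\varphi_\varepsilon$. Then $u_\varepsilon\in C^1_{\mathrm c}(\R\setminus\{0\})$: it vanishes on $[-\varepsilon,\varepsilon]$ and outside $[-K,K]$, and it is $C^1$ because $u\in C^1_{\mathrm{loc}}(\R\setminus\{0\})$. Lemma \ref{energy expansion non local} therefore gives the identity for each $u_\varepsilon$. It remains to let $\varepsilon\to0$ in the three terms.

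\emph{Left-hand side.} We have $u_\varepsilon\to u$ pointwise on $\R\setminus\{0\}$. We write
\[
u_\varepsilon(x)-u_\varepsilon(y)=\varphi_\varepsilon(x)(u(x)-u(y))+u(y)(\varphi_\varepsilon(x)-\varphi_\varepsilon(y)).
\]
The first piece is dominated by $|u(x)-u(y)|$. For the second piece we need $\mathcal E(u,T(\R^+))<\infty$, which follows from $(iii)$, boundedness and compact support, since $|u(x)-u(y)|\le C|x-y|\log(\cdot)/\min(|x|,|y|)$ locally. By scaling and $s<\frac12$,
\[
\iint\frac{u(y)^2(\varphi_\varepsilon(x)-\varphi_\varepsilon(y))^2}{|x-y|^{1+2s}}\le \|u\|_\infty^2\,\varepsilon^{1-2s}\,\mathcal E(\varphi,\R^2)\to0 .
\]
Hence $\mathcal E(u_\varepsilon,T(\R^+))\to\mathcal E(u,T(\R^+))$.

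\emph{Right-hand side.} We use the Leibniz rules \eqref{product rule laplacian} and \eqref{product rule neumann}:
\[
(-\Delta)^s u_\varepsilon=\varphi_\varepsilon(-\Delta)^s u+u(-\Delta)^s\varphi_\varepsilon-I(u,\varphi_\varepsilon,\R),
\]
and analogously for $\mathcal N_s$. We also have $x\dot u_\varepsilon=\varphi_\varepsilon x\dot u+u\,x\dot\varphi_\varepsilon$, where $|x\dot u|\le C$ by $(iii)$ and $|x\dot\varphi_\varepsilon(x)|=|(x/\varepsilon)\dot\varphi(x/\varepsilon)|\le 2\chi_{\{\varepsilon\le|x|\le2\varepsilon\}}$. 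So $x\dot u_\varepsilon$ is uniformly bounded.
\begin{itemize}
\item The main term $\int_0^\infty\varphi_\varepsilon(-\Delta)^su\,x\dot u_\varepsilon$ converges to $\int_0^\infty(-\Delta)^su\,x\dot u$ by dominated convergence, using $(i)$ and the uniform bound on $x\dot u_\varepsilon$.
\item The term with $\mathcal N_s u$ converges in the same way, using $(ii)$.
\item For the remaining pieces, Remark \ref{cut off} and scaling give $|(-\Delta)^s\varphi_\varepsilon(x)|\le C\varepsilon^{-2s}(1+|x|/\varepsilon)^{-1-2s}$. Its $L^1$ norm is therefore $O(\varepsilon^{1-2s})\to0$, and since $u\,x\dot u_\varepsilon$ is bounded this piece vanishes. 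The same holds for $\mathcal N_s\varphi_\varepsilon$.
\end{itemize}

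\emph{Main obstacle: the commutator terms.} We must show $\int|I(u,\varphi_\varepsilon,E)(x)|\,|x\dot u_\varepsilon(x)|\,dx\to0$. Bounding $|u(x)-u(y)|\le 2\|u\|_\infty$ gives
\[
|I(u,\varphi_\varepsilon,E)(x)|\le 2\|u\|_\infty\int\frac{|\varphi_\varepsilon(x)-\varphi_\varepsilon(y)|}{|x-y|^{1+2s}}\,dy\le C\varepsilon^{-2s}\Bigl(1+\frac{|x|}{\varepsilon}\Bigr)^{-1-2s}
\]
by the estimate of Remark \ref{cut off}, which scales correctly. This is again $O(\varepsilon^{1-2s})$ in $L^1$, so it tends to zero against the bounded weight $x\dot u_\varepsilon$. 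Here $s<\frac12$ is exactly what is needed.

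The delicate point is making sure no term carries a factor $\dot u$ without the weight $x$. In the decomposition above, $\dot u$ only appears as $x\dot u$, which $(iii)$ keeps bounded, so the argument closes. The only remaining check is that the pointwise Leibniz formulas hold a.e. for $u$, which is merely bounded near $0$; we justify this by the absolute convergence of $I$ just shown.
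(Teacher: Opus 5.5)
Your proposal is correct and follows essentially the same route as the paper: you cut off with $\varphi(x/\varepsilon)$ (the paper's $\varphi(kx)$), apply Lemma \ref{energy expansion non local}, split with the Leibniz rules, kill the cutoff and commutator terms through the scaled bound of Remark \ref{cut off} (which produces the factor $\varepsilon^{1-2s}$), and handle the energy with the same three-piece decomposition. Your observation that $x\dot u_\varepsilon$ is uniformly bounded packages the paper's term-by-term changes of variables into a single $L^1$ estimate, but the argument is the same.
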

  \begin{proof}
       Let $\varphi$ be a function as in Remark \ref{cut off}. We apply Lemma \ref{energy expansion non local} to the function $u\varphi_k$ where $\varphi_k(x) = \varphi( k x).$
    We then obtain:
      \begin{align*}
        &(1-2 s) \frac{c_s}{2} \int_{T\left(\mathbb{R}^{+}\right)} \frac{\left(u \varphi_k(x)-u \varphi_k(y)\right)^2}{|x-y|^{1+2 s}} d x d y\\
        &=-2 \int_0^{+\infty}x(-\Delta)^s\left(u \varphi_k\right) \dot{\left(u \varphi_k\right)}dx -2  \int_{-\infty}^0x\mathcal N_s\left(u \varphi_k\right) \dot{\left(u \varphi_k\right)}dx\\
&=-2 \int_{\frac1k}^{K}x(-\Delta)^s\left(u \varphi_k\right) \dot{\left(u \varphi_k\right)}dx -2  \int_{-\infty}^{-\frac1k}x\mathcal N_s\left(u \varphi_k\right) \dot{\left(u \varphi_k\right)}dx.          
      \end{align*}
  We can rewrite the first integral on the right-hand side as \begin{align*}
&-2 \int_{\frac1k}^{K}(-\Delta)^s\left(u \varphi_k\right) x\dot{(u \varphi_k)}dx\\
&=-2 \int_{\frac1k}^{K} x\varphi_k(-\Delta)^s u \dot{(u \varphi_k)}-2 \int_{\frac 1k}^{K} x\dot{(u {\varphi}_k)}\left[u(-\Delta)^s \varphi_k-I(u, \varphi_k,\mathbb{R})(x)\right]dx.
\end{align*}For the first integral on the right hand side we have
\begin{align*}
-2 \int_{\frac1k}^{K} x\varphi_k(-\Delta)^s u \dot{(u \varphi_k)}
&=-2\int_{\frac1k}^{K}x\varphi_k^2(-\Delta)^su\cdot\dot u\:dx-2\int_{\frac1k}^{K}x \varphi_k\dot \varphi_k(-\Delta)^su \cdot u\:dx\\
 &=-2\int_{\frac1k}^{K}x\varphi_k^2(-\Delta)^su\cdot\dot u\:dx-2\int_{\frac1k}^{\frac2k} x \varphi_k\dot \varphi_k(-\Delta)^su \cdot u\:dx.
\end{align*}
    By Lebesgue's theorem,
\[
  \int_{\frac1k}^{K}x\varphi_k^2(-\Delta)^su\cdot\dot u\:dx \to 
  \int_{0}^{K}(-\Delta)^su   x \cdot  \dot u\:dx = \int_{0}^{\infty}(-\Delta)^su   x \cdot  \dot u\:dx 
\]
as $k \to +\infty$.

 Moreover, since $|\dot \varphi_k|\le C k$ on $[\frac1k,\frac2k]$ with a constant $C>0$, we have
\[
  \Bigl|\int_{\frac1k}^{\frac2k} x \varphi_k\dot \varphi_k(-\Delta)^su \cdot u\:dx\Bigr| \le \int_{\frac1k}^{\frac2k} u  (-\Delta)^s u \:dx\Bigr| \to 0
\]
as $k \to +\infty$ again by Lebesgue's theorem and since $(-\Delta)^s u\in L^1(\R^+)$ and $u$ is bounded on $\R^+$.

Hence 
\[
\int_{\frac1k}^{K} x\varphi_k(-\Delta)^s u \dot{(u \varphi_k)} \to   
\int_{0}^{\infty}(-\Delta)^su   x \cdot  \dot u\:dx
\]
as $k \to +\infty$.

Next we consider the integral
\begin{align*}
&\int_{\frac 1k}^{K} x\dot{(u {\varphi}_k)}\left[u(-\Delta)^s \varphi_k-I(u, \varphi_k,\mathbb{R})(x)\right]dx\\
&= \int_{\frac 1k}^{K}\varphi_k  x\dot{u} \left[u(-\Delta)^s \varphi_k-I(u, \varphi_k,\mathbb{R})(x)\right]dx + \int_{\frac 1k}^{K}\dot \varphi_k  x u \left[u(-\Delta)^s \varphi_k-I(u, \varphi_k,\mathbb{R})(x)\right]dx.
\end{align*}
We now use the facts that
$$
(-\Delta)^s \varphi_k  = k^{2s} [(-\Delta)^s \varphi](k \cdot)
$$
and
$$
I(u, \varphi_k,\mathbb{R})(x) = k^{2s} I(u(\frac{\cdot}{k}),\varphi,\mathbb{R})(k \cdot)
$$
to see that, by a change of variables,
\begin{equation}\label{first laplacian}\begin{aligned}
  &\Bigl|\int_{\frac 1k}^{K}\varphi_k  x\dot{u} \left[u(-\Delta)^s \varphi_k-I(u, \varphi_k,\mathbb{R})(x)\right]dx\Bigr|\\
  &=k^{2s-2} \Bigl|\int_{1}^{k K}  \varphi  x  \dot{u}(\frac{\cdot}{k}) \left[u(\frac{\cdot}{k})  (-\Delta)^s \varphi -I(u(\frac{\cdot}{k}), \varphi,\mathbb{R})(x)\right]dx\Bigr|\\
  &\le Ck^{2s-1} \int_{1}^{k K}  \varphi  \Bigl| u(\frac{\cdot}{k})  (-\Delta)^s \varphi -I(u(\frac{\cdot}{k}), \varphi,\mathbb{R})(x)\Bigr|dx.
\end{aligned}\end{equation}

By Remark \ref{cut off}, we have that
$$
\int_{1}^{k K}  \varphi  \Bigl| u(\frac{\cdot}{k})  (-\Delta)^s \varphi\Bigr| dx
\le C\int_{1}^{\infty}  \varphi  \Bigl|(-\Delta)^s \varphi\Bigr| dx <\infty.
$$
Moreover, by Remark \ref{cut off}, we have
$$
\Bigl| I(u(\frac{\cdot}{k}), \varphi,\mathbb{R})(x)\Bigr|\le 2\|u\|_{L^\infty(\R)} \int_{\R} \frac{|\varphi(x)-\varphi(y)|}{|x-y|^{1+2s}}dy
\le \frac{C}{(1+x)^{1+2s}}
$$
for $x \in (0,\infty)$ and therefore
$$
\int_{1}^{k K}  \varphi \Bigl| I(u(\frac{\cdot}{k}), \varphi,\mathbb{R})(x)\Bigr|dx \le C \int_{1}^{\infty}\frac{1}{(1+x)^{1+2s}}dx < \infty.
$$
Since $s < \frac{1}{2}$, we thus conclude that 
$$
\int_{\frac 1k}^{K}\varphi_k  x\dot{u} \left[u(-\Delta)^s \varphi_k-I(u, \varphi_k,\mathbb{R})(x)\right]dx \to 0
$$
as $k \to +\infty$. Moreover, we have, again by a change of variables, 
\begin{align*}
  &\int_{\frac 1k}^{K}\dot \varphi_k  x u \left[u(-\Delta)^s \varphi_k-I(u, \varphi_k,\mathbb{R})(x)\right]dx = k \int_{\frac 1k}^{\frac2k}\dot \varphi(k \cdot)  x u \left[u(-\Delta)^s \varphi_k-I(u, \varphi_k,\mathbb{R})(x)\right]dx\\
  &=k^{2s-1} \int_{1}^{2}\dot \varphi  x u(\frac{\cdot}{k}) \left[u(\frac{\cdot}{k})(-\Delta)^s \varphi-I(u(\frac{\cdot}{k}), \varphi,\mathbb{R})(x)\right]dx,
\end{align*}
where, again thanks to Remark \ref{cut off} and similarly as before we have that 
\begin{align*}
\Bigl|\int_{1}^{2}\dot \varphi  x u(\frac{\cdot}{k}) \left[u(\frac{\cdot}{k})(-\Delta)^s \varphi-I(u(\frac{\cdot}{k}), \varphi,\mathbb{R})(x)\right]dx \bigr|
  \le C \int_{1}^{2}\Bigl(|(-\Delta)^s \varphi|+ \tilde I(\varphi)\Bigr) dx \bigr|< \infty
\end{align*}
with
$$
\tilde I(\varphi)(x) = \int_{\R} \frac{|\varphi(x)-\varphi(y)|}{|x-y|^{1+2s}}dy.
$$
Hence we also see that 
$$
\int_{\frac 1k}^{K}\dot \varphi_k  x u \left[u(-\Delta)^s \varphi_k-I(u, \varphi_k,\mathbb{R})(x)\right]dx \to 0
$$
and therefore
$$
\int_{\frac 1k}^{K} x\dot{(u {\varphi}_k)}\left[u(-\Delta)^s \varphi_k-I(u, \varphi_k,\mathbb{R})(x)\right]dx \to 0
$$
as $k \to +\infty$. Next we consider the integral
\begin{align}
  &\int_{-\infty}^{-\frac1k}x\mathcal N_s\left(u \varphi_k\right) \dot{\left(u \varphi_k\right)}dx \nonumber\\
  &=\int_{-\infty}^{-\frac1k}x\mathcal N_s\left(u \varphi_k\right) \dot u \varphi_k dx + \int_{-\frac2k}^{-\frac1k}x\mathcal N_s\left(u \varphi_k\right)u \dot \varphi_k dx \nonumber\\
  &=\int_{-\infty}^{-\frac1k}x \varphi_k (\mathcal N_s u) \dot u \varphi_k dx + \int_{-\frac2k}^{-\frac1k}x(\mathcal N_s u) \varphi_k u \dot \varphi_k dx \nonumber\\
&\hspace{1em}+ \int_{-\infty}^{-\frac1k}x  \Bigl(u\mathcal N_s \varphi_k -I(u,\varphi_k,\R^n_+)\Bigr) \dot u \varphi_k dx + \int_{-\frac2k}^{-\frac1k}x \Bigl(u\mathcal N_s \varphi_k -I(u,\varphi_k,\R^n_+)\Bigr)u \dot \varphi_k dx. \label{en-ex-v1-extra}
\end{align}
Again, we see, by Lebesgue's theorem, that 
$$
\int_{-\infty}^{-\frac1k} x \varphi_k (\mathcal N_s u) \dot u \varphi_k dx
\to \int_{-\infty}^{0} (\mathcal N_s u) x  \dot u dx \qquad \text{as $k \to +\infty$}
$$
and that, since $|\dot \varphi_k|\le C k$ on $[-\frac2k,-\frac1k]$ with a constant $C>0$
$$
\Bigl|\int_{-\frac2k}^{-\frac1k}x(\mathcal N_s u) \varphi_k u \dot \varphi_k dx\Bigr| \le C \int_{-\frac2k}^{-\frac1k}|\mathcal N_s u| dx \to 0
$$
as $k \to +\infty$, by Remark \ref{cut off}.

Now we deal with the two integrals in \eqref{en-ex-v1-extra}.
For the first one, arguing as in \eqref{first laplacian} we find 
\begin{equation}\label{first Neumann}\begin{aligned}
  &\Bigl|\int_{-K}^{-\frac1k}\varphi_k  x\dot{u} \left[u(\mathcal N_su) \varphi_k-I(u, \varphi_k,\mathbb{R^+})(x)\right]dx\Bigr|\\
  &=k^{2s-2} \Bigl|\int_{-kK}^{-1}  \varphi  x  \dot{u}(\frac{\cdot}{k}) \left[u(\frac{\cdot}{k})  (\mathcal N_s \varphi) -I(u(\frac{\cdot}{k}), \varphi,\mathbb{R}^+)(x)\right]dx\Bigr|\\
  &\le Ck^{2s-1} \int_{-kK}^{-1}  \varphi  \Bigl| u(\frac{\cdot}{k})  (\mathcal N_s \varphi) -I(u(\frac{\cdot}{k}), \varphi,\mathbb{R}^+)(x)\Bigr|dx.
\end{aligned}\end{equation}
Thanks to Remark \ref{cut off}, and using that $u\in L^\infty (\R)$ and  $\mathcal N_s\varphi \in L^1(\R)$, we find
$$
\int_{-k K}^{-1}  \varphi  \Bigl| u(\frac{\cdot}{k})  (\mathcal N_s \varphi)\Bigr| dx
\le C\int_{-\infty}^{-1}  \varphi  \Bigl|\mathcal N_s \varphi\Bigr| dx <\infty.
$$
Again by Remark \ref{cut off}, we have
$$
\Bigl| I(u(\frac{\cdot}{k}), \varphi,\mathbb{R^+})(x)\Bigr|\le 2\|u\|_{L^\infty(\R)} \int_{\R} \frac{|\varphi(x)-\varphi(y)|}{|x-y|^{1+2s}}dy
\le \frac{C}{(1+|x|)^{1+2s}}\quad \text{for $x<0$}
$$hence \[
\int _{-kK}^{-1}\varphi\Bigl|I(u(\frac{\cdot}{k}), \varphi,\mathbb{R}^+)(x)\Bigr|dx\leq C\int_{-\infty}^{-1}\frac{1}{(1+|x|)^{1+2s}}dx<\infty.
\]Since $s<\frac12$ the above estimates imply that the first integral in \eqref{en-ex-v1-extra} vanishes as $k\to+\infty.$

For the second one we observe that \begin{align*}
 \bigg| \int_{\frac 1k}^{K}\dot \varphi_k&  x u \left[u(\mathcal N_s\varphi_k)-I(u, \varphi_k,\mathbb{R}^+)(x)\right]dx\bigg| \\&\leq k \int_{\frac 1k}^{\frac2k}\left|\dot \varphi(k \cdot)  x u \left[u(\mathcal N_s \varphi_k)-I(u, \varphi_k,\mathbb{R}^+)(x)\right]\right|dx\\
  &=k^{2s-1} \int_{1}^{2}\left|\dot \varphi  x u(\frac{\cdot}{k}) \left[u(\frac{\cdot}{k})\mathcal N_s \varphi-I(u(\frac{\cdot}{k}), \varphi,\mathbb{R}^+)(x)\right]\right|dx\\
  &\leq C\int_{-2}^{-1}|\mathcal N_s \varphi+\widetilde I(\varphi)|dx
\end{align*}that is again finite by Remark \ref{cut off}, this also implies that the second integral in \eqref{en-ex-v1-extra} goes to $0$ as $k\to+\infty.$

It then remains to show that
\begin{equation}
\label{en-ex-v1-eq17}
\int_{T\left(\mathbb{R}^{+}\right)} \frac{\left(u \varphi_k(x)-u \varphi_k(y)\right)^2}{|x-y|^{1+2 s}} d x d y \to \int_{T\left(\mathbb{R}^{+}\right)} \frac{\left(u(x)-u(y)\right)^2}{|x-y|^{1+2 s}} d x d y.  
\end{equation}
To see this, we simply note that 
\begin{align*}
  \int_{T\left(\mathbb{R}^{+}\right)}& \frac{\left((u \varphi_k)(x)-(u \varphi_k)(y)\right)^2}{|x-y|^{1+2 s}} d x d y
  \\&=\int_{T\left(\mathbb{R}^{+}\right)} \frac{\Bigl(\varphi_k(x)(u(x)-u(y)) - u(y)(\varphi_k(x)-\varphi_k(y))\Bigr)^2}{|x-y|^{1+2 s}} d x d y\\
  &= \int_{T\left(\mathbb{R}^{+}\right)} \frac{\varphi_k^2(x)(u(x)-u(y))^2 }{|x-y|^{1+2s}}dxdy+\int_{T(\R^+)}\frac{ u^2(y)(\varphi_k(x)-\varphi_k(y))^2}{|x-y|^{1+2s}}dxdy\\&\hspace{5em}+\int_{T(\R^+)}\frac{2\varphi_k(x)(u(x)-u(y)) u(y)(\varphi_k(x)-\varphi_k(y))}{|x-y|^{1+2 s}} d x d y,
\end{align*}
where, by monotone convergence,
$$
\int_{T\left(\mathbb{R}^{+}\right)} \frac{\varphi_k^2(x)(u(x)-u(y))^2}{|x-y|^{1+2 s}} d x d y \to \int_{T\left(\mathbb{R}^{+}\right)} \frac{\left(u(x)-u(y)\right)^2}{|x-y|^{1+2 s}} d x d y
$$
as $k \to +\infty$ and, by a change of variables 
\begin{align*}
  \int_{T\left(\mathbb{R}^{+}\right)} \frac{u^2(y)(\varphi_k(x)-\varphi_k(y))^2}{|x-y|^{1+2 s}} d x d y&= k^{2s-1} \int_{T\left(\mathbb{R}^{+}\right)} \frac{u^2(ky)(\varphi(x)-\varphi(y))^2}{|x-y|^{1+2 s}} d x d y\\
  &\le k^{2s-1} \int_{\R^2} \frac{u^2(ky)(\varphi(x)-\varphi(y))^2}{|x-y|^{1+2 s}} d x d y \\&\le  k^{2s-1}\|u\|_{L^\infty(\R)}
\int_{\R^2} \frac{(\varphi(x)-\varphi(y))^2}{|x-y|^{1+2 s}} d x d y\\
&\le C k^{2s-1}   \int_{\R} \frac{1}{(1+|y|)^{1+2s}} d y \le Ck^{2s-1} \to 0 
  \end{align*}as $k\to+\infty.$

Finally, we also see that, by the Cauchy-Schwarz inequality,
  \begin{align*}
    &\Bigl|\int_{T\left(\mathbb{R}^{+}\right)} \frac{\varphi_k(x)(u(x)-u(y)) u(y)(\varphi_k(x)-\varphi_k(y))}{|x-y|^{1+2 s}} d x d y\Bigr|^2\\
    &\le \Bigl(\int_{T\left(\mathbb{R}^{+}\right)} \frac{\varphi_k^2(x)(u(x)-u(y))^2}{|x-y|^{1+2 s}} d x d y\Bigr) \Bigl(\int_{T\left(\mathbb{R}^{+}\right)} \frac{u^2(y)(\varphi_k(x)-\varphi_k(y))^2}{|x-y|^{1+2 s}} d x d y\Bigr)\\
    &\to 0
  \end{align*}
as $k \to +\infty$. Hence \eqref{en-ex-v1-eq17} follows, and the proof is finished.
\end{proof}

We can now establish the following
\begin{lemma}\label{energy expansion-v2}
  Let $s \in (0,\frac{1}{2})$, and let $u \in C^1_{\mathrm {loc}}(\R \setminus \{0\}) \cap L^\infty(\R_+)$ be a function satisfying the following assumptions for some $p \in (1,\frac{1+2s}{1-2s})$ and where $\beta =\min\{1,\frac{2s}{p-1}\}$:
  \begin{enumerate}[$(i)$]
   \item $(-\Delta)^s u \in L^1(0,\infty);$
   \item $\mathcal N_s u \in L^1(-\infty,0)$;
   \item $|u(x)| \le C \min \{1,|x|^{-\frac{2s}{p-1}}\}$ for $x \in (0,+\infty)$;
   \item $|\dot u(x)| \le C \frac{\min \{1,x^{-\frac{2s}{p-1}}\}}{x}$ for $x \in (0,+\infty);$
    \item $|u(x)| \le C \min \{1,|x|^{-\beta}\}$ for $x \in (-\infty,0)$;
    \item $|\dot u(x)|\le C \frac{\min \{1,|x|^{-\beta}\}}{|x|}$ for $x \in (-\infty,0).$
   \end{enumerate}
  Then \[
    (1-2s)\frac{c_s}{2}\iint_{T(\R^+)}\frac{(u(x)-u(y))^2}{|x-y|^{1+2s}}dxdy=-2\int_0^{+\infty}(-\Delta)^su(x)x\dot u(x)dx-2\int_{-\infty}^0\mathcal N_su(x)x\dot u(x)dx.
    \]
\end{lemma}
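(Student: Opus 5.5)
We will derive Lemma \ref{energy expansion-v2} from Lemma \ref{energy expansion-v1} by cutting off at infinity. Fix $\psi\in C^\infty(\R)$ with $0\le\psi\le1$, $\psi\equiv1$ on $[-1,1]$, $\psi\equiv0$ outside $[-2,2]$ and $|\dot\psi|\le 2$. Set $\psi_R(x)=\psi(x/R)$ and $u_R=u\psi_R$. Then $u_R$ has compact support, lies in $C^1_{\mathrm{loc}}(\R\setminus\{0\})\cap L^\infty(\R)$, and satisfies $|\dot u_R(x)|\le C/|x|$, by $(iv)$, $(vi)$ and $|\dot\psi_R|\le C/R\le C'/|x|$ on $R\le|x|\le 2R$.

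To check hypotheses $(i)$ and $(ii)$ of Lemma \ref{energy expansion-v1} for $u_R$, we use the Leibniz rules \eqref{product rule laplacian} and \eqref{product rule neumann}:
\[
(-\Delta)^s u_R=\psi_R(-\Delta)^s u+u(-\Delta)^s\psi_R-I(u,\psi_R,\R),
\]
and analogously for $\mathcal N_s$. The rescaling of Remark \ref{cut off} (applied to $1-\psi$) gives
\[
\int_\R\frac{|\psi_R(x)-\psi_R(y)|}{|x-y|^{1+2s}}\,dy\le C R^{-2s}(1+|x|/R)^{-1-2s}.
\]
Hence the correction terms are integrable with $L^1$ norm $O(R^{1-2s})$ times a bounded factor. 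Combined with $(i)$, $(ii)$ and boundedness of $u$, this shows Lemma \ref{energy expansion-v1} applies and yields the identity for each $u_R$.

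Next we pass to the limit $R\to\infty$ term by term.
\begin{itemize}
\item \emph{Main terms.} The terms $\int_0^\infty\psi_R^2\,x\dot u\,(-\Delta)^s u$ and their $\mathcal N_s$ analogues converge by dominated convergence, since $|x\dot u|\le C$ and $(-\Delta)^s u\in L^1$. The terms carrying $\dot\psi_R$ contain $x\dot\psi_R\,u\,(-\Delta)^s u$, which is bounded by $C|u|\,|(-\Delta)^s u|\chi_{R<|x|<2R}\to0$.
\item \emph{Error terms.} These are
\[
\int x\,\dot{(u\psi_R)}\bigl[u(-\Delta)^s\psi_R-I(u,\psi_R,\R)\bigr],
\]
and their Neumann counterparts.
\item \emph{Energy.} $\E(u\psi_R,T(\R^+))\to\E(u,T(\R^+))$ follows exactly as at the end of the proof of Lemma \ref{energy expansion-v1}, provided $\E(u,T(\R^+))<\infty$ and
\[
\iint_{T(\R^+)} u^2(y)\frac{(\psi_R(x)-\psi_R(y))^2}{|x-y|^{1+2s}}\to0.
\]
After scaling, the latter is bounded by $R^{1-2s}\int u^2(Ry)\,(1+|y|)^{-1-2s}dy$. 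Using the decay $|u(Ry)|\le C(R|y|)^{-\beta}$ for $|y|\ge 1/R$ together with boundedness near $0$, this is of order $R^{1-2s-2\beta}$ plus a small-$y$ contribution of order $R^{-2s}$. If $\E(u,T(\R^+))=\infty$ is not excluded a priori, the identity itself (finite right-hand side) combined with Fatou's lemma gives finiteness.
\end{itemize}

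The main obstacle is the error terms. After the change of variables $x=Rz$, $\dot{(u\psi_R)}$ contributes $|x\,\dot u|\le C\min\{1,|x|^{-\beta}\}$, plus a similar term from $\dot\psi_R$. The factor $u(-\Delta)^s\psi_R-I(u,\psi_R,\R)$ is bounded by $C\sup|u|\,R^{-2s}(1+|x|/R)^{-1-2s}$ with $u$ evaluated near scale $|x|$. Integrating gives roughly
\[
R^{-2s}\int \min\{1,|x|^{-\beta}\}^2(1+|x|/R)^{-1-2s}dx\sim R^{1-2s-2\beta}\quad\text{(or }R^{-2s}\log R\text{)}.
\]
This tends to $0$ precisely when $2\beta>1-2s$. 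With $\beta=1$ this holds trivially. With $\beta=2s/(p-1)$ it is equivalent to $4s/(p-1)>1-2s$, i.e. $p<\frac{1+2s}{1-2s}$, which is exactly the hypothesis on $p$. The delicate point is to use the pointwise decay of both $u(x)$ and $u(y)$ inside $I(u,\psi_R,\cdot)$, rather than only $\|u\|_\infty$, so that the product carries two decay factors. We would split $I$ over the regions $|y|\le|x|/2$ and the rest, estimating $|u(x)-u(y)|\le|u(x)|+|u(y)|$ and using the kernel bound of Remark \ref{cut off} in each region.
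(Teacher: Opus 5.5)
Your proposal is correct and follows essentially the same route as the paper: truncate at infinity, apply Lemma~\ref{energy expansion-v1} to $u\psi_R$, and expand with the Leibniz rules. The main terms then converge by dominated convergence, and the error terms vanish by combining the scaling of the cutoff with the decay of both $u(x)$ and $u(y)$ inside $I$; the decisive condition is $2\beta>1-2s$ (i.e.\ $p<\frac{1+2s}{1-2s}$), and the energy term is handled exactly as at the end of Lemma~\ref{energy expansion-v1}.

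One precision: on the region $|y|\le |x|/2$ the smallness comes from the cutoff, not from $u(y)$. Either note that $\psi_R(x)-\psi_R(y)$ vanishes unless $|x|>R$, or, as the paper does, use the Lipschitz bound of the cutoff to gain a factor $R^{-1}$. Your Fatou argument for the finiteness of $\E(u,T(\R^+))$ and your explicit check of the hypotheses of Lemma~\ref{energy expansion-v1} are details the paper leaves implicit.
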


\begin{proof} We consider the functions $u\varphi_k$, $k \in \mathbb N$, where $\varphi_k = \varphi(\frac{\cdot}{k})$ and the function $\varphi$ is the one of Remark \ref{cut off}.
 Applying Lemma \ref{energy expansion-v1} to $u \varphi_k$, we obtain:
      \begin{align}
        (1-2 s) \frac{c_s}{2} &\int_{T\left(\mathbb{R}^{+}\right)} \frac{\left(u \varphi_k(x)-u \varphi_k(y)\right)^2}{|x-y|^{1+2 s}} d x d y
        \\&=\nonumber-2 \int_0^{+\infty}x(-\Delta)^s\left(u \varphi_k\right) \dot{\left(u \varphi_k\right)}dx -2  \int_{-\infty}^0x\mathcal N_s\left(u \varphi_k\right) \dot{\left(u \varphi_k\right)}dx \nonumber\\
&=-2 \int_{0}^{2k}x(-\Delta)^s\left(u \varphi_k\right) \dot{\left(u \varphi_k\right)}dx -2  \int_{-2k}^{0}x\mathcal N_s\left(u \varphi_k\right) \dot{\left(u \varphi_k\right)}dx. \label{en-ex-v2-eq1}          
\end{align}
      We can rewrite the first integral on the right hand side as
 \begin{align}
 &-2 \int_{0}^{2k}(-\Delta)^s\left(u \varphi_k\right) x\dot{(u \varphi_k)}dx \nonumber\\
&=-2 \int_{0}^{2k} x\varphi_k(-\Delta)^s u \dot{(u \varphi_k)}dx -2 \int_{0}^{2k} x\dot{(u {\varphi}_k)}\left[u(-\Delta)^s \varphi_k-I(u, \varphi_k,\mathbb{R})(x)\right]dx.\label{en-ex-v2-eq2}         
\end{align}
For the first integral in \eqref{en-ex-v2-eq2} we have
$$
-2 \int_{0}^{2k} x\varphi_k(-\Delta)^s u \dot{(u \varphi_k)}dx
=-2\int_{0}^{2k}x\varphi_k^2\dot u (-\Delta)^su \:dx-2\int_{k}^{2k} x u  \varphi_k \dot \varphi_k (-\Delta)^su \:dx,
$$
where, by assumptions $(i)$, $(iv)$ and Lebesgue's theorem,
\[
  \int_{0}^{2k}x\varphi_k^2 \dot u (-\Delta)^su \:dx \to 
 \int_{0}^{\infty} x \dot u (-\Delta)^su\:dx 
\]
as $k \to +\infty$. Moreover, since $|\dot \varphi_k|\le \frac{C}{k}$ on $[k,2k]$ with a constant $C>0$, we have
\[
 \Bigl| \int_{k}^{2k} x u  \varphi_k \dot \varphi_k (-\Delta)^su \:dx \Bigr| \le  2 C \int_{k}^{\infty} |u (-\Delta)^s u| \:dx \to 0
 \qquad \text{as $k \to +\infty$}
\]
since $u \in L^\infty(\R)$ and $(-\Delta)^s u \in L^1(0,\infty)$.
Hence 
\begin{equation}
\int_{0}^{2k} x\varphi_k(-\Delta)^s u \dot{(u \varphi_k)} \to   
\int_{0}^{\infty}x \dot u (-\Delta)^su \:dx \qquad \text{as $k \to +\infty$.} \label{en-ex-v2-eq3}         
\end{equation}
Next we consider the integral
\begin{align}
&\int_{0}^{2k} x\dot{(u {\varphi}_k)}\left[u(-\Delta)^s \varphi_k-I(u, \varphi_k,\mathbb{R})(x)\right]dx \label{en-ex-v2-eq4}\\
&= \int_{0}^{2k}x \varphi_k  \dot{u} \left[u(-\Delta)^s \varphi_k-I(u, \varphi_k,\mathbb{R})(x)\right]dx + \int_{0}^{2k}x \dot \varphi_k   u \left[u(-\Delta)^s \varphi_k-I(u, \varphi_k,\mathbb{R})(x)\right]dx \nonumber
\end{align}
We now use the fact that
\begin{equation}
  \label{en-ex-v2-eq4-1}
(-\Delta)^s \varphi_k  = k^{-2s} [(-\Delta)^s \varphi](\frac{\cdot}{k})
\end{equation}
to see that, by a change of variables,
\begin{align}
  \Bigl|&\int_{0}^{2k}  x\varphi_k \dot{u} u(-\Delta)^s \varphi_k\,dx \Bigr| \le \|(-\Delta)^s \varphi_k\|_{L^\infty(0,2k)} \int_{0}^{2k}  x | \dot{u} u|\,dx \nonumber\\
  &\le k^{-2s} \|(-\Delta)^s \varphi\|_{L^\infty(0,2)} \Bigl( \int_{0}^{1}   x | \dot{u} u|\,dx + \int_{1}^{2k} x | \dot{u} u|\,dx\Bigr)  \nonumber\\
  &\le C k^{-2s}  \Bigl( \int_{0}^{1} 1 \,dx + \int_{1}^{2k} x^{-\frac{4s}{p-1}}\,dx\Bigr)\\&\le C k^{-2s}  \Bigl(1 + k^{1-\frac{4s}{p-1}}\Bigr)  \nonumber \\
  &= C \Bigl(k^{-2s} + k^{1-2s\bigl(1+\frac{2}{p-1}\bigr)}\Bigr) \to 0 \qquad \text{as $k \to +\infty$,}\label{en-ex-v2-eq5} 
\end{align}
where we used assumption $(iii)$-$(iv)$ and  that $2s\bigl(1+\frac{2}{p-1}\bigr)>1$ since $p < \frac{1+2s}{1-2s}$ in the last step.

We also see that
\begin{align}
 \Bigl|\int_{0}^{2k}  x\varphi_k \dot{u} I(u, \varphi_k,\mathbb{R})(x)\,dx \Bigr| &\le \int_{0}^{2k}\min\{1,x^{-\frac{2s}{p-1}}\}|I(u, \varphi_k,\mathbb{R})(x)|dx\nonumber \\
&\le \int_{0}^{1}|I(u, \varphi_k,\mathbb{R})(x)|dx + \int_{1}^{2k} x^{-\frac{2s}{p-1}}|I(u, \varphi_k,\mathbb{R})(x)|dx,\label{en-ex-v2-eq6}
\end{align}
where
$$\begin{aligned}
|I(u, \varphi_k,\mathbb{R})(x)| &\le 2\|u\|_{L^\infty(\R)} \int_{\R}\frac{|\varphi(\frac{x}{k})-\varphi(\frac{y}{k})|}{|x-y|^{1+2s}}dy \\&= 2\|u\|_{L^\infty(\R)}k^{-2s} \int_{\R}\frac{|\varphi(\frac{x}{k})-\varphi(z)|}{|\frac{x}{k}-z|^{1+2s}}dz\\& \le C k^{-2s}\end{aligned}
$$
for $x \in (0,1)$ and therefore
$$
\int_{0}^{1}|I(u, \varphi_k,\mathbb{R})(x)|dx \to 0.
$$
Now we estimate, for $|x|\geq 1$, $I(u,\varphi_k,\R)(x)$.
By assumption $(iii)$, $(v)$ and a change of variables, 
\begin{align}
&\frac{1}{c_{N,s}}|I(u, \varphi_k,\mathbb{R})(x)|\le \nonumber\\  
 & \le \int_{\R} \frac{|u(x)-u(y)||\varphi_k(x)-\varphi_k(y)|}{|x-y|^{1+2 s}}dy
  \le C \int_{\R} \frac{(x^{-\beta}+\min \{1, |y|^{-\beta}\})|\varphi_k(x)-\varphi_k(y)|}{|x-y|^{1+2 s}}dy \nonumber\\
&= C k^{-2s} \int_{\R} \frac{(|x|^{-\beta}+\min \{1,|ky|^{-\beta}\})|\varphi(\frac{x}{k})-\varphi(y)|}{|\frac{x}{k}-y|^{1+2 s}}dy \nonumber \\
&= C k^{-2s} \int_{\{|y|\ge \frac{|x|}{k}\}} \frac{|x|^{-\beta}|\varphi(\frac{x}{k})-\varphi(y)|}{|\frac{x}{k}-y|^{1+2 s}}dy +C k^{-2s} \int_{\{|y|\le \frac{|x|}{k}\}} \frac{\min\{1,|ky|^{-\beta}\}|\varphi(\frac{x}{k})-\varphi(y)|}{|\frac{x}{k}-y|^{1+2 s}}dy  \nonumber \\ 
&\le C k^{-2s} |x|^{-\beta} \int_{\{|y|\ge \frac{|x|}{k}\}} \frac{|\varphi(\frac{x}{k})-\varphi(y)|}{|\frac{x}{k}-y|^{1+2 s}}dy +C k^{-2s} \int_{\{|y|\le \frac{|x|}{k}\}} \frac{\min\{1,|ky|^{-\beta}\}}{|\frac{x}{k}-y|^{2 s}}dy \nonumber \\ 
&\le C k^{-2s} |x|^{-\beta} \int_{\R} \frac{|\varphi(\frac{|x|}{k})-\varphi(y)|}{|\frac{x}{k}-y|^{1+2 s}}dy +C \int_{\{|y|\le \frac{|x|}{k}\}} \frac{\min\{1,|ky|^{-\beta}\}}{|x-k y|^{2 s}}dy \nonumber \\ 
&\le C k^{-2s} |x|^{-\beta}  +C k^{-1}\int_{\{|z|\le |x|\}} \frac{\min\{1,|z|^{-\beta}\}}{|x-z|^{2 s}}dz \nonumber \\ 
&= C k^{-2s} |x|^{-\beta}  +C k^{-1}\int_{\{|z|\le 1\}} |x-z|^{-2 s}dz +C k^{-1}\int_{\{1 \le |z|\le |x|\}} \frac{|z|^{-\beta}}{|x-z|^{2 s}}dz \nonumber \\ 
&\le C k^{-2s} |x|^{-\beta}  +C k^{-1}x^{-2s} +C k^{-1}|x|^{1-2s-\beta} \int_{\{\frac{1}{|x|} \le |t|\le 1\}} \frac{|t|^{-\beta}}{|1-t|^{2 s}}dz \label{special beta=1}.\end{align}
Now we distinguish between $\beta<1$ and $\beta=1$. If $\beta<1$ we can estimate the integral in \eqref{special beta=1} as \[
\int_{\{\frac{1}{|x|} \le |t|\le 1\}} \frac{|t|^{-\beta}}{|1-t|^{2 s}}dz\leq C\max\{1,|x|^{\beta-1}\}.
\]Consequently we have
\begin{align}  \\ 
\frac{1}{c_{N,s}}|I(u, \varphi_k,\mathbb{R})(x)|&\le C k^{-2s} |x|^{-\beta}  +C k^{-1}|x|^{-2s} +C k^{-1}|x|^{1-2s-\beta} \max \{1, |x|^{\beta-1}\} \nonumber\\ 
&\le C k^{-2s} |x|^{-\beta}  +C k^{-1}|x|^{-2s}\Bigl(1+ \max\{1,|x|^{1-\beta}\}\Bigr) \nonumber\\
&\le C \Bigl(k^{-2s} |x|^{-\beta}  + k^{-1}\Bigl(|x|^{-2s} +|x|^{1-2s-\beta}\Bigr)\bigr) \label{en-ex-v2-eq6-extra}.
\end{align}
Therefore,
\begin{align*}
  \int_{1}^{2k} x^{-\frac{2s}{p-1}}|I(u, \varphi_k,\mathbb{R})(x)|dx &\le  C \int_{1}^{2k} \Bigl(k^{-2s} x^{-\frac{2s}{p-1}-\beta}  + k^{-1}\Bigl(x^{-2s-\frac{2s}{p-1}} +x^{1-2s-\frac{2s}{p-1}-\beta}\Bigr)dx \\
  &\le C \Bigl(k^{-2s+1-\frac{2s}{p-1}-\beta}+ k^{-2s} +k^{-2s-\frac{2s}{p-1}}+k^{-1} \Bigr) \to 0
\end{align*}
as $k\to+\infty,$ using again the $2s\bigl(1+\frac{2}{p-1}\bigr)>1$ since $p < \frac{1+2s}{1-2s}$.

Now we deal with the case $\beta=1,$ starting from \eqref{special beta=1} we reach \[
\frac{1}{c_{N,s}}|I(u, \varphi_k,\mathbb{R})(x)|\le C k^{-2s} |x|^{-1}  +C k^{-1}|x|^{-2s} +C k^{-1}|x|^{-2s} \int_{\{\frac{1}{|x|} \le |t|\le 1\}} \frac{|t|^{-1}}{|1-t|^{2 s}}dz,
\]this time, integral on the right-hand side can be estimated as \[
\int_{\{\frac{1}{x} \le |t|\le 1\}} \frac{|t|^{-1}}{|1-t|^{2 s}}dt\leq C\max\{1,\log|x|\}
\]and hence we find \begin{align}\nonumber
\frac{1}{c_{N,s}}|I(u, \varphi_k,\mathbb{R})(x)|&\le C k^{-2s} x^{-1}  +C k^{-1}x^{-2s} +C k^{-1}x^{-2s}\max\{1,\log(|x|)\}\\
&\leq C(k^{-2s}x^{-1}+k^{-1}x^{-2s}\max\{1,\log(|x|)\}).\label{estimate beta=1}
\end{align}

Hence, we obtain \[\begin{aligned}
\int_{1}^{2k} x^{-\frac{2s}{p-1}}|I(u, \varphi_k,\mathbb{R})(x)|dx &\le C\bigg(\int_{1}^{2k}k^{-2s}x^{-\frac{2s}{p-1}-1}+k^{-1}x^{-\frac{2s}{p-1}-2s}\max\{1,\log(|x|)\}\bigg).\\
&\leq C (k^{-2s-\frac{2s}{p-1}}+k^{-2s}+k^{-\frac{2s}{p-1}-2s}\log(k)+k^{-1})\to 0
\end{aligned}
\]as $k\to+\infty.$

Inserting these estimates in \eqref{en-ex-v2-eq6} yields that 
\begin{equation}
\label{en-ex-v2-eq7}  
\int_{0}^{2k}  x\varphi_k \dot{u} I(u, \varphi_k,\mathbb{R})(x)\,dx \to 0 \qquad \text{as $k \to+ \infty$.}
\end{equation}
Combining \eqref{en-ex-v2-eq5} and \eqref{en-ex-v2-eq7}, we have proved that
\begin{equation}
  \label{en-ex-v2-eq8}
\int_{0}^{2k}x \varphi_k  \dot{u} \left[u(-\Delta)^s \varphi_k-I(u, \varphi_k,\mathbb{R})(x)\right]dx \to 0 \qquad \text{as $k \to+ \infty$.}
\end{equation}
Next, to estimate the integral
$$
\int_{0}^{2k} x \dot \varphi_k   u \left[u(-\Delta)^s \varphi_k-I(u, \varphi_k,\mathbb{R})(x)\right]dx = \int_{k}^{2k} x \dot \varphi_k   u \left[u(-\Delta)^s \varphi_k-I(u, \varphi_k,\mathbb{R})(x)\right]dx  
$$
we use again \eqref{en-ex-v2-eq4-1} to see that 
\begin{align*}
  \Bigl|\int_{k}^{2k}  x \dot \varphi_k u^2 (-\Delta)^s \varphi_k\,dx \Bigr| &\le \|(-\Delta)^s \varphi_k\|_{L^\infty(0,2k)} \int_{k}^{2k}  x | \dot{\varphi_k} u^2|\,dx \\
  &\le k^{-1-2s} \|(-\Delta)^s \varphi\|_{L^\infty(0,2)} \|\dot \varphi \|_{L^\infty(0,2)} \Bigl( \int_{k}^{2k}x u^2dx\Bigr)  \\
  &\le C k^{-1-2s}  \Bigl( \int_{k}^{2k} x^{1-\frac{4s}{p-1}} \,dx\Bigr)\\& \le C k^{1-2s\bigl(1+\frac{2}{p-1}\bigr)} \to 0 \qquad \text{as $k \to+ \infty$,}
\end{align*}
where we used again the fact that $2s\bigl(1+\frac{2}{p-1}\bigr)>1$ since $p < \frac{1+2s}{1-2s}$. We also see that, if $\beta>1$, by using \eqref{en-ex-v2-eq6-extra}, 
\begin{align*}
  \Bigl|&\int_{k}^{2k}  x \dot \varphi_k u I(u, \varphi_k,\mathbb{R})(x) \Bigr| \le 
   C k^{-1} \int_{k}^{2k} x^{1-\frac{2s}{p-1}}|I(u, \varphi_k,\mathbb{R})(x)|\,dx\\
   &\le C k^{-1} \int_{k}^{2k} x^{1-\frac{2s}{p-1}} \Bigl(k^{-2s}x^{-\beta} +  k^{-1}x^{-2s}+k^{-1} x^{1-2s-\beta}\Bigr)\,dx\\  
   &= C k^{-1-2s} \int_{k}^{2k} x^{1-\frac{2s}{p-1}-\beta}dx +Ck^{-1-2s}+k^{-2 }\int_{k}^{2k}  x^{2-2s-\frac{2s}{p-1}-\beta}\,dx\\  
  &\le  Ck^{-1-2s} + C k^{1-2s-\frac{2s}{p-1}-\beta} \to 0 \qquad \text{as $k \to +\infty$,}
  \end{align*}
recalling that, in this case, $\beta=\frac{2s}{p-1}$ and  $p<\frac{1+2s}{1-2s}$.
If $\beta=1$, we can argue in a similar way and use \eqref{estimate beta=1} to obtain\begin{align*}
  \Bigl|\int_{k}^{2k}  x \dot \varphi_k u I(u, \varphi_k,\mathbb{R})(x) \Bigr| &\le 
   C k^{-1} \int_{k}^{2k} x^{1-\frac{2s}{p-1}}|I(u, \varphi_k,\mathbb{R})(x)|\,dx\\
   &\le C k^{-1} \int_{k}^{2k} x^{1-\frac{2s}{p-1}} (k^{-2s}x^{-1}+Ck^{-1}x^{-2s}\log(|x|))dx\\  
   &\leq C(k^{-\frac{2s}{p-1}}+k^{-2s-1}\log(k))\to0\quad\text{as $k\to+\infty$.}
  \end{align*}
Combining these two estimates, we find that
\begin{equation}
  \label{en-ex-v2-eq9}
\int_{0}^{2k} x\dot{(u {\varphi}_k)}\left[u(-\Delta)^s \varphi_k-I(u, \varphi_k,\mathbb{R})(x)\right]dx \to 0 \qquad \text{as $k \to+ \infty$,}
\end{equation}both if $\beta=1$ or $\beta\in(0,1).$

From \eqref{en-ex-v2-eq2}, \eqref{en-ex-v2-eq3}, \eqref{en-ex-v2-eq4}, \eqref{en-ex-v2-eq8}, and \eqref{en-ex-v2-eq9}, we deduce that 
\begin{equation}
  \label{en-ex-v2-eq10}
-2 \int_{0}^{2k}(-\Delta)^s\left(u \varphi_k\right) x\dot{(u \varphi_k)}dx \to -2  \int_{0}^{\infty} x \dot u (-\Delta)^su\:dx
\qquad \text{as $k \to+ \infty$.}
\end{equation}
Next we consider the second integral in \eqref{en-ex-v2-eq1} given by 
\begin{align}\label{en-ex-v2-eq11}
 &\int_{-2k}^{0}x\mathcal N_s\left(u \varphi_k\right) \dot{\left(u \varphi_k\right)}dx \\
  &=\int_{-2k}^{0}x\mathcal N_s\left(u \varphi_k\right) \dot u \varphi_k dx + \int_{-2k}^{-k}x\mathcal N_s\left(u \varphi_k\right)u \dot \varphi_k dx \nonumber\\
  &=\int_{-2k}^{0} x \dot u   \varphi_k^2 (\mathcal N_s u)dx  + \int_{-2k}^{-k}x u (\mathcal N_s u) \varphi_k  \dot \varphi_k dx \nonumber\\ 
&\hspace{1em}+ \int_{-2k}^{0}x  \Bigl(u\mathcal N_s \varphi_k -I(u,\varphi_k,\R_+)\Bigr) \dot u \varphi_k dx + \int_{-2k}^{-k}x \Bigl(u\mathcal N_s \varphi_k -I(u,\varphi_k,\R_+)\Bigr)u \dot \varphi_k dx.\nonumber
\end{align}
Again, we see, by Lebesgue's theorem, using the facts that $\mathcal N_s u \in L^1(-\infty,0)$ and that $x \mapsto x \dot u$ is bounded on $(-\infty,0)$ by assumption,  we have that
\begin{equation}
\label{en-ex-v2-eq11-1}
\int_{-2k}^{0} x \dot u   \varphi_k^2 (\mathcal N_s u)dx \to \int_{-\infty}^{0} (\mathcal N_s u) x  \dot u dx \qquad \text{as $k \to +\infty$.}
\end{equation}
Moreover, since
$$
|u| \le C, \quad |\varphi_k| \le C \quad \text{and}\quad |\dot \varphi_k|\le \frac{C}{k} \qquad \text{on $[-2k,-k]$}
$$
with a constant $C>0$, we have 
\begin{equation}
\label{en-ex-v2-eq12}
\Bigl|\int_{-2k}^{-k}x(\mathcal N_s u) \varphi_k u \dot \varphi_k dx\Bigr| \le C \int_{-2k}^{-k} |\mathcal N_s u| dx \to 0
\end{equation}
as $k \to +\infty$. 

Now we wish to show that the last two integral in \eqref{en-ex-v2-eq11} vanish as $k\to+\infty$. We observe that
\begin{equation}
\label{en-ex-v2-eq13}
\mathcal N_s \varphi_k  = k^{-2s} [\mathcal N_s \varphi](\frac{\cdot}{k})
\end{equation}
and thanks to assumption $(v)$ we estimate
\begin{align*}
  \Bigl|\int_{-2k}^{-k}x \left(u\mathcal N_s \varphi_k \right)u \dot \varphi_k dx \Bigr|
  &\le C k^{-\beta}\int_{-2k}^{-k}| u\mathcal N_s \varphi_k |dx\\
  &= C k^{-1-\beta}\int_{-2k}^{-k}| u [\mathcal N_s \varphi](\frac{\cdot}{k}) |dx\\
  &= C k^{-1-\beta-2s}\int_{-2}^{-1} | u(k \cdot) \mathcal N_s \varphi| dx   \\
  &\leq C k^{-1-\beta-2s}\|u\|_{L^\infty(\R)}\|\mathcal N_s\varphi\|_{L^1(\R^-)}\to 0 \quad\text{as $k\to+\infty$}
,\end{align*}since $u$ is bounded and, by Remark \ref{cut off}, $\mathcal N_s \varphi \in L^1(\R^-).$
For the other term of the second integral in the last line of \eqref{en-ex-v2-eq11} we have, again by assumption $(v)$\begin{align}\label{integral R^- with beta}
   \bigg| \int_{-2k}^{-k}xu\dot\varphi_kI(u,\varphi_k,\R_+)dx\bigg|&\leq C\int_{-2k}^{-k}\min\{1,|x|^{-\beta}\}|I(u,\varphi_k,\R_+)|dx\\
   &= C\int_{-2k}^{-k}|x|^{-\beta}|I(u,\varphi_k,\R^+)|dx
\end{align}
if $k$ is big enough. By the estimate of \eqref{en-ex-v2-eq6-extra} we find, if $\beta<1$ and for $x\leq -1$, \[
|I(u,\varphi_k,\R^+)(x)|\leq C(k^{-2s}|x|^{-\beta}+k^{-1}(|x|^{-2s}+|x|^{1-2s-\beta}))
\]that combined with \eqref{integral R^- with beta} implies \begin{align*}
\bigg|\int_{-2k}^{-k}x u\dot{\varphi_k}I(u,\varphi_k,\R^+)dx\bigg|&\leq C\int_{-2k}^{-k}k^{-2s}|x|^{-2\beta}+k^{-1}|x|^{-2s-\beta}+k^{-1}|x|^{1-2s-2\beta}dx\\
&\leq C\left(k^{1-2s-2\beta}+k^{-2s-\beta}\right)
\end{align*}that vanishes as $k\to+\infty$ provided that $p<\frac{1+2s}{1-2s}.$ The case $\beta=1$ can be treated similarly using \eqref{estimate beta=1} instead of \eqref{en-ex-v2-eq6-extra}.
Consequently,
\begin{equation}
\label{en-ex-v2-eq14}
\int_{-2k}^{-k}x \Bigl(u\mathcal N_s \varphi_k -I(u,\varphi_k,\R_+)\Bigr)u \dot \varphi_k dx \to 0 \qquad \text{as $k \to +\infty$.}
\end{equation}
Moreover, we have
\begin{align*}
  &\Bigl|\int_{-2k}^{0}x  \Bigl(u\mathcal N_s \varphi_k -I(u,\varphi_k,\R_+)\Bigr) \dot u \varphi_k dx\Bigr|\\
  &\le C \int_{-2k}^0 \min\{1,|x|^{-\beta}\}\Bigl | u\mathcal N_s \varphi_k -I(u,\varphi_k,\R_+)\Bigr|\,dx\\   
  &\le Ck^{-2s} \int_{-2k}^0 \min\{1,|x|^{-\beta}\}|u [\mathcal N_s \varphi](\frac{\cdot}{k})|\,dx
   + C\int_{-2k}^0 \min\{1,|x|^{-\beta}\}|I(u,\varphi_k,\R_+)|dx \\
  &\le C k^{-2s} \|\mathcal N_s \varphi \|_{L^\infty(-\infty,0)} \int_{-2k}^0 \min\{1,|x|^{-2\beta}\}\,dx
    + C\int_{-2k}^0 \min\{1,|x|^{-\beta}\}|I(u,\varphi_k,\R_+)|dx,
\end{align*}
where
$$\begin{aligned}
k^{-2s} \|\mathcal N_s \varphi \|_{L^\infty(-\infty,0)} \int_{-2k}^0 \min\{1,|x|^{-2\beta}\}\,dx 
 &\le C k^{-2s}\Bigl(1+ k^{1-2\beta}\Bigr)\\&= C\Bigl(k^{-2s} + k^{1-2s-2\beta}\Bigr) \to 0,\end{aligned}
$$
using again that $2s\bigl(1+\frac{2}{p-1}\bigr)>1$ since $p < \frac{1+2s}{1-2s}$ and that $\mathcal N_s\varphi \in L^\infty(-\infty,0)$ by Remark \ref{cut off}. 

We also see that
$$
\int_{-2k}^0 \min\{1,|x|^{-\beta}\}|I(u,\varphi_k,\R_+)|dx \le \int_{-1}^{0}|I(u, \varphi_k,\mathbb{R})|dx + \int_{-2k}^{-1} |x|^{-\beta}|I(u, \varphi_k,\mathbb{R}_+)|dx,
$$
where for $x \in (-1,0)$ we have
$$
\begin{aligned}
|I(u, \varphi_k,\mathbb{R}_+)(x)| &\le 2\|u\|_{L^\infty(\R)} \int_{\R_+}\frac{|\varphi(\frac{x}{k})-\varphi(\frac{y}{k})|}{|x-y|^{1+2s}}dy \\&= 2\|u\|_{L^\infty(\R)}k^{-2s} \int_{\R_+}\frac{|\varphi(\frac{x}{k})-\varphi(z)|}{|\frac{x}{k}-z|^{1+2s}}dz \\&\le C k^{-2s}\end{aligned}
$$
and therefore
$$
\int_{-1}^{0}|I(u, \varphi_k,\mathbb{R})(x)|dx \to 0.
$$
By equation \eqref{en-ex-v2-eq6-extra}, we have if $\beta\in (0,1)$ 
\begin{align*}
  \int_{-2k}^{-1} |x|^{-\beta}|I(u, \varphi_k,\mathbb{R})(x)|dx &\le  C \int_{-2k}^{-1}
\Bigl(k^{-2s} |x|^{-2\beta}  + k^{-1}\Bigl( |x|^{-2s-\beta}+ |x|^{1-2s-2\beta}\Bigr)\Bigr)dx \\
  &\le C \Bigl(k^{-2s+1-2\beta}+ k^{-2s} +k^{-2s-\beta}+k^{-1} \Bigr) \to 0 \quad \text{as $k \to +\infty$,}
\end{align*}
using again the $2s\bigl(1+\frac{2}{p-1}\bigr)>1$ since $p < \frac{1+2s}{1-2s}$. Again if $\beta=1$, with the very same computations and using \eqref{estimate beta=1} instead of \eqref{en-ex-v2-eq6-extra}, we see also that \[
 \int_{-2k}^{-1} |x|^{-1}|I(u, \varphi_k,\mathbb{R})(x)|dx\to0\quad\text{as $k\to+\infty$}
.\]
From the estimates above, we infer that
\begin{equation}
\label{en-ex-v2-eq15}
\int_{0}^{2k}  x\varphi_k \dot{u} I(u, \varphi_k,\mathbb{R})(x)\,dx \to 0 \qquad \text{as $k \to +\infty$.}
\end{equation}
Combining \eqref{en-ex-v2-eq11}, \eqref{en-ex-v2-eq11-1}, \eqref{en-ex-v2-eq12}, \eqref{en-ex-v2-eq14} and \eqref{en-ex-v2-eq15}, 
we conclude that 
\begin{equation}
\label{en-ex-v2-eq16}
\int_{-2k}^{0}x\mathcal N_s\left(u \varphi_k\right) \dot{\left(u \varphi_k\right)}dx \to
\int_{-\infty}^{0} x  \dot{u} \mathcal N_su dx \qquad \text{as $k \to+ \infty$.}
\end{equation}
In view of \eqref{en-ex-v2-eq1}, \eqref{en-ex-v2-eq10} and \eqref{en-ex-v2-eq16}, it remains to show that
\begin{equation}
\label{en-ex-v2-eq17}
\int_{T\left(\mathbb{R}^{+}\right)} \frac{\left(u \varphi_k(x)-u \varphi_k(y)\right)^2}{|x-y|^{1+2 s}} d x d y \to \int_{T\left(\mathbb{R}^{+}\right)} \frac{\left(u(x)-u(y)\right)^2}{|x-y|^{1+2 s}} d x d y.  
\end{equation}
To see this, we simply note that 
\begin{align*}
  \int_{T\left(\mathbb{R}^{+}\right)} &\frac{\left((u \varphi_k)(x)-(u \varphi_k)(y)\right)^2}{|x-y|^{1+2 s}} d x d y
  \\&=\int_{T\left(\mathbb{R}^{+}\right)} \frac{\Bigl(\varphi_k(x)(u(x)-u(y)) - u(y)(\varphi_k(x)-\varphi_k(y))\Bigr)^2}{|x-y|^{1+2 s}} d x d y\\
  &= \int_{T\left(\mathbb{R}^{+}\right)} \frac{\varphi_k^2(x)(u(x)-u(y))^2}{|x-y|^{1+2s}}dxdy+\int_{T(\R^+)}\frac{u^2(y)(\varphi_k(x)-\varphi_k(y))^2}{|x-y|^{1+2s}}dxdy\\&\hspace{5em}+\int_{T(\R^+)}\frac{ 2\varphi_k(x)(u(x)-u(y)) u(y)(\varphi_k(x)-\varphi_k(y))}{|x-y|^{1+2 s}} d x d y,
\end{align*}
where, by monotone convergence,
$$
\int_{T\left(\mathbb{R}^{+}\right)} \frac{\varphi_k^2(x)(u(x)-u(y))^2}{|x-y|^{1+2 s}} d x d y \to \int_{T\left(\mathbb{R}^{+}\right)} \frac{\left(u(x)-u(y)\right)^2}{|x-y|^{1+2 s}} d x d y
$$
as $k \to +\infty$.

Moreover, by a change of variables and using Remark \ref{cut off}, we have 
\begin{align*}
  \int_{T\left(\mathbb{R}^{+}\right)} &\frac{u^2(y)(\varphi_k(x)-\varphi_k(y))^2}{|x-y|^{1+2 s}} d x d y= k^{1-2s} \int_{T\left(\mathbb{R}^{+}\right)} \frac{u^2(ky)(\varphi(x)-\varphi(y))^2}{|x-y|^{1+2 s}} d x d y\\
  &\le k^{1-2s} \int_{\R^2} \frac{u^2(ky)(\varphi(x)-\varphi(y))^2}{|x-y|^{1+2 s}} d x d y  \\&\le C k^{1-2s} \int_{\R} \frac{u^2(ky)}{(1+|y|)^{1+2s}} d y
   \\&\le C k^{1-2s} \int_{\R} \frac{\min \{1,|ky|^{-2\beta}\} }{(1+|y|)^{1+2s}} d y \\&= C k^{1-2s}\Bigl( \int_{\{|y|\le \frac{1}{k}\}} \frac{dy}{(1+|y|)^{1+2s}} + \int_{\{|y|> \frac{1}{k}\}} \frac{|ky|^{-2\beta}dy}{(1+|y|)^{1+2s}} \Bigr)\\
  &= C \Bigl(k^{-2s}  + k^{1-2s-2\beta}\int_{\{|y|> \frac{1}{k}\}}\frac{|y|^{-2\beta}}{(1+|y|)^{1+2s}} d y\Bigr)\\& \le C \Bigl(k^{-2s}  + k^{1-2s-2\beta}(1+ k^{2\beta-1})\Bigr)\to 0 \qquad \text{as $k \to +\infty$,}  
  \end{align*}
recalling that $\beta=\min\{1,\frac{2s}{p-1}\}$ and  $1<p<\frac{1+2s}{1-2s}.$
  
  Finally, we also see that, by the Cauchy-Schwarz inequality,
  \begin{align*}
    &\Bigl|\int_{T\left(\mathbb{R}^{+}\right)} \frac{\varphi_k(x)(u(x)-u(y)) u(y)(\varphi_k(x)-\varphi_k(y))}{|x-y|^{1+2 s}} d x d y\Bigr|^2\\
    &\le \Bigl(\int_{T\left(\mathbb{R}^{+}\right)} \frac{\varphi_k^2(x)(u(x)-u(y))^2}{|x-y|^{1+2 s}} d x d y\Bigr) \Bigl(\int_{T\left(\mathbb{R}^{+}\right)} \frac{u^2(y)(\varphi_k(x)-\varphi_k(y))^2}{|x-y|^{1+2 s}} d x d y\Bigr)\to 0
  \end{align*}
as $k \to +\infty$. Hence \eqref{en-ex-v2-eq17} follows, and the proof is finished.
\end{proof}
From the previous result we immediately obtain our Pohozaev identity.
\begin{proof}[Proof of Theorem \ref{Pohozaev}]
The proof just follows by combining the decay estimates of Section \ref{sec 3 lane emden} and Lemma \ref{energy expansion-v2}.
    \end{proof}
Finally, we can give the proof of our Liouville-type result.
\begin{proof}[Proof of Theorem \ref{non exitence result}]
By Corollary~\ref{decay2}, $u$ satisfies the assumptions of Lemma~\ref{energy expansion-v2}. Consequently,
\begin{align*}
(1-2s)\int_{0}^{+\infty}u^{p+1}\,dx &=  (1-2s)\frac{c_s}{2}\iint_{T(\R^+)}\frac{(u(x)-u(y))^2}{|x-y|^{1+2s}}dxdy\\&=
                           -2\int_0^{+\infty}(-\Delta)^su(x)x\dot u(x)dx
                             =-2  \int_{0}^{+\infty}x u^p \dot u \,dx\\& = -\frac{2}{p+1}  \int_{0}^{+\infty}x \partial_x[u^{p+1}]\,dx
                             = \frac{2}{p+1}  \int_{0}^{+\infty}u^{p+1}\,dx.
\end{align*}
Suppose by contradiction that $u \not \equiv 0$, it follows that $(1-2s)=\frac{2}{p+1}$, hence
$$
p = \frac{2}{1-2s}-1= \frac{1+2 s}{1-2 s},
$$
contrary to the assumption. The proof is thus finished.
\end{proof}
\begin{ack}
E.C. and M.T. are members of the Gruppo Nazionale per l’Analisi Matematica, la Probabilità e le loro Applicazioni (GNAMPA) of the Istituto Nazionale di Alta Matematica (INdAM). E.C. and M.T. were partially funded by the INdAM–GNAMPA project "Problemi di ottimizzazione di forma in contesti anisotropi e non-locali
", CUP: E53C25002010001, and by the PRIN project 2022R537CS "NO$^3$--Nodal Optimization, NOnlinear elliptic equations, NOnlocal geometric problems, with a focus on regularity", CUP: J53D23003850006. Part of the project has
been finalized during a three-month stay of M.T. at the Institute f\"ur Mathematics of the Goethe-Universit\"at Frankfurt. The hosting institution is kindly acknowledged.
\end{ack}
\bibliographystyle{abbrv}
\bibliography{biblio}
\end{document}